\documentclass[a4paper,11pt,reqno]{amsart}

\usepackage{amsfonts,amsthm,amssymb,amsmath,amscd}

\usepackage[utf8]{inputenc}
\usepackage[T1]{fontenc}
\usepackage[active]{srcltx}
\usepackage{multicol}
\usepackage[bookmarks=true,hyperindex,pdftex,colorlinks,citecolor=blue]{hyperref}

\usepackage[pdftex]{color}

\usepackage{tcolorbox}
\usepackage[english]{babel}
\usepackage{xcolor}
%\usepackage{cancel}
%\usepackage{graphicx}
%\usepackage{float}
%\usepackage{times}

%\addtolength{\textwidth}{1cm}  
%\addtolength{\hoffset}{-0.5cm}
%\usepackage[pdftex]{graphicx}

\newtheorem{Theo}{Theorem}[section]
\newtheorem{Prop}[Theo]{Proposition}

\newtheorem{Lemm}[Theo]{Lemma}

 \theoremstyle{definition}
\newtheorem{Defi}[Theo]{Definition}

\numberwithin{equation}{section}

%%%COMMANDS

\newcommand{\Sim}{\operatorname{Sym}}

\newcommand{\conv}{\operatorname{conv}}
\newcommand{\aconv}{\operatorname{aconv}}

\newcommand{\cconv}{\overline{\mathrm{conv}}}

\newcommand{\Real}{\operatorname{Re}}

\newcommand{\SCD}{\operatorname{SCD}}

\newcommand{\spn}{\operatorname{span}}

\def\N{\mathbb{ N}}
\def\R{\mathbb{ R}}
\def\C{\mathbb{C}}
\def\T{\mathbb{T}}

\newcommand{\eps}{\varepsilon}

\begin{document}

\title{Operations with slicely countably determined sets}

\author[V.~Kadets]{Vladimir Kadets}
\address[Kadets]{\newline  School of Mathematics and
  Informatics,\newline 
  V.~N.~Karazin Kharkiv National University,\newline  
pl.~Svobody~4, 61022 Kharkiv, Ukraine.\newline
\href{http://orcid.org/0000-0002-5606-2679}{ORCID:
  \texttt{0000-0002-5606-2679}}} \email{v.kateds@karazin.ua}

\author[A.~P\'erez]{Antonio P\'erez}
\address[P\'erez]{\newline Departamento de Matem\'{a}ticas,\newline
  Universidad de Murcia,\newline 
30100 Espinardo (Murcia), Spain.\newline
\href{http://orcid.org/0000-0001-8600-7083}{ORCID:
  \texttt{0000-0001-8600-7083} }} \email{antonio.perez7@um.es} 

 \author[D.~Werner]{Dirk Werner}
\address[Werner]{\newline Department of Mathematics,\newline  Freie
  Universit\"at Berlin, \newline 
Arnimallee~6, \,  D-14\,195~Berlin, Germany.\newline
\href{http://orcid.org/0000-0003-0386-9652}{ORCID:
  \texttt{0000-0003-0386-9652} }} 
\email{werner@math.fu-berlin.de}

\thanks{The research of the first author is done in the framework of the
  Ukrainian Ministry of Science and Education Research Program
  0115U000481, and it was partially done during his stay in Murcia
  under the support of MINECO/FEDER project MTM2014-57838-C2-1-P,
  partially during his visits to the University of Granada which were
  supported by the Spanish MINECO/FEDER project MTM2015-65020-P, and
  partially during his visit to Freie Universit\"at Berlin in the
  framework of a grant from the {\it Alexander-von-Humboldt
    Stiftung}. The second author was partially supported by the
  MINECO/FEDER project MTM2014-57838-C2-1-P and a Ph.D.\ fellowship of
  ``La Caixa Foundation''.}

\maketitle

{\centering  Dedicated to the memory of Pawe\l\ Doma\'nski
\par}

\begin{abstract}
The notion of slicely countably determined (SCD) sets was introduced
in 2010 by A.~Avil\'{e}s, V.~Kadets, M.~Mart\'{i}n,
J.~Mer\'{i} and V.~Shepelska.
We solve in the negative some natural
questions about preserving being SCD  by the operations of union,  intersection
and Minkowski  sum. Moreover, we demonstrate that corresponding
examples exist in every space with the Daugavet property and can be
selected to be unit balls of some equivalent norms. 
We also demonstrate that almost SCD sets need not  be  SCD, thus answering a
question posed  by  A. Avil\'{e}s et al.
\end{abstract}

\section{Introduction}

The property ``slicely countably determined'' (SCD for short) for
Banach spaces and their subsets was first considered and studied in
\cite{SCDspaces} (see \cite{SCDsets} for the complete version),
proving to have noticeable applications to Banach spaces with the
Daugavet property, numerical index one and other related properties
\cite{SCDsets,Rearrangement,Spears,SCDsum}. 

 Let us recall the basic definition and examples.  In this paper we
 use the letters $X$, $Y$ and $E$ to denote Banach spaces. By a slice
 of a subset $A \subset X$ we mean a non-empty set which is the
 intersection of $A$ with an open half-space. In other words, it is a
 set of the form  
 \[ 
 S(A, x^{\ast}, \varepsilon) = \Bigl\{ x \in A \colon  x^{\ast}(x) >
 \sup_{a \in A} \, x^{\ast}(a) - \varepsilon \Bigr\}, 
 \]
where $x^*\colon X \to \R$ is a non-zero real linear bounded  functional. In the case of $x^*=0$ the above definition also makes sense and gives the \emph{degenerate slice} $S(A, 0, \varepsilon) = A$. 

\begin{Defi} [\cite{SCDsets}] \label{defSCDsets} 
Let $X$ be a Banach space, $A \subset X$ be a bounded subset. A
sequence of non-empty subsets $U_n \subset A$  is called  \emph{determining}
if for each $B \subset X$ that intersects all the $U_{n}$,  $n \in
\N$, it holds that $A \subset \overline{\conv}{(B)}$. The set $A$  is
said to be  \emph{SCD} if there is a determining sequence of slices  of
$A$. The space $X$ is said to be an  \emph{SCD space} ($X \in \SCD$ for short)
whenever all its bounded subsets are SCD.  
\end{Defi}

Note that every SCD set is separable.  Let us further remark that in the 
definition of SCD sets one may also permit that some slices are degenerate ones.

We will use several times the following consequence of the Hahn-Banach
theorem,   remarked first for convex sets in
\cite[Proposition~2.2]{SCDsets}:  

\begin{Lemm}  \label{len-H-B-scd} 
 Let $U \subset X$  be a  bounded set. A sequence $\{V_n\,:\,n\in\N\}$
 of non-empty subsets of $U$ is determining for $U$ if and only if it
 has the following property ($*$):  every slice of $U$ contains one of
 the $V_n$. 
\end{Lemm}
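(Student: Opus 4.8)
The plan is to prove the two implications of the equivalence separately, both by contradiction, using the Hahn--Banach separation theorem as the bridge between ``$u_0$ lies outside a closed convex hull'' and ``some slice is missed''.

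\textit{Property $(*)$ implies determining.} Suppose every slice of $U$ contains one of the $V_n$, and let $B \subset X$ meet all the $V_n$. Assume toward a contradiction that there is $u_0 \in U \setminus \overline{\conv}(B)$. Since $\overline{\conv}(B)$ is closed and convex, Hahn--Banach yields a nonzero $x^* \in X^*$ and $\alpha \in \R$ with $x^*(u_0) > \alpha \geq x^*(b)$ for all $b \in B$. Because $\sup_{u \in U} x^*(u) \geq x^*(u_0) > \alpha$, the number $\varepsilon := \sup_{u \in U} x^*(u) - \alpha$ is strictly positive, so $S(U,x^*,\varepsilon) = \{u \in U : x^*(u) > \alpha\}$ is a genuine (non-degenerate) slice, and it contains $u_0$. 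By $(*)$ it contains some $V_n$; choosing $b \in B \cap V_n$ we get $x^*(b) > \alpha$, contradicting the choice of $\alpha$. Hence $U \subset \overline{\conv}(B)$, and the sequence is determining.

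\textit{Determining implies property $(*)$.} Suppose $\{V_n : n \in \N\}$ is determining, and assume toward a contradiction that some slice $S = S(U,x^*,\varepsilon)$ contains no $V_n$. Then for each $n$ we may pick $v_n \in V_n \setminus S$, that is, $v_n \in U$ with $x^*(v_n) \leq \sup_{u \in U} x^*(u) - \varepsilon$. The set $B := \{v_n : n \in \N\}$ meets every $V_n$, so by the determining property $U \subset \overline{\conv}(B)$. But $x^*(b) \leq \sup_{u \in U} x^*(u) - \varepsilon$ for every $b \in B$, hence for every $b \in \conv(B)$ by linearity, and hence for every $b \in \overline{\conv}(B)$ by continuity of $x^*$; in particular this holds for all $u \in U$, which is impossible since $\varepsilon > 0$. (The degenerate slice $S(U,0,\varepsilon) = U$ causes no trouble, as it trivially contains every $V_n \subset U$.)

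I do not expect a serious obstacle here: the argument is a routine use of separation. The only points requiring care are that the separating functional obtained in the first implication is automatically nonzero, so that $S(U,x^*,\varepsilon)$ is an honest slice, and that the inequalities be arranged so that the resulting $\varepsilon$ is strictly positive; the transfer of an inequality from $B$ to $\overline{\conv}(B)$ uses only linearity and continuity of $x^*$.
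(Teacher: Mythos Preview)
Your proof is correct and follows essentially the same route as the paper's: both directions use the Hahn--Banach separation theorem and a contradiction argument, with the only cosmetic difference being that for the ``only if'' direction the paper takes $B = U \setminus S$ while you choose one point $v_n$ from each $V_n \setminus S$. Your treatment is somewhat more detailed (making the separation step explicit in the first implication and noting the degenerate-slice case), but the underlying ideas are identical.
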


\begin{proof}
Let ($*$) be fulfilled and let  $B\subset U$ intersect all the
$V_n$.  Then $B$ intersects all the slices of $U$, and then by the
Hahn-Banach separation theorem $\cconv(B)\supset U$. Now the ``only if''
part. Assume that some slice $S =  S(U, x^{\ast}, \varepsilon)$ of $U$
does not contain any of the 
$V_n$. Then $U \setminus S$ intersects all the $V_n$. But $U \setminus
S \subset  \{ x \in X \colon  x^{\ast}(x) \le \sup_{a \in U} \,
x^{\ast}(a) - \varepsilon \}$, hence  $\cconv(U \setminus S)
\subset  \{ x \in X \colon  x^{\ast}(x) \le \sup_{a \in U} \,
x^{\ast}(a) - \varepsilon \}$ which 
means that $\cconv(U \setminus S) \not\supset U$, and consequently
$\{V_n\,:\,n\in\N\}$ is not determining. 
\end{proof}

We can restrict ourselves to study bounded, closed and convex sets
because of the following result (\cite[Proposition~7.20]{Spears} and
\cite[Remark~2.7]{SCDsets}). Since we are going to use it several
times, we will sketch the proof for the readers' convenience.

\begin{Lemm} \label{Prop:SCDconvexhull}
Let $X$ be a Banach space. A bounded set $A \subset X$ is SCD  if and
only if its convex hull ${\conv}{(A)}$ is SCD, and if and only if its
closed convex hull $\overline{\conv}{(A)}$ is SCD. 
\end{Lemm}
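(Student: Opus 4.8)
The plan is to reduce the whole statement to two implications. Write $C=\cconv(A)$ and note that, $A$ being bounded, $R:=\diam(A)=\diam(C)<\infty$, and that every bounded functional $z^\ast$ satisfies $\sup_A z^\ast=\sup_C z^\ast$; hence for a genuine slice, $S(A,z^\ast,\eps)=A\cap S(C,z^\ast,\eps)$ and it is non-empty. The two implications are: \textbf{(R)} if $A\subset A'$ are bounded with $\cconv(A)=\cconv(A')$ and $A'$ is SCD, then $A$ is SCD; and \textbf{(E)} if the bounded set $A$ is SCD, then $C=\cconv(A)$ is SCD. Granting these, the proposition follows from the cycle ``$A$ SCD $\Rightarrow\cconv(A)$ SCD'' (this is (E)), ``$\cconv(A)$ SCD $\Rightarrow\conv(A)$ SCD'' (this is (R) with the pair $(\conv(A),\cconv(A))$, using $\cconv(\conv(A))=\cconv(A)$), and ``$\conv(A)$ SCD $\Rightarrow A$ SCD'' (this is (R) with the pair $(A,\conv(A))$).

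For \textbf{(R)} the argument is a direct application of Lemma~\ref{len-H-B-scd}. If $\{S(A',x_n^\ast,\eps_n)\}_n$ is determining for $A'$, I claim $\{S(A,x_n^\ast,\eps_n)\}_n$ (these are non-empty as remarked) is determining for $A$. By Lemma~\ref{len-H-B-scd} it is enough to show that every slice $S(A,y^\ast,\delta)$ of $A$ contains some $S(A,x_n^\ast,\eps_n)$. But $S(A',y^\ast,\delta)$ is a slice of $A'$, so by Lemma~\ref{len-H-B-scd} it contains some $S(A',x_n^\ast,\eps_n)$; intersecting with $A$ and using $S(A,z^\ast,\eps)=A\cap S(A',z^\ast,\eps)$ gives $S(A,x_n^\ast,\eps_n)\subset S(A,y^\ast,\delta)$, as required.

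The substantial point is \textbf{(E)}, and this is where boundedness is genuinely used. Fix a determining sequence of slices $\{S(A,x_n^\ast,\eps_n)\}_n$ of $A$, with $M_n=\sup_A x_n^\ast$, and set $\mathcal T=\{\,S(C,x_n^\ast,\tfrac{\eps_n}{m})\colon n,m\in\N\,\}$, a countable family of (non-empty) slices of $C$. The key geometric estimate is that for all $n,m$,
\[
S\Bigl(C,\,x_n^\ast,\,\tfrac{\eps_n}{m}\Bigr)\ \subset\ \cconv\bigl(S(A,x_n^\ast,\eps_n)\bigr)\ +\ \tfrac{R}{m}\,B_X ,
\]
$B_X$ being the closed unit ball of $X$. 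To prove it, take $c$ in the left-hand slice; approximating $c$ by points of $\conv(A)$ reduces the matter to a finite convex combination $d=\sum_i\lambda_i a_i$ with $x_n^\ast(d)>M_n-\tfrac{\eps_n}{m}$. Split the indices into ``good'' ones with $a_i\in S(A,x_n^\ast,\eps_n)$ and ``bad'' ones; since $M_n-\tfrac{\eps_n}{m}<x_n^\ast(d)\le (M_n-\eps_n)s+M_n(1-s)$, where $s$ is the total weight of the bad indices, one gets $s<1/m$. Writing $d=(1-s)d'+sd''$ with $d'\in\conv(S(A,x_n^\ast,\eps_n))$ and $d''\in\conv(A)\subset C$ yields $\|d-d'\|=s\|d''-d'\|\le sR<R/m$, and letting the approximation of $c$ improve gives the displayed inclusion.

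Finally, $\mathcal T$ is determining for $C$: by Lemma~\ref{len-H-B-scd}, take any slice $S(C,y^\ast,\delta)$, normalize $\|y^\ast\|=1$, and put $N=\sup_C y^\ast=\sup_A y^\ast$. Then $S(A,y^\ast,\delta/2)$ is a slice of $A$, so it contains some $S(A,x_n^\ast,\eps_n)$, whence $\cconv\bigl(S(A,x_n^\ast,\eps_n)\bigr)\subset\{x\in X\colon y^\ast(x)\ge N-\delta/2\}$. Choosing $m>2R/\delta$ and combining with the geometric estimate, every $c\in S(C,x_n^\ast,\tfrac{\eps_n}{m})$ satisfies $y^\ast(c)\ge N-\delta/2-R/m>N-\delta$, i.e.\ $S(C,x_n^\ast,\tfrac{\eps_n}{m})\subset S(C,y^\ast,\delta)$; this slice lies in $\mathcal T$, so $C$ is SCD. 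The main obstacle is precisely the geometric estimate in (E): one must bound how far a point of a thin slice of $\cconv(A)$ can be from the convex hull of the corresponding fatter slice of $A$, which is exactly the step consuming the finite diameter $R$; every other step is bookkeeping with Lemma~\ref{len-H-B-scd}.
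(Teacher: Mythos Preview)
Your proof is correct and follows essentially the same route as the paper's: the substantial step (E) uses the identical good/bad index split to show that a thin slice of the (closed) convex hull lies within a small neighborhood of the convex hull of the corresponding fat slice of $A$, the only cosmetic differences being that the paper works with $\conv(A)$ rather than $\cconv(A)$ and bounds the error by $2\sup_{a\in A}\|a\|/k$ instead of your $\diam(A)/m$. Your uniform packaging of the easy directions as the single reduction (R) is a slight tidying compared to the paper, which appeals to earlier remarks for those implications.
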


\begin{proof}
It follows readily from
the definition that a bounded set is SCD if and only if its closure is, and
that $A$ is SCD once its convex hull is; cf.\
\cite[Remark~2.7]{SCDsets}. Suppose now that $A$ is SCD, 
and let $\{ S(A,x_{n}^{\ast}, \varepsilon_{n}) \colon n \in \N
\}$ be a family of slices determining for $A$. 
We  consider
 the following (countable) family of slices of $\conv(A)$: 
\[ 
\mathcal{S} := \left\{ S(\conv(A), x^{\ast}_{n},
  \varepsilon_{n}/k)\colon  n,k \in \N \right\}.   
\] 
Given any slice $S(\conv(A), x^{\ast}, \varepsilon)$ of $\conv(A)$,
where  $\|x^*\|=1$ without loss of generality, 
we will show that it contains an element of $\mathcal{S}$, thus
proving that $\conv(A)$ is SCD by Lemma~\ref{len-H-B-scd}. Now, for
the slice of $A$ given by $S(A,x^{\ast},\varepsilon/2)$ we know that
there is $n_{0} \in \N$ such that $S(A, x_{n_{0}},
\varepsilon_{n_{0}}) \subset S(A,x^{\ast},\varepsilon/2)$. Taking $k
\in \N$ big enough we will argue that 
\[ 
S(\conv(A), x^{\ast}_{n_{0}}, \varepsilon_{n_{0}}/k)  \subset
\conv{S(A, x^{\ast}_{n_{0}}, \varepsilon_{n_{0}})} +
\frac{\varepsilon}{2} B_{X} .
\]

To prove this inclusion we let
$r:=\sup_{a\in A} x_{n_0}^*(a)$ and $M:=\sup_{a\in A} \|a\|$, hence
also  $\sup_{a\in \conv(A)} x_{n_0}^*(a)=r$. Consider a convex
combination $a=\sum_{i=1}^n \lambda_i a_i$ of elements $a_i\in A$ such
that $ x_{n_0}^*(a)>r-\eps_{n_0}/k$ where $k$ is not yet
specified. Let $I=\{i\colon x_{n_0}^*(a_i)>r-\eps_{n_0}\}$ and
$J=\{i\colon x_{n_0}^*(a_i)\le r-\eps_{n_0}\}$. We then have 
$$
r-\frac{\eps_{n_0}}k < 
\sum_{i\in I} \lambda_i  x_{n_0}^*(a_i) + 
\sum_{i\in J} \lambda_i  x_{n_0}^*(a_i) \le
r \sum_{i\in I} \lambda_i  + \sum_{i\in J} \lambda_i (r-\eps_{n_0}),
$$
which implies 
$$
\sum_{i\in J} \lambda_i < \frac1k \qquad \mbox{and} \qquad
\Lambda := \sum_{i\in I} \lambda_i  > 1 - \frac1k .
$$
Now put $\mu_i := \lambda_i/\Lambda$ for $i\in I$ and consider the
element
$$
a' = \sum_{i\in I} \mu_i a_i \in \conv S(A, x^{\ast}_{n_{0}},
\eps_{n_0}).
$$
The estimate
$$
\|a-a'\| =  
\Bigl\| (\Lambda-1) \sum_{i\in I} \mu_i a_i +
\sum_{i\in J} \lambda_i a_i \Bigr\| \le 
|\Lambda-1| M + \sum_{i\in J} \lambda_i M <
\frac{2M}k
$$
shows that the above inclusion holds true whenever $k\ge 4M/\eps$. 

It now follows for this choice of $k$ that 
\[ 
\begin{split}
S(\conv(A), x^{\ast}_{n_{0}}, \varepsilon_{n_{0}}/k) & \subset
\conv{S(A, x^{\ast}_{n_{0}}, \varepsilon_{n_{0}})} +
\frac{\varepsilon}{2} B_{X} \\
&\subset \conv{S(A, x^{\ast},
  \varepsilon/2)} + \frac{\varepsilon}{2} B_{X}\\ 
& \subset S(\conv(A),x^{\ast} , \varepsilon/2) + \frac{\varepsilon}{2}B_{X}.
\end{split}
\]
Since trivially $S(\conv(A), x^{\ast}_{n_{0}}, \varepsilon_{n_{0}}/k)  \subset
\conv(A)$, we finally get 
\[
\begin{split}
S(\conv(A), x^{\ast}_{n_{0}}, \varepsilon_{n_{0}}/k) &\subset 
\Bigl( S(\conv(A),x^{\ast} , \varepsilon/2) +
\frac{\varepsilon}{2}B_{X} \Bigr) \cap \conv(A) \\
&\subset 
S(\conv(A), x^{\ast}, \varepsilon). \qedhere%\eqno(\Box)
\end{split}
\]
\end{proof}

 In the case of convex sets a well-known result of Bourgain lets us
 replace  the sequence of
 slices by a sequence of relatively weakly open subsets or even by a
 sequence of convex combinations of slices in the definition of an SCD
 set \cite[Proposition~2.18]{SCDsets}.  
A non-convex set can be not SCD,
 and still possess a determining sequence of relatively weakly open
 subsets; this will be proved in 
Proposition~\ref{Prop-non-conv-det-ex} below.

\begin{Prop} \label{suff-cond-scd}
 The following conditions are sufficient for a convex, bounded and
 separable subset $A \subset X$ to be SCD, see
 \cite[Section~2]{SCDsets} for details: 
\begin{enumerate}
\item[(i)] $A$ is Asplund, i.e., $(X^{\ast}, \rho_{A})$ is separable
  where $\rho_{A}(x^{\ast}) = \sup_{a \in A}{|x^{\ast}(a)|}$ for each
  $x^{\ast} \in X^{\ast}$. 
\item[(ii)] $A$ is \emph{ huskable}, i.e., $A$ is the closed convex hull of
  all $a \in A$ satisfying that for each $\varepsilon > 0$ there is a
  relatively weakly open set  $W \subset A$  with diameter less 
  than $\varepsilon$ containing $a$  (immediate consequence of
  \cite[Theorem~2.19]{SCDsets}). In particular, this happens if $A$ is
  dentable.  
\item[(iii)] $A$ is strongly regular, i.e., every convex subset $L
  \subset A$ has convex combinations of slices of arbitrarily small
  diameter. 
\item[(iv)] $(A, \sigma(A, X^{\ast}))$ has a countable
  $\pi$-basis, that is, a countable family of
  relatively weakly open non-empty 
  subsets such that each relatively weakly open non-empty subset of $A$
  contains a member of that family. This is the case, in particular, if
  $A$ does not 
  contain $\ell_{1}$-sequences. 
\end{enumerate}
\end{Prop}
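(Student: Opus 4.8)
The plan is to establish each of the four sufficient conditions by reducing to the known characterizations of SCD sets, mostly via Lemma~\ref{len-H-B-scd}: a countable family of slices (or, after Bourgain's theorem, of relatively weakly open sets or convex combinations of slices) is determining precisely when every slice of $A$ contains a member of the family. Since $A$ is assumed separable, in each case we will produce a countable family of slices with this covering property.

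For (i), the plan is to use separability of $(X^{\ast},\rho_A)$ to pick a $\rho_A$-dense sequence $(x_n^{\ast})$ in $S_{X^{\ast}}$, and then take the countable family $\{S(A,x_n^{\ast},1/m): n,m\in\N\}$. Given an arbitrary slice $S(A,x^{\ast},\varepsilon)$, one approximates $x^{\ast}$ by some $x_n^{\ast}$ in $\rho_A$-norm well enough that, adjusting $m$, $S(A,x_n^{\ast},1/m)\subset S(A,x^{\ast},\varepsilon)$; this is a routine $\varepsilon/3$-type estimate using that $\rho_A$ controls the oscillation of functionals over $A$. For (ii), I would first recall that if $A$ is the closed convex hull of its points of weak-to-norm continuity (equivalently, points lying in arbitrarily small relatively weakly open slices), then a standard fact — essentially \cite[Theorem~2.19]{SCDsets} — gives that $A$ is SCD; the argument is that around each such point one has small relatively weakly open sets, and since $A$ is separable one extracts a countable subfamily of these that is determining, invoking Bourgain's theorem to pass from relatively weakly open sets to slices. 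The case of $A$ dentable is the special case where the small relatively weakly open sets can be taken to be slices themselves.

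For (iii), strong regularity already delivers, for the whole convex set $L=A$ and each $m$, a convex combination of slices $C_m = \frac{1}{k}\sum_{i=1}^k S(A,x_i^{\ast},\delta_i)$ of diameter less than $1/m$; but to get a \emph{countable} determining family one must arrange these convex combinations to ``probe'' every slice. The trick is to apply strong regularity not to $A$ but to each set of the form $L=\cconv(S(A,x^{\ast},\varepsilon))\cap A$; more carefully, one builds a countable family by taking, for a countable dense set of functionals and rational $\varepsilon$ and $m$, a small-diameter convex combination of slices sitting inside each $S(A,x^{\ast},\varepsilon)$ — and here strong regularity of the slice, together with Bourgain's theorem allowing convex combinations of slices in place of slices in the definition of SCD, closes the argument. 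This orchestration is where I expect the main difficulty: strong regularity is an ``internal'' smallness condition, and turning it into a single countable family that covers all slices requires the separability of $A$ to control how many functionals one must test against, plus care that a convex combination of slices of small diameter that meets a given slice actually lies essentially inside it.

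For (iv), with a countable $\pi$-basis $\{W_n\}$ of relatively weakly open sets, the family $\{W_n\}$ is itself determining — every slice is a nonempty relatively weakly open set, hence contains some $W_n$ — so $A$ is SCD directly by the relatively-weakly-open version of the definition, again via Bourgain's theorem. The final assertion, that $A$ not containing $\ell_1$-sequences forces a countable $\pi$-basis, is the deepest input: it rests on the dichotomy (Rosenthal/Bourgain-type results, as used in \cite{SCDsets}) that a separable set without $\ell_1$-sequences has weakly sequentially dense behaviour allowing one to extract such a countable $\pi$-basis; I would simply cite \cite[Section~2]{SCDsets} for this, as reproving it is outside the scope here. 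Throughout, the running theme — and the one genuine obstacle in (iii) — is converting ``local'' smallness or regularity into a fixed countable collection of slices, which is exactly what separability of $A$ and Lemma~\ref{len-H-B-scd} are there to enable.
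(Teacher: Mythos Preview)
The paper does not prove this proposition at all: it is stated with a pointer to \cite[Section~2]{SCDsets} and no argument is given. So there is nothing to compare your attempt against in the present paper; your sketch is essentially an attempt to reconstruct the cited proofs.

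Your outlines for (i), (ii) and (iv) are along standard lines and would work. For (iii), however, there is a genuine gap. You propose to run over ``a countable dense set of functionals and rational $\varepsilon$'' and, for each resulting slice, use strong regularity to place a small-diameter convex combination of slices inside it. But strong regularity does \emph{not} provide a countable dense set of functionals: separability of $(X^{\ast},\rho_A)$ is exactly condition~(i), and (iii) does not imply it (for instance $B_{L_1[0,1]}$ is strongly regular while $L_\infty[0,1]$ is nonseparable). Without such a countable family of ``test'' slices, there is no reason your countable collection of small convex combinations should meet \emph{every} slice of $A$; having a small convex combination of slices inside each of countably many slices $S_n$ does not force one inside an arbitrary slice $S$ unless you already know some $S_n\subset S$, which is precisely what you are trying to establish.

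The argument in \cite{SCDsets} for (iii) is considerably more delicate: one does not enumerate slices from the outside, but rather exploits strong regularity recursively together with separability of $A$ (not of the dual) to build a countable family of convex combinations of slices that forms a $\pi$-basis in the appropriate sense. Since the present paper simply cites that reference, reproducing that construction is beyond what is required here; but your sketch as written would not close.
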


As a consequence of the previous examples, if $X$ is a separable
Banach space without isomorphic copies of $\ell_{1}$ (in particular if
it is Asplund) or if it has the  convex point of continuity property
 (CPCP),  in particular if it has 
the Radon-Nikodym property (RNP), then $X \in \SCD$.  

Another class of examples \cite[Theorem~3.1]{Rearrangement}: the unit
ball of every space with a 1-unconditional basis  is SCD. It is an
open question whether every Banach space with an unconditional basis
is an SCD space. 

In order to present typical applications of SCD sets to operators in
Banach spaces,  let us introduce some definitions.   
A bounded linear operator $T\colon X \to X$  satisfies the
\emph{Daugavet equation} if   
$$
\|I +  T\| = 1 + \|T\|,
$$
and satisfies the \emph{alternative Daugavet equation} if 
$$
\max \{\|I + \theta T\| : |\theta| = 1\} = 1 + \|T\|.
$$
A Banach space has the  \emph{Daugavet property} if every rank-1
operator $T\colon X \to X$ satisfies the Daugavet equation, and it
possesses  the \emph{alternative Daugavet property} if every rank-1
operator $T\colon X \to X$ satisfies the alternative Daugavet
equation.  Typical examples of spaces possessing the Daugavet property
(and consequently the alternative Daugavet property) are  $C[0,1]$ and
$L_{1}[0,1]$.  Typical examples of spaces not possessing the Daugavet
property but nevertheless having the alternative Daugavet property are
$c_0$ and $\ell_1$. See \cite{KadSSW} and \cite{MO}. 

Theorem 4.4 of  \cite{SCDsets} says, in particular, that if $X$
possesses the alternative Daugavet property and $B_X$ is SCD, then
{\bf every} bounded linear operator on $X$ satisfies the alternative
Daugavet equation. Theorem~5.3 and Proposition~5.8 of \cite{SCDsets}
say that if  $X$ possesses the (alternative) Daugavet property and
$T(B_X)$ is SCD, then the operator $T$ satisfies the (alternative)
Daugavet equation. More applications in the same vein can be found in
\cite[Section~3]{Bosenko}, \cite[Theorems~3.4 and~3.7]{LipSlices}  and
\cite[Sections~3,~7]{Spears}.

The number of known examples of separable Banach spaces $X$ that are  not 
SCD  is limited to those having the Daugavet property.  In these
spaces the unit ball satisfies following ``anti-SCD'' condition, see
\cite[Example~2.13]{SCDsets}. 

\begin{Lemm} \label{daug-not-scd}
Let $X$ be a Banach space with the Daugavet property. Then, for every
sequence of slices $(S_{n})$ of $B_{X}$ and every $x \in S_{X}$ there
is a set $B=\{x_n\colon n \in \N\}$ with $x_{n} \in S_{n}$, $n = 1,2,
\dots$,  such that  $x  \notin \spn{B}$. 
\end{Lemm}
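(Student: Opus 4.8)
The statement we want to prove is the ``anti-SCD'' property of the unit ball in a space with the Daugavet property: given any sequence of slices $(S_n)$ of $B_X$ and any $x\in S_X$, we can pick $x_n\in S_n$ with $x\notin\spn\{x_n:n\in\N\}$. The plan is to exploit the geometric characterisation of the Daugavet property: a Banach space $X$ has the Daugavet property if and only if for every $y\in S_X$, every slice $S(B_X,y^*,\eps)$ and every $\delta>0$ there is a point $z$ in that slice with $\|y+z\|>2-\delta$ (equivalently, every slice of $B_X$ has points almost at distance $2$ from any prescribed unit vector). Using this, one builds the sequence $(x_n)$ one term at a time so that at stage $n$ the new vector $x_n\in S_n$ is ``almost norming'' a suitable functional that simultaneously controls $x$ and all previously chosen $x_1,\dots,x_{n-1}$; this forces $x_n$ to be quantitatively far from $\spn\{x,x_1,\dots,x_{n-1}\}$, and careful bookkeeping of the errors then yields $x\notin\spn\{x_n:n\in\N\}$.

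\medskip
\textbf{Key steps.} First I would fix a summable sequence of positive reals, say $(\delta_n)$ with $\sum_n \delta_n$ small, to be used as an error budget. Second, I would proceed inductively: suppose $x_1,\dots,x_{n-1}$ have been chosen. The finite-dimensional space $F_{n-1}:=\spn\{x,x_1,\dots,x_{n-1}\}$ has a finite $\delta_n$-net on its unit sphere, hence there is a finite set of functionals $\Phi_{n-1}\subset S_{X^*}$ that $(1-\delta_n)$-norms every element of $F_{n-1}$. Third, I apply the Daugavet-type lemma inside the slice $S_n$: there exists $x_n\in S_n$ with $\|x+x_n\|>2-\delta_n$ and, more importantly, $\|u+x_n\|$ close to $\|u\|+1$ for each unit vector $u$ in $F_{n-1}$ — this can be arranged because the Daugavet property lets a single slice contain a vector almost orthogonal (in the sense of $\|u+x_n\|\approx\|u\|+\|x_n\|$) to any fixed finite-dimensional subspace. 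Concretely, one uses that for each $\varphi\in\Phi_{n-1}$ the set $S_n$ still meets $\{z:\varphi(z)>1-\delta_n\}$ after finitely many intersections, because in a Daugavet space every slice has diameter $2$ and non-empty intersection with all such ``fat'' halfspaces; iterating over the finitely many $\varphi$'s produces the desired $x_n$. Fourth, having built the whole sequence, I argue that if $x=\sum_{n\in J}a_n x_n$ for some finite $J$ and scalars $a_n$, then looking at the largest index $m\in J$ and the functional machinery at stage $m$ gives a contradiction with the near-orthogonality built in, provided the errors were chosen small enough.

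\medskip
\textbf{Main obstacle.} The delicate point is Step 3–4: ensuring that the single vector $x_n$ chosen from the prescribed slice $S_n$ is simultaneously far from the \emph{entire} (growing) finite-dimensional span, not merely far from $x$. One must be careful that intersecting $S_n$ with finitely many ``almost-norming'' halfspaces still leaves a non-empty set to which the Daugavet-type selection lemma applies — here the fact that in a Daugavet space \emph{every} slice of $B_X$ is non-trivially large (indeed, for any $\eps$ and any finite set of unit functionals, a slice contains a point on which all of them are within $\eps$ of their sup over $B_X$, by an iterated application of the basic Daugavet inequality) is exactly what rescues the argument. Closing the bookkeeping loop — choosing the $\delta_n$ small enough that a genuine linear relation $x=\sum a_n x_n$ becomes impossible — is then routine: one normalises the hypothetical relation, isolates the top term, and the accumulated near-orthogonality forces the top coefficient to be both bounded below and arbitrarily small, a contradiction. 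The reference \cite[Example~2.13]{SCDsets} presumably carries out precisely this kind of inductive construction.
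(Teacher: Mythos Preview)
The paper does not actually prove this lemma; it merely cites \cite[Example~2.13]{SCDsets}. Your overall plan --- an inductive construction producing a sequence $(x_n)$ with $x_n\in S_n$ such that $(x,x_1,x_2,\dots)$ behaves like the $\ell_1$ basis, hence $x\notin\spn\{x_n\}$ --- is exactly the standard argument carried out in that reference and in \cite{KadSSW}.

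However, the concrete mechanism you propose for the inductive step is wrong. You want to pick $x_n\in S_n$ with $\varphi(x_n)>1-\delta_n$ for \emph{every} $\varphi$ in a finite $(1-\delta_n)$-norming set $\Phi_{n-1}$ for $F_{n-1}$. This is impossible: since $F_{n-1}$ is a subspace, a norming set must contain functionals $\varphi,\psi$ with $\psi$ close to $-\varphi$ (to norm both $u$ and $-u$), and no single $x_n$ can make both $\varphi(x_n)$ and $\psi(x_n)$ close to~$1$. More basically, your claim that ``every slice has non-empty intersection with all such fat halfspaces'' is false even in $C[0,1]$: the slices $\{f:f(0)>1-\alpha\}$ and $\{f:-f(0)>1-\alpha\}$ are disjoint for small~$\alpha$.

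The correct inductive step uses instead the \emph{sub-slice} form of the Daugavet characterisation (see \cite[Lemma~2.8]{KadSSW}): for every $y\in S_X$, every slice $S$ of $B_X$ and every $\eps>0$ there is a \emph{sub-slice} $S'\subset S$ such that $\|y+z\|>2-\eps$ for \emph{all} $z\in S'$. Taking a finite $\eps$-net $\{y_1,\dots,y_k\}$ of $S_{F_{n-1}}$ and applying this lemma $k$ times (each time shrinking the slice) produces a sub-slice of $S_n$ every point of which satisfies $\|y_i+z\|>2-\eps$ for all~$i$, hence $\|f+\lambda z\|\ge(1-\eps)(\|f\|+|\lambda|)$ for all $f\in F_{n-1}$ and $\lambda\in\R$. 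Any $x_n$ chosen there then works, and your Step~4 bookkeeping goes through. So the architecture of your argument is right; only the engine of Step~3 needs to be replaced.
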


It is an open question whether every separable Banach space $X \notin
\SCD$  is isomorphic to a space with the Daugavet property.

When studying applications of the property SCD, it was pointed out in
\cite[Remark~4.5 and~5.4]{SCDsets} that some of them (in particular
the above-mentioned  \cite[Theorem~4.4, Theorem~5.3]{SCDsets}) hold if
in the definition of an SCD set we replace the convex hull by the
absolutely convex hull, introducing the next presumably weaker
condition: 

\begin{Defi}
A bounded set $A \subset X$ of a Banach space $X$ is called
\emph{almost SCD} (aSCD, in short) if there is a sequence of slices
$S_{n}$ of $A$ satisfying that for each $B \subset X$ with $B \cap
S_{n} \neq \emptyset$ for every $n \in \N$, it holds that $A \subset
\overline{\aconv}{(B)}$. 
\end{Defi}

It is asked in \cite[Question~7.5]{SCDsets} whether the  classes of
aSCD sets and SCD  sets coincide. In the present paper we demonstrate
that  the properties  aSCD and SCD are not equivalent for general
bounded closed convex sets, but in the case that is most important for
the applications, namely the case of balanced bounded closed convex
sets, the equivalence holds true. We also solve in the negative
natural questions about preserving SCD  by the operations of union,
intersection and Minkowski  sum.

The main part of the paper consists  of four sections. At the
beginning of Section~\ref{sec2} we  construct a set $A$, whose
properties will be the base of all the remaining examples (the letter
$A$ will be fixed afterwards for that special set). Then, we present
the  promised examples for the intersection of SCD sets (which will be
$A$ and $-A$), after that for the Minkowski sum  (which will again be
$A$ and $-A$),  and finally for the union (some shifts of   $A$ and
$-A$). In fact, we demonstrate the existence of such examples in every
space with the Daugavet property. The examples constructed in
Section~\ref{sec2}  are not centrally symmetric, which is not entirely
satisfactory, because in all the applications mentioned above only
sets symmetric with respect to zero  appear.  Section~\ref{sec3}
is devoted to the symmetrization of our examples,
after which one can see that the  operations of Minkowski sum, union
and intersection do not preserve the property SCD even if the sets in
question are unit balls of some equivalent norms. In Section~\ref{sec4}  we
give an example of an  aSCD set that is not  SCD, which will be the
union of $A$ with a specially constructed subset of  $-A$. The last
short section lists some open problems about SCD sets. 

Since in the definition of a slice and, consequently, in the
definition of an SCD space only real scalars are used, below, if the
contrary  is not stated explicitly,  {\bf we will consider only real
  Banach spaces}.  
We have already used without explanation some standard Banach space
notation like $B_X$, $S_X$ or $X^*$ for the closed unit ball, unit
sphere and the dual space respectively. All unexplained notation below
(if any) is also standard and can be found in every Banach space
textbook, for example in \cite{montesinosAnalysis}.

%%%%%%%%%%%%%%%%%%%%%%%%%%%%%%
%%%%%%%%%%%%%%%%%%%%%%%%%%%%%%

\section{The promised examples}\label{sec2}

\subsection{The intersection of SCD sets}

 The examples which we are going to present in this paper will be
 constructed in an arbitrary Banach space $X$ with the Daugavet
 property.  According to \cite[Theorem~4.5]{KadSW2} $X$ contains a
 separable subspace with the Daugavet property, so without loss of
 generality we assume that $X$ itself is separable. Fix a
 one-codimensional closed subspace $E \subset X$. According to
 \cite[Theorem~2.14]{KadSSW} $E$ also has  the Daugavet property, so
 $B_E$ enjoys the property from Lemma~\ref{daug-not-scd}, consequently
 $B_E$ is neither SCD, nor aSCD. The aim of the construction below is
 to include $B_E$ into an SCD set $A \subset X$ in such a way that
 $B_E$ lies in the boundary of $A$. This construction will be used in
 all the examples presented in this paper. 

Recall that a space $Y$ is called \emph{locally uniformly rotund} or
\emph{locally uniformly convex} (LUR 
for short) if for every $y \in S_Y$ and every sequence $(y_{n})$ in
$B_{Y}$ the condition $\|y + y_n\| \to 2$ implies that $\|y - y_n\|
\to 0$. In a LUR space $Y$  every point $y$ of the unit sphere $S_{Y}$
is \emph{strongly exposed}, that is, there is $y^{\ast} \in
S_{Y^{\ast}}$ with $y^{\ast}(y) = 1$  such that every sequence
$(y_{n})$ in $B_{Y}$ with $y^{\ast}(y_{n}) \rightarrow y^{\ast}(y)$
satisfies that  $\|y - y_n\| \to 0$.  It is a classical result by
M.~Kadets (\cite{LUR}, see also  \cite[p.~383,
Theorem~8.1]{montesinosAnalysis})  that every separable Banach space
admits an equivalent LUR norm. 

In fact, there is  an equivalent LUR norm $\varphi \colon E \to [0, +
\infty)$ such that $\frac12\|x\| \le \varphi (x) \le \|x\|$ for all $x
\in E$. Then for every  $t > 0$ the formula  
\[ \| x\|_{t} = \sqrt{\| x\|^{2} + t^2 \varphi(x)^2 }, \qquad x \in E  \]
defines an equivalent LUR  norm on $E$ \cite[Chapter~2, p.~53,
beginning of Section~2]{DeGoZi}  satisfying that 
\begin{equation} 
\label{equa:equivalentNorms}
\| x\| \le \| x \|_{t} \le \sqrt{1 + t^2} \: \| x\|. 
\end{equation}
In particular every point of the unit sphere $S_{(E, \|\cdot \|_{t})}$
is strongly exposed. If  $t = 0$, then we get the original norm on
$E$, i.e., $ \| x \|_{0} = \| x \|$.  We are going to use the notation
$\| \cdot \|_{t}^{\ast}$ for the norm of $(E, \| \cdot
\|_{t})^{\ast}$. In the case of $t=0$, where $\| \cdot \|_{0}$ is just
the original norm $\| \cdot \|$, we will write $\|
y^{\ast}\|_{0}^{\ast} = \| y^{\ast}\|$. 

We now construct the set which  plays the fundamental role in all our
counterexamples. Let  $e_0 \in X \setminus E$ be a fixed element of
norm 1. Then  $X = E \oplus \spn e_0$. 
In the sequel we will use notation $x \oplus t$ in order to denote an
element of the form $x + te_0$, where $x \in E$, $t \in \R$. We will
also consider the following equivalent norm on $X$: $\|x \oplus
t\|_{\infty} = \max\{\|x\|, |t|\}$. Remark that the dual space to our
$X = E \oplus \spn e_0$ can be represented as the set of formal
expressions $y^{\ast} \oplus \lambda$, $y^{\ast} \in E^*$, $ \lambda
\in \R$, that act on elements of $X$ by the natural rule $\langle
y^{\ast} \oplus \lambda, x \oplus t\rangle = y^{\ast}(x) + \lambda
t$.

\begin{Prop} \label{prop:counterexample}
The subset 
\begin{equation}\label{equa:mainSet} 
A := \{ x \oplus t \in X \colon \|x\|_{t}^{2} + 3 t^{2} \le 1, \  t
\ge 0 \}  \subset X  
\end{equation}
has the following properties:
\begin{enumerate}
\item[(a)] Every element  $x \oplus t \in A$ satisfies $ t \in
  \left[0, \frac{1}{\sqrt 3}\right]$  and  $\|x\| \le \sqrt{1 -  3
    t^{2}}$, 
in particular $A$ is  bounded.
\item[(b)] Every element $x \oplus t \in X$ satisfying 
$t \in \left[0, \frac{1}{\sqrt 3}\right]$   and  $\|x\| \le
\sqrt\frac{1 -  3 t^{2}}{1 + t^2}$ 
belongs to $A$.
\item[(c)]  $A$ is closed.
\item[(d)] $A$  is convex.
\item[(e)] $A$  is SCD. 
\end{enumerate}
\end{Prop}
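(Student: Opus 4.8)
The plan is to verify (a)–(d) by direct, essentially routine computations with the defining inequality $\|x\|_t^2 + 3t^2 \le 1$, and then to concentrate the real work on part (e). For (a): if $x\oplus t\in A$ then $3t^2\le 1$ gives $t\le 1/\sqrt3$; moreover $\|x\|\le\|x\|_t$ by \eqref{equa:equivalentNorms}, so $\|x\|^2\le\|x\|_t^2\le 1-3t^2$. For (b): if $\|x\|^2\le(1-3t^2)/(1+t^2)$ and $t\in[0,1/\sqrt3]$, then $\|x\|_t^2\le(1+t^2)\|x\|^2\le 1-3t^2$ by the right-hand inequality in \eqref{equa:equivalentNorms}, so $x\oplus t\in A$. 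For (c): each map $t\mapsto\|x\|_t$ is continuous (jointly in $x$ and $t$, since $\|x\|_t=\sqrt{\|x\|^2+t^2\varphi(x)^2}$), so $A$ is the intersection of the closed half-space $\{t\ge0\}$ with the closed sublevel set of the continuous function $(x,t)\mapsto\|x\|_t^2+3t^2$, hence closed. For (d): the point is that $(x,t)\mapsto\|x\|_t^2+3t^2$ is convex on $E\times\R$; this follows because $\|x\|_t^2=\|x\|^2+t^2\varphi(x)^2=\|x\|^2+\|t\,\varphi(\cdot)\text{-scaled vector}\|^2$ — more precisely $\|x\|_t$ is the norm of $(x,t\varphi(x))$ measured by $\sqrt{\|\cdot\|^2+(\cdot)^2}$ applied to a linear image $(x,t)\mapsto$ something; the cleanest route is to note $\|x\oplus t\|_t:=\|x\|_t$ is (the restriction to $t\ge$ anything of) a norm on $E\oplus\R$, indeed it is exactly the norm from the quoted construction in \cite{DeGoZi} applied to the pair $(E,\varphi)$, so $x\mapsto\|x\|_t$ is convex for fixed $t$, and then a short argument (or direct expansion) upgrades this to joint convexity of $\|x\|_t^2+3t^2$; adding $\{t\ge0\}$ keeps convexity.

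For part (e), the strategy is to exploit that $A$ is built from LUR norms, so that most of its boundary points are strongly exposed, while the only ``bad'' part — the copy of $B_E$ sitting at $t=0$ — is a \emph{slice} of $A$ (or contained in one) and therefore contributes only countably many slices to a determining sequence via a scaling trick, the same device used in the proof of Lemma~\ref{Prop:SCDconvexhull}. Concretely, I would argue that $A$ is huskable in the sense of Proposition~\ref{suff-cond-scd}(ii): every point $x\oplus t$ with $t>0$ lies in a relatively weakly open subset of $A$ of arbitrarily small diameter, because near such a point the constraint $\|x\|_t^2+3t^2\le1$ defines a ``rotund'' piece of boundary coming from the LUR norm $\|\cdot\|_t$ (strong exposedness of points of $S_{(E,\|\cdot\|_t)}$, plus strict convexity in the $t$-direction from the $3t^2$ term), and interior points of $A$ are trivially in small weakly open sets. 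Thus the closed convex hull of the ``denting-like'' points already contains all of $A$ except possibly the slab $\{t=0\}$, i.e.\ $B_E\times\{0\}$. To handle that slab I would show $A=\overline{\conv}\big(\{x\oplus t\in A: t>0\}\big)$: since $A$ is convex and closed and the set $\{t>0\}\cap A$ is dense in the $t=0$ face (every $x\in B_E$ with $\|x\|_0<1$, hence a dense subset, can be pushed slightly into $t>0$ by part (b)), its closure recaptures all of $B_E\times\{0\}$.

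The honest subtlety, and what I expect to be the main obstacle, is that Proposition~\ref{suff-cond-scd}(ii) as stated asks $A$ to be the closed convex hull of its points with arbitrarily small weakly open neighborhoods — and a point $x\oplus 0\in B_E\times\{0\}$ is emphatically \emph{not} such a point (that is the whole reason $B_E$ is not SCD). So one cannot literally invoke huskability for $A$ itself; instead I would use Lemma~\ref{len-H-B-scd} directly. Let $\{W_n\}$ be a countable family of relatively weakly open subsets of $A$ of diameter $\to0$ covering a dense subset of $\{t>0\}\cap A$ (these exist because that region is metrizable for the weak topology on the bounded set $A$, and each of its points is a point of weak-to-norm continuity by the LUR-rotundity just described, so in fact $\{t>0\}\cap A$ has a countable $\pi$-basis by \ref{suff-cond-scd}(iv)-type reasoning); refine each $W_n$ to a slice $S_n\subset W_n$ of $A$ using Bourgain's lemma as cited after Proposition~\ref{suff-cond-scd}. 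Then I would check property $(*)$ of Lemma~\ref{len-H-B-scd}: given any slice $S=S(A,x^*\oplus\lambda,\varepsilon)$ of $A$, I must find some $S_n\subset S$. If $S$ ``sees'' a point with $t>0$, i.e.\ $S\cap\{t>0\}\ne\emptyset$, then since that region is where the $\pi$-basis lives, $S$ contains one of the $W_n$ (shrinking $\varepsilon$ first, as in the Lemma~\ref{Prop:SCDconvexhull} proof), hence the corresponding $S_n$. The remaining case is $S\subset\{t=0\}=B_E\times\{0\}$; but this case cannot occur for a genuine slice, because any half-space in $X$ that meets $A$ at some boundary point of the face $B_E\times\{0\}$ necessarily also meets $\{t>0\}\cap A$ — geometrically, $B_E\times\{0\}$ is a face of codimension $\ge1$ and $A$ bulges into $t>0$ above its relative interior, so no slice of $A$ is contained in that face (this needs a small separate lemma: the exposing functional $x^*\oplus\lambda$ of such a hypothetical slice would have to satisfy $\lambda\ge0$ from $t\ge0$ and the supremum over $A$ attained only at $t=0$ would force $\lambda>0$, but then points just inside with tiny $t>0$ and $x$ nearly optimal still lie in $S$). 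That geometric lemma — that $A$ has no slice living entirely in its $t=0$ face — is the crux; once it is in hand, Lemma~\ref{len-H-B-scd} finishes the proof that $A$ is SCD.
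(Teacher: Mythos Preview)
Your treatment of (a)--(c) matches the paper's. For (d) your sketch is vague; the paper does it cleanly by writing $A = \{x\oplus t : H(\|x\|, \varphi(x), t) \le 1,\ t \ge 0\}$ with $H(r,s,t) = r^2 + t^2 s^2 + 3t^2$, checking that $H$ is convex on $[0,1]^3$ via its Hessian and nondecreasing in each variable, and then combining this with convexity of $\|\cdot\|$ and $\varphi$.

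For (e), your first instinct --- huskability --- is correct and is exactly the paper's argument; you then talk yourself out of it on the basis of a misreading of Proposition~\ref{suff-cond-scd}(ii). Huskability does \emph{not} require every point of $A$ to lie in small relatively weakly open neighborhoods; it only requires $A$ to be the closed convex hull of those points that do. You yourself argue that the boundary points $\tilde A := \{x\oplus t\in A : 0 < t < 1/\sqrt3,\ \|x\|_t^2 = 1 - 3t^2\}$ have such neighborhoods (via strong exposedness in the LUR norm $\|\cdot\|_t$) and that $\overline{\conv}(\tilde A) = A$; that \emph{is} huskability, and the paper proceeds precisely this way, exhibiting the explicit small weakly open set $W = \{x\oplus t\in A : x_0^*(x) > \beta_0(r_0 + 2\delta),\ |t - t_0| < \delta^2/2\}$ around a given $x_0\oplus t_0 \in \tilde A$ and bounding its diameter. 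Your detour through a $\pi$-basis, Bourgain refinement, and the ``no slice lives in the $t=0$ face'' lemma is therefore unnecessary (and the Bourgain step as you phrase it --- extracting a single slice inside a small weakly open set --- is not quite what Bourgain's lemma gives).

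One incidental slip: the claim that ``interior points of $A$ are trivially in small weakly open sets'' is false in infinite dimensions --- a weak neighborhood of a norm-interior point $x_0$ contains a set of the form $x_0 + (Z \cap rB_X)$ with $Z$ of finite codimension, hence has diameter at least $2r$. This does not affect the argument, since only the boundary points in $\tilde A$ are needed for huskability, but it is worth flagging.
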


\begin{proof}
Conditions (a) and (b) follow immediately from \eqref{equa:mainSet}
and  \eqref{equa:equivalentNorms}. 
(c) follows from the continuity of the map $x \oplus t \mapsto \| x\|_{t}$. 
To check (d), that $A$ is convex, note that  the set can be rewritten as 
\[ A=\{ x \oplus t \in X \colon H(\| x\|,\varphi(x),t) \le 1 \} \cap
\{ x \oplus t \in X \colon t \ge 0 \} \] 
where $H(r,s,t):=r^2 + t^2 s^2  + 3t^2$. $H$ is a convex function on
$[0,1]^{3}$, indeed its Hessian matrix  
\[ 
\left( \begin{array}{ccc}
2 & 0 & 0 \\
0 & 2t^2 & 4ts \\
0 & 4ts & 6 + 2 s^2 \end{array} \right)
\] 
is positive definite on $(0,1)^{3}$, since the determinants of its
principal minors are all positive on this domain: $\Delta_{1}=2$,
$\Delta_{2} = 4 t^{2}$ and $\Delta_{3} = 12 t^{2} (1 -
s^{2})$. Furthermore, $H$ is nondecreasing in each variable when
considered defined on $[0,1]^{3}$, so for  $x_{i} \oplus t_{i} \in A$
($i=1,2$) and $0 \le \lambda \le 1$ we have that  
\[ 
\begin{split}
& H\left(\| \lambda x_{1} + (1 - \lambda) x_{2} \|, \varphi(\lambda
  x_{1} + (1 - \lambda) x_{2}),\lambda t_{1} + (1-\lambda)
  t_{2}\right) \\ 
& \le H\left(\lambda \| x_{1}\| + (1 - \lambda) \| x_{2}\|, \lambda
  \varphi(x_{1}) + (1 - \lambda) \varphi(x_{2}), \lambda t_{1} + (1 -
  \lambda) t_{2}\right)\\  
& \le \lambda \: H\left(\| x_{1}\|, \varphi(x_{1}), t_{1}\right) + (1
  - \lambda) \: H\left(\| x_{2}\|, \varphi(x_{2}), t_{2}\right) \\
& \le 1   - \lambda + \lambda = 1.  
\end{split}
\] 
Therefore $\lambda (x_{1} \oplus t_{1}) + (1 - \lambda) (x_{2} \oplus
t_{2}) \in A$. 

We finally prove (e), that $A$ is an SCD set, by showing that it is
huskable (see Proposition~\ref{suff-cond-scd}(ii)).  
To this end, denote $\tilde A = \{x \oplus t \in A : 0 < t <
1/\sqrt{3}$, $\| x\|_t^{2} = 1 - 3 t^{2}\}$. Evidently,
$\overline{\conv}{(\tilde A)} = A$, so it remains to demonstrate the
following statement:  
\begin{quote}
For every $\varepsilon > 0$ and every $x_{0} \oplus t_{0} \in \tilde
A$  there is a relatively weakly open subset of $A$ containing $x_{0}
\oplus t_{0}$ with  $\|\cdot\|_{\infty}$-diameter less than
$4\varepsilon$.  
\end{quote}
For this, let us write briefly $r_{0}:= (1 - 3t_{0}^{2})^{1/2} = \|
x_{0}\|_{t_{0}}$.  

Since $x_{0}$ is a strongly exposed point of $r_{0} \, B_{(E, \| \cdot
  \|_{t_{0}})}$, there exist $x_{0}^{\ast} \in S_{(E^{\ast}, \|
  \cdot\|_{t_{0}}^{\ast})}$ and $\beta_{0} \in (0,1)$ satisfying: 
\begin{enumerate}
\item[(i)] $x_{0}^{\ast}(x_{0})= r_{0} $.
\item[(ii)]  If $x \in r_{0} \, B_{(E, \| \cdot \|_{t_{0}})}$ and
  $x_{0}^{\ast}(x) > \beta_{0}r_{0}$, then $\| x - x_{0}\| <
  \varepsilon$. 
\end{enumerate}
 Take $\delta > 0$ small enough so that
\begin{equation}\label{equa:deltaConditions}  
x_{0}^{\ast}(x_{0}) > \beta_{0} (r_{0} + 2 \delta) \hspace{3mm} \mbox{
  and } \hspace{3mm}  \frac{2\delta}{2\delta + r_{0}} +
\frac{\delta^{2}}{2} < \varepsilon.  
\end{equation}
Consider the relatively weakly open subset $W$ of $A$ given by
\[ W:=\{ x \oplus t  \in A\colon x^{\ast}_{0}(x) > \beta_{0}(r_{0} + 2
\delta),\: |t - t_{0}| < \delta^{2}/2 \}. \] 
It is immediate that $x_{0} \oplus t_{0} \in W$. Furthermore, given $x
\oplus t \in W$ we have that $|t^{2} - t_{0}^{2}| < \delta^{2}$ and
hence 
\[ 
\begin{split}
 \| x\|_{t_{0}}  &=   \left(\| x\|^{2} + t_{0}^{2} \varphi(x)\right)^{1/2}  \\
 &=   \left(\| x\|_t^{2} + (t_{0}^{2} - t^2) \varphi(x)\right)^{1/2} \\
 & \le \left(\| x\|_{t}^{2} + |t_{0}^{2} - t^{2}|\right)^{1/2}\\
&  \le \left( 1 - 3t^{2} + |t_{0}^{2} - t^{2}| \right)^{1/2}  \\
&\le \left( 1 - 3t_{0}^{2} + 4\delta^{2} \right)^{1/2} \\
&\le  r_{0} + 2 \delta.
\end{split}
\]
The last inequality together with \eqref{equa:deltaConditions} gives that
\[ 
\left\| \frac{r_{0} \ x}{r_{0} + 2\delta} \right\|_{t_{0}} \le
r_{0} \hspace{3mm} \mbox{ and } \hspace{3mm} x_{0}^{\ast}\left(
  \frac{r_{0} \ x}{r_{0} + 2\delta} \right) > \beta_{0} r_{0}. 
\] 
By (ii) it follows that
\[ 
\varepsilon > \left\| \frac{r_{0} \ x}{r_{0} + 2\delta} -
  x_{0}\right\| \ge \| x - x_{0}\| - \| x\| \frac{2\delta}{r_{0} +
  2\delta} \ge \| x - x_{0}\| - \frac{2\delta}{r_{0} + 2\delta}, 
\]
and therefore 
\[ 
\left\| x \oplus t \, - \,x_{0} \oplus t_{0} \right\|_{\infty} =
\max{\{ \| x - x_{0}\|, |t - t_{0}|\}} < \max{\left\{\varepsilon +
    \frac{2\delta}{2\delta + r_{0}}, \frac{\delta^{2}}{2}\right\}} <
2\varepsilon. 
\]
We conclude then that the diameter of $W$ is less than $4 \varepsilon$
finishing the proof of the statement above. 
\end{proof}

Remark also that $A$ in the above Proposition has two more evident
properties: it has non-empty interior, and for every $x \oplus t \in
A$ also  $(-x) \oplus t \in A$. 

\begin{Theo} \label{prop-scd-intersect}
In every Banach space $X$ with the Daugavet property there are convex
closed bounded  SCD sets $A, D \subset X$ whose intersection $A \cap
D$ is not SCD. 
\end{Theo}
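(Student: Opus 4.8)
The plan is to take $D:=-A$, where $A$ is the set from Proposition~\ref{prop:counterexample}, and to show that $A\cap(-A)$ contains $B_E$ in a way that prevents it from being SCD. First I would observe that, since $A$ is SCD and $-A=\{(-x)\oplus(-t):x\oplus t\in A\}$ is a linear image of $A$, the set $D=-A$ is also SCD (SCD is preserved by continuous linear maps, or simply: slices of $A$ become slices of $-A$ under $x\mapsto -x$, and the determining property is preserved). So both $A$ and $D$ are convex, closed, bounded, and SCD. The whole point is the intersection.

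Next I would identify $A\cap D$ explicitly. By property~(a) of Proposition~\ref{prop:counterexample}, every $x\oplus t\in A$ has $t\ge 0$, and symmetrically every element of $D=-A$ has $t\le 0$; hence any element of $A\cap D$ has $t=0$. Conversely, for $x\oplus 0$ we have $\|x\|_0 = \|x\|$, so $x\oplus 0\in A$ iff $\|x\|^2\le 1$, i.e.\ iff $x\in B_E$; and then automatically $-x\in B_E$, so $x\oplus 0\in D$ as well. Therefore $A\cap D = B_E\oplus 0 = \{x\oplus 0 : x\in B_E\}$, which is just an isometric copy of $B_E$ inside $X$ (using that $E\subset X$ is isometrically embedded in its original norm, and $\|x\oplus 0\|_\infty = \|x\|$). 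Since $E$ has the Daugavet property by \cite[Theorem~2.14]{KadSSW}, Lemma~\ref{daug-not-scd} applies to $B_E$: given any sequence of slices $(S_n)$ of $B_E$ and any fixed $x\in S_E$, one can pick $x_n\in S_n$ with $x\notin\spn\{x_n:n\in\N\}$, so in particular $x\notin\cconv\{x_n:n\in\N\}$; thus no sequence of slices of $B_E$ is determining, and $B_E$ — hence $A\cap D$ — is not SCD.

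The one technical point requiring a little care is that a slice of $B_E\oplus 0$ as a subset of $X$ is the same kind of object as a slice of $B_E$: given $y^\ast\oplus\lambda\in X^\ast$, its restriction to $E\oplus 0$ is just $y^\ast\in E^\ast$ (the $\lambda$-coordinate contributes nothing on elements with $t=0$), so slices of $A\cap D$ in $X$ correspond exactly to slices of $B_E$ in $E$. Likewise closed convex hulls taken in $X$ of subsets of $E\oplus 0$ coincide with those taken in $E$. With this identification, the non-SCD conclusion transfers verbatim from $B_E$ to $A\cap D$.

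I expect the main obstacle to be essentially nil once $A$ is in hand — the heavy lifting was already done in Proposition~\ref{prop:counterexample}, whose purpose was precisely to manufacture an SCD set having $B_E$ on its boundary, and the symmetric set $-A$ then pinches $B_E$ out as the intersection. The only thing to be vigilant about is to state clearly why $A\cap D$ is \emph{exactly} $B_E\oplus 0$ (using part~(a) for the inclusion $t=0$ and the elementary computation $\|x\|_0=\|x\|$ for the reverse inclusion) and why ``SCD'' and ``not SCD'' are unaffected by the passage between $E$ and the slice $E\oplus 0\subset X$; after that the argument is a direct invocation of Lemma~\ref{daug-not-scd}.
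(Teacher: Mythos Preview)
Your proposal is correct and follows exactly the paper's approach: take $D=-A$ and observe that $A\cap(-A)=B_E$, which is not SCD since $E$ has the Daugavet property. The paper's proof is a terse three lines relying on the setup preceding Proposition~\ref{prop:counterexample}; you have simply spelled out the details (why $-A$ is SCD, why the intersection is exactly $B_E$, and why slices of $B_E$ in $X$ coincide with slices in $E$) that the paper leaves implicit.
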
 

\begin{proof}
Let $A$ and $E$ be as in Proposition~\ref{prop:counterexample}, 
 and let $D = -A$. Both sets are SCD by
Proposition~\ref{prop:counterexample} although $A \cap D = B_{E}$ is
not. 
\end{proof}

%%%%%%%%%%%%%%%%%%%%%%%%%%%%%%
%%%%%%%%%%%%%%%%%%%%%%%%%%%%%%

\subsection{Sum and union of SCD sets}

For  $B_1, B_2 \subset X$ we denote, as usual, by $B_1+B_2$  the
corresponding Minkowski  sum: $B_1+B_2 = \{b_1 +b_2: b_1 \in B_1$, $b_2
\in B_2\}$. The need to consider various  Minkowski  sums appears in
many instances, in particular in the applications of SCD sets to
operator theory. The first theorem of that kind appeared in
\cite[Corollary~3.9]{SCDsets}, where inheritance of the property  SCD
of two \textbf{spaces} by their  \textbf{direct} sum was
demonstrated. The next step (that has important applications) was done
in  \cite[Theorem~2.1]{SCDsum}: the  \textbf{direct} sum of two
\textbf{hereditarily} SCD sets is a hereditarily SCD set again
(\emph{hereditarily SCD} means that all subsets are SCD). In the same
paper it was demonstrated that in the statement of the latter result
the direct sum cannot be substituted by the Minkowski sum. Namely, in
\cite[Corollary~2.2]{SCDsum} it is demonstrated that the Minkowski sum
of two hereditarily SCD sets need not   be hereditarily
SCD. Unfortunately, the statement of  \cite[Corollary~2.2]{SCDsum} as
it appeared in the paper, viz.\ ``The sum of two hereditarily SCD sets
need not be an SCD set,'' contains a misleading misprint: the second
word ``hereditarily'' is missing.  The construction in
\cite[Corollary~2.2]{SCDsum} consists of two separable RNP subsets $U,
V \subset \ell_1 \oplus_\infty C[0,1]$ such that $U+V \supset
B_{C[0,1]}$ which makes  $U+V$ not  hereditarily SCD. Nevertheless,
\cite[comments after Prop.~1.7]{Schach},   $\overline{U+V}$ is the
closed convex hull of its strongly exposed points, so $U+V$ is
SCD. The authors noted that painful misprint only some years after the
publication (see Editor's comment to Zentralblatt review
\href{https://zbmath.org/?q=an:1210.46010}{Zbl 1210.46010}), and since
then it has remained an open question whether the statement with the
misprint is incidentally also correct.  In this subsection we answer a
related question, demonstrating that the Minkowski sum of two SCD sets
need not be SCD. We also give  an analogous result about unions of SCD
sets. 

At first, remark the following easy properties:

\begin{Lemm} \label{lem-sum-slice} 
Let $B_1, B_2 \subset X$ be non-empty bounded sets and let $x^{\ast}
\in   X^{\ast}$, $\eps > 0$. We then  have the
following properties: 
\begin{enumerate}
\item[(i)] $S(B_1,  x^{\ast}, \varepsilon /2) + S(B_2,  x^{\ast},
  \varepsilon/2) \subset S(B_1 +B_2,  x^{\ast}, \varepsilon)$. 
\item[(ii)] If $a \in B_1$, $ b \in B$ satisfy that $a+b \in S(B_1
  +B_2,  x^{\ast}, \varepsilon)$,  then $a \in S(B_1,  x^{\ast},
  \varepsilon)$ and $b \in S(B_2,  x^{\ast}, \varepsilon)$. 
\end{enumerate}
\end{Lemm}

The above Lemma and   Lemma~\ref{len-H-B-scd}  imply the following result.

\begin{Lemm}\label{Lemm:sumSCDcharact}
Let $B_1, B_2 \neq \emptyset$ be bounded subsets of a Banach space $X$. Then 
the following assertions are equivalent:
\begin{enumerate}
\item[(a)]  
$B_1 +B_2$ is SCD.  
\item[(b)] 
There exists a countable family  $(x_{n}^{\ast}, \varepsilon_{n}) \in
 X^{\ast}  \times (0,+\infty)$ satisfying that for
every $(x^{\ast}, \varepsilon) \in    X^{\ast} \times
(0,+\infty)$ there is an $m \in \N$ such that 
\begin{equation} 
\label{equa:sum of slices}
 S(B_1,  x_{m}^{\ast}, \varepsilon_{m}) \subset S(B_1,  x^{\ast},
 \varepsilon) \hspace{3mm} \mbox{ and } \hspace{3mm} S(B_2,
 x_{m}^{\ast}, \varepsilon_{m}) \subset S(B_2,  x^{\ast},
 \varepsilon). 
\end{equation} 
\end{enumerate}
\end{Lemm}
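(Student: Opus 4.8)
The plan is to combine Lemma~\ref{len-H-B-scd} (the Hahn--Banach characterization of determining sequences) with Lemma~\ref{lem-sum-slice}. First I would prove the implication (b)$\Rightarrow$(a). Given a countable family $(x_n^\ast,\eps_n)$ as in (b), I consider the countable family of slices $V_n := S(B_1, x_n^\ast, \eps_n/2) + S(B_2, x_n^\ast, \eps_n/2)$ of $B_1+B_2$; by Lemma~\ref{lem-sum-slice}(i) each $V_n$ is contained in the slice $S(B_1+B_2, x_n^\ast, \eps_n)$, but in fact it is more convenient to take directly $V_n := S(B_1+B_2, x_n^\ast, \eps_n)$ and note that $V_n \supset S(B_1, x_n^\ast, \eps_n/2) + S(B_2, x_n^\ast, \eps_n/2)$. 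To verify property $(*)$ of Lemma~\ref{len-H-B-scd} for $B_1+B_2$, I take an arbitrary slice $S(B_1+B_2, x^\ast, \eps)$; applying (b) to the pair $(x^\ast, \eps/2)$ gives an $m$ with $S(B_i, x_m^\ast, \eps_m) \subset S(B_i, x^\ast, \eps/2)$ for $i=1,2$, and then by Lemma~\ref{lem-sum-slice}(i),
\[
S(B_1, x_m^\ast, \eps_m) + S(B_2, x_m^\ast, \eps_m) \subset S(B_1, x^\ast, \eps/2) + S(B_2, x^\ast, \eps/2) \subset S(B_1+B_2, x^\ast, \eps).
\]
Since the left-hand side is contained in $V_m = S(B_1+B_2, x_m^\ast, \eps_m)$... here I must be slightly careful: I actually want a $V_m$ that lies inside the given slice, so I should set $V_m := S(B_1, x_m^\ast, \eps_m) + S(B_2, x_m^\ast, \eps_m)$ from the start (a nonempty subset of $B_1+B_2$, though not literally a slice of it). The displayed inclusion then shows $V_m \subset S(B_1+B_2, x^\ast, \eps)$, so $\{V_n\}$ is determining by Lemma~\ref{len-H-B-scd}, whence $B_1+B_2$ is SCD.

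For (a)$\Rightarrow$(b), suppose $B_1+B_2$ is SCD and fix a determining sequence of slices $S(B_1+B_2, x_n^\ast, \eps_n)$. I claim the family $(x_n^\ast, \eps_n)$ works for (b). Let $(x^\ast,\eps)$ be given. The slice $S(B_1+B_2, x^\ast, \eps)$ is nonempty, and by Lemma~\ref{len-H-B-scd} it contains some $S(B_1+B_2, x_m^\ast, \eps_m)$. Pick any $a+b$ in this latter slice with $a\in B_1$, $b\in B_2$; more to the point, I need to show $S(B_i, x_m^\ast, \eps_m) \subset S(B_i, x^\ast, \eps)$. Take $a \in S(B_1, x_m^\ast, \eps_m)$ and any $b \in S(B_2, x_m^\ast, \eps_m)$ (nonempty since it contains an approximate maximizer); then $a+b \in S(B_1, x_m^\ast, \eps_m) + S(B_2, x_m^\ast, \eps_m) \subset S(B_1+B_2, x_m^\ast, \eps_m) \subset S(B_1+B_2, x^\ast, \eps)$, and Lemma~\ref{lem-sum-slice}(ii) yields $a \in S(B_1, x^\ast, \eps)$. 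Symmetrically $S(B_2, x_m^\ast, \eps_m) \subset S(B_2, x^\ast, \eps)$, which is exactly \eqref{equa:sum of slices}.

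The only genuinely delicate point — and the one I would flag as the main obstacle — is the bookkeeping with the $\eps$'s and the fact that $S(B_1, x_m^\ast, \eps_m) + S(B_2, x_m^\ast, \eps_m)$ is a subset of $B_1+B_2$ that need not itself be a slice of $B_1+B_2$; this is why one invokes Lemma~\ref{len-H-B-scd} in the ``subset'' form rather than insisting on slices. Everything else is a routine application of the two preceding lemmas, so the proof is short. One should also note at the outset that $B_1+B_2$ is bounded (as the algebraic sum of two bounded sets), so that Definition~\ref{defSCDsets} and Lemma~\ref{len-H-B-scd} apply.
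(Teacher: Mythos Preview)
Your overall strategy matches the paper's exactly: both directions are obtained by shuttling between Lemma~\ref{len-H-B-scd} and the two parts of Lemma~\ref{lem-sum-slice}. However, there are two small but genuine gaps in your bookkeeping, one in each direction.

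In (b)$\Rightarrow$(a) you end by saying that the sets $V_n := S(B_1,x_n^\ast,\eps_n)+S(B_2,x_n^\ast,\eps_n)$ are determining, ``whence $B_1+B_2$ is SCD''. That inference is not valid: the definition of SCD requires a determining sequence of \emph{slices}, and Lemma~\ref{len-H-B-scd} only tells you when a given sequence of subsets is determining---it does not upgrade subsets to slices. The fix is immediate once you notice that Lemma~\ref{lem-sum-slice}(ii) actually gives $S(B_1+B_2,x_n^\ast,\eps_n)\subset V_n$; hence the genuine slices $S(B_1+B_2,x_n^\ast,\eps_n)$ inherit property~($*$) from the $V_n$ and form the required determining sequence. (This is exactly what the paper does: it works with the slices $S(B_1+B_2,x_n^\ast,\eps_n)$ directly and uses Lemma~\ref{lem-sum-slice}(ii) first, then \eqref{equa:sum of slices}, then Lemma~\ref{lem-sum-slice}(i).)

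In (a)$\Rightarrow$(b) you write $S(B_1,x_m^\ast,\eps_m)+S(B_2,x_m^\ast,\eps_m)\subset S(B_1+B_2,x_m^\ast,\eps_m)$, but Lemma~\ref{lem-sum-slice}(i) only gives containment in $S(B_1+B_2,x_m^\ast,2\eps_m)$, so the chain of inclusions breaks. The paper absorbs this factor of~$2$ at the outset by writing the determining slices as $S(B_1+B_2,x_n^\ast,2\eps_n)$ and taking $(x_n^\ast,\eps_n)$ as the family for (b); equivalently you could keep your determining slices and declare the family to be $(x_n^\ast,\eps_n/2)$. You even flagged the $\eps$-bookkeeping as the delicate point---it just needs that one halving to go through.
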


\begin{proof}
(a) $\Rightarrow$ (b): Let $S_n = S(B_1 +B_2,  x^*_n, 2\eps_n)$ with
$(x_{n}^{\ast}, \varepsilon_{n}) \in    X^{\ast}
\times (0,+\infty)$,   $n \in \N$,  be  
slices of $B_1 +B_2$ which form a  determining sequence. Let us
demonstrate that  $(x_{n}^{\ast}, \varepsilon_{n})$ form the sequence
we need for (b). Indeed, according to   Lemma~\ref{len-H-B-scd}  for
every $(x^{\ast}, \varepsilon) \in   X^{\ast}  \times
(0,+\infty)$ there is $m \in \N$ such that $S_m \subset S(B_1 +B_2,
x^*, \eps)$, and by (i) of Lemma~\ref{lem-sum-slice} also $ S(B_1,
x^*_m, \eps_m) +  S(B_2,  x^*_m, \eps_m)   \subset S(B_1 +B_2,  x^*,
\eps)$. An application of  (ii) of Lemma~\ref{lem-sum-slice} gives us
the desired inclusions \eqref{equa:sum of slices}. 

(b) $\Rightarrow$ (a):  Assume  $(x_{n}^{\ast}, \varepsilon_{n}) \in
 X^{\ast}  \times (0,+\infty)$ are from (b), and let
us demonstrate that the slices  $S_n = S(B_1 +B_2,  x^*_n, \eps_n)$
form  a determining sequence of slices for $B_1 +B_2$. Fix a slice
$S(B_1 +B_2,  x^*, 2\eps)$ with $x^* \in X^* \setminus \{0\}$, $\eps >
0$  and, using (b), select  an $m$ for which \eqref{equa:sum of
  slices} takes place. We are going to demonstrate that $ S(B_1 +B_2,
x^*_m, \eps_m) \subset  S(B_1 +B_2,  x^*, 2\eps)$. Indeed, let $x \in
S(B_1 +B_2,  x^*_m, \eps_m) $ be an arbitrary element. Then it is of
the form $x = a+b$, $a \in B_1$, $b \in B_2$, and, by  (ii) of
Lemma~\ref{lem-sum-slice},  $a \in S(B_1,  x^*_m, \eps_m) $,  $b \in
S(B_2,  x^*_m, \eps_m) $.  It remains to apply  (i) of
Lemma~\ref{lem-sum-slice}:  
\[ 
\begin{split}
x = a+b &\subset S(B_1,  x^*_m, \eps_m)  + S(B_2,  x^*_m, \eps_m)  \\ 
&\subset S(B_1,  x^*, \eps)  + S(B_2,  x^*, \eps)  \subset S(B_1 +B_2,
x^{\ast}, 2\varepsilon).  \qedhere
\end{split}
\]         
\end{proof} 
 
The above lemma leads to the following result.

\begin{Theo} \label{theo:SumSCDimplies-summandsSCD}
Let $B_{1}, B_2$ be non-empty bounded subsets of a Banach space $X$
such that $B_1 + B_2$ is SCD. Then,  $B_{1}$ (and so also $B_{2}$) is
SCD.  
\end{Theo}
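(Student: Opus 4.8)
The plan is to obtain the result as a direct consequence of Lemma~\ref{Lemm:sumSCDcharact} combined with the Hahn--Banach reformulation of being ``determining'' provided by Lemma~\ref{len-H-B-scd}. Since $B_1+B_2$ is SCD, Lemma~\ref{Lemm:sumSCDcharact} furnishes a countable family $(x_n^\ast,\eps_n)\in X^\ast\times(0,+\infty)$, $n\in\N$, such that for every $(x^\ast,\eps)\in X^\ast\times(0,+\infty)$ there is $m\in\N$ with
\[
S(B_1,x_m^\ast,\eps_m)\subset S(B_1,x^\ast,\eps)
\qquad\text{and}\qquad
S(B_2,x_m^\ast,\eps_m)\subset S(B_2,x^\ast,\eps).
\]
Only the first of these two inclusions is needed to handle $B_1$.

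I would then set $V_n:=S(B_1,x_n^\ast,\eps_n)$ for $n\in\N$. Each $V_n$ is a non-empty subset of $B_1$: for $\eps_n>0$ the supremum $\sup_{b\in B_1}x_n^\ast(b)$ is approximated from below by values of $x_n^\ast$ on $B_1$, so $\{b\in B_1:\ x_n^\ast(b)>\sup_{b'\in B_1}x_n^\ast(b')-\eps_n\}\neq\emptyset$ (and if $x_n^\ast=0$ then $V_n=B_1$, a degenerate slice, which is allowed in the definition of an SCD set). Hence $(V_n)_{n\in\N}$ is a legitimate sequence of slices of $B_1$. Given any slice $S(B_1,x^\ast,\eps)$ of $B_1$, applying the displayed property to the pair $(x^\ast,\eps)$ yields an $m$ with $V_m\subset S(B_1,x^\ast,\eps)$; thus $(V_n)$ has property $(\ast)$ of Lemma~\ref{len-H-B-scd}, so it is determining for $B_1$, and therefore $B_1$ is SCD.

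Finally, since $B_1+B_2=B_2+B_1$, running the same argument with the roles of $B_1$ and $B_2$ interchanged shows that $B_2$ is SCD too. There is essentially no serious obstacle here beyond correctly invoking Lemma~\ref{Lemm:sumSCDcharact}; the only point that needs a moment's care is checking that the sets $S(B_1,x_n^\ast,\eps_n)$ are honest (non-empty, possibly degenerate) slices of $B_1$, so that Lemma~\ref{len-H-B-scd} is applicable to them.
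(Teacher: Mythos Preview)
Your proof is correct and follows exactly the paper's approach: apply Lemma~\ref{Lemm:sumSCDcharact} to obtain the family $(x_n^\ast,\eps_n)$ and observe that the slices $S(B_1,x_n^\ast,\eps_n)$ are determining for $B_1$. You have simply made explicit (via Lemma~\ref{len-H-B-scd} and the non-emptiness check) what the paper states in one sentence.
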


\begin{proof}
Let $(x_{n}^{\ast}, \varepsilon_{n}) \in    X^{\ast}
\times (0,+\infty)$ be the family from (b) of
Lemma~\ref{Lemm:sumSCDcharact}, then the slices $S(B_1, x_{m}^{\ast},
\varepsilon_{m})$ form a determining sequence for $B_1$. 
\end{proof} 

 The next proposition explains some difficulties that arise when one
 has to demonstrate that a non-convex set is SCD. 

\begin{Prop} \label{Prop-non-conv-det-ex} 
There are non-convex non-SCD sets containing 
 a determining sequence of relatively weakly open subsets. Such
 examples exist in every Banach space with the Daugavet property. 
\end{Prop}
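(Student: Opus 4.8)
The plan is to build the example inside a separable space $X$ with the Daugavet property, using the same machinery as in Proposition~\ref{prop:counterexample}. Recall that we have a one-codimensional subspace $E\subset X$ with the Daugavet property and the SCD set $A=\{x\oplus t:\|x\|_t^2+3t^2\le1,\ t\ge0\}$ whose ``lower face'' $A\cap\{t=0\}=B_E$ is not SCD (in fact not even aSCD). The set I would take is $C:=A\cup(-A)$, or a close variant thereof. Since $A$ and $-A$ are both closed, convex and SCD, and since $A\cap(-A)=B_E$ lies in the common boundary, $C$ is a compact-looking ``double cap'' glued along $B_E$. The first step is to check that $C$ is not convex (it is not, because $e_0/\sqrt3$ and $-e_0/\sqrt3$ lie in $C$ but their midpoint $0$ need not be an interior-type point — more carefully, one picks a point of $B_E$ and a point like $e_0/2$; the segment between them leaves $C$) and, crucially, that $C$ is not SCD: any determining sequence of slices of $C$ would, by Lemma~\ref{len-H-B-scd}, have the property that every slice of $C$ contains one of them; restricting to slices of $C$ supported by functionals of the form $y^*\oplus0$ that peak on $B_E$, one recovers (relative) slices of $B_E$, and the anti-SCD property of $B_E$ from Lemma~\ref{daug-not-scd} produces, for any such sequence, a set $B$ hitting all the chosen slices with $B_E\not\subset\overline{\operatorname{conv}}(B)$, hence $C\not\subset\overline{\operatorname{conv}}(B)$.

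Next I would construct the \emph{determining sequence of relatively weakly open subsets} of $C$. The idea is that away from $B_E$, i.e. on the open-$t$ parts of $A$ and of $-A$, the set looks like the SCD sets $A$ and $-A$, so relatively weakly open subsets there are well-behaved; the only ``bad'' region is the seam $B_E=\{t=0\}$, and there the geometry rescues us: a relatively weakly open subset $W$ of $C$ that meets $B_E$ must, because of the double-cone shape, contain points with $t>0$ \emph{and} points with $t<0$, i.e. $W$ meets both $\operatorname{int}A$ and $\operatorname{int}(-A)$ in sets that are themselves relatively weakly open in $A$ and in $-A$ respectively. Concretely: take a countable $\pi$-basis-type family $\{W_n^+\}$ of relatively weakly open subsets of $A$ that is determining for $A$ (this exists since $A$ is SCD, via Bourgain's theorem — Proposition~\ref{suff-cond-scd} and the remark after it, which lets us pass from slices to relatively weakly open sets for convex sets), likewise $\{W_n^-\}$ for $-A$; shrink each $W_n^\pm$ so it avoids a neighbourhood of the seam when its ``support'' point has $t\ne0$, and when the relevant point is in $B_E$, replace the pair by the genuinely relatively-weakly-open-in-$C$ set obtained as the union of the two traces. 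The claim to verify is that $\{W_n^+\cup W_n^-\}$ (suitably indexed, together with the pieces coming from $B_E$) is determining for $C$: given any relatively weakly open $W\subset C$, if $W$ misses $B_E$ it lies entirely in one cap and we use the determining family there; if $W$ meets $B_E$, then $W\cap\operatorname{int}A$ and $W\cap\operatorname{int}(-A)$ are both nonempty relatively weakly open, so each contains a member of the corresponding $\pi$-basis, and one arranges the indexing so the matched pair sits inside $W$.

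For the ``such examples exist in every Banach space with the Daugavet property'' part, nothing extra is needed: the entire construction was carried out in an arbitrary such $X$ (after the harmless reduction to the separable case via \cite[Theorem~4.5]{KadSW2}), so the proof is uniform. I would close by remarking, as the paper's narrative promises, that this shows the converse direction in Bourgain's reduction genuinely fails for non-convex sets: possessing a determining sequence of relatively weakly open subsets is strictly weaker than SCD once convexity is dropped.

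The main obstacle I anticipate is the bookkeeping at the seam: one has to be sure that a relatively weakly open subset $W\subset C$ containing a point $x_0\in B_E$ really does intersect \emph{both} open caps in relatively weakly open sets, and that the two ``halves'' can be matched up by a single index so that their union sits inside $W$. This requires the geometric fact that near a boundary point of $B_E$ the set $C$ locally contains a genuine two-sided cone in the $t$-direction — which follows from $A$ (and $-A$) having nonempty interior and from the explicit description in parts (a),(b) of Proposition~\ref{prop:counterexample} — together with a careful choice of the countable families $\{W_n^\pm\}$ so that they are compatible across the seam (e.g. by taking them from a fixed countable base for the weak topology on $B_X$ restricted appropriately). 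Everything else — non-convexity, failure of SCD via Lemma~\ref{daug-not-scd}, and the behaviour away from the seam — is routine given the results already established.
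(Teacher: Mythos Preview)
Your proposed example $C=A\cup(-A)$ does not work, because this set is in fact \emph{convex}. To see this, take $a=x_1\oplus t_1\in A$ with $t_1>0$ and $c\in -A$, say $c=(-x_2)\oplus(-t_2)$ with $x_2\oplus t_2\in A$, $t_2>0$. The segment $[a,c]$ meets the hyperplane $\{t=0\}$ at the point $p=\bigl(\tfrac{t_2x_1-t_1x_2}{t_1+t_2}\bigr)\oplus 0$, and since $\|x_i\|\le\sqrt{1-3t_i^2}\le 1$ we get $\|p\|\le 1$, i.e.\ $p\in B_E=A\cap(-A)$. Hence $[a,p]\subset A$ and $[p,c]\subset -A$ by convexity of $A$ and $-A$, so $[a,c]\subset C$. (Your suggested witness for non-convexity, a point of $B_E$ and a point like $e_0/2$, lies entirely in the convex set $A$.) Once $C$ is convex, the whole strategy collapses: by the very remark preceding this proposition, for convex sets the existence of a determining sequence of relatively weakly open subsets is equivalent to being SCD, so you cannot separate the two notions on $C$. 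In fact $C$ is even SCD, since the points of $\tilde A\cup(-\tilde A)$ (notation from the proof of Proposition~\ref{prop:counterexample}(e)) have small relatively weakly open neighbourhoods in $C$ and their closed convex hull is $C$; so your sketched ``non-SCD'' argument via Lemma~\ref{daug-not-scd} must also break down.

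The paper takes a completely different and much shorter route. It works in $E\oplus_\infty\R$ and chooses a \emph{countable} set $U=\{x_n\oplus t_n:n\in\N\}$ with each $t_n\in(0,1)$, the even and odd subsequences of $(x_n)$ both dense in $B_E$, $t_{2n}\to0$ and $t_{2n+1}\to1$. This $U$ is weakly discrete, so the singletons $\{x_n\oplus t_n\}$ are relatively weakly open and trivially determining; but $\cconv(U)=B_E\oplus[0,1]$, which is not SCD by Theorem~\ref{theo:SumSCDimplies-summandsSCD}, hence $U$ is not SCD by Lemma~\ref{Prop:SCDconvexhull}. No ``seam bookkeeping'' or Bourgain-type $\pi$-basis argument is needed.
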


\begin{proof}
Let $X$ be a space with the Daugavet property (as before it can be
assumed separable), $E$ be a 1-codimensional closed subspace. Then $X$
is isomorphic to $E \oplus_\infty \R$. Take a sequence $(x_n)$
in the unit ball 
of $E$ such that both subsequences $(x_{2n})_{n \in \N}$ and $(x_{2n-1})_{n \in \N}$ are dense and a sequence of $ t_n \in (0,1)$ such that $t_{2n} \to 0$
and $t_{2n+1} \to 1$. The set in question will be the following subset
of $E \oplus_\infty \R$:
$$
U = \{x_n \oplus t_n: n \in \N\}.
$$
This set is quickly seen to be discrete in the weak topology, 
so  $\left\{\{x_n \oplus t_n\}\colon  n \in \N \right\}$ is the requested
determining sequence 
of relatively weakly open subsets. On the other hand the closed convex hull
of $U$ equals $B_E \oplus [0, 1]$ which, according to
Theorem~\ref{theo:SumSCDimplies-summandsSCD},  is not SCD because the
unit ball of $E$  is not SCD. 
\end{proof}

Now we are ready for the first main result of the subsection
demonstrating that the converse to
Theorem~\ref{theo:SumSCDimplies-summandsSCD} is not true. 

\begin{Theo}\label{Prop:sumNotSCD}
In every Banach space $X$ with the Daugavet property there are convex
closed bounded  SCD sets $A, D \subset X$ whose sum $A+D$ is not SCD. 
\end{Theo}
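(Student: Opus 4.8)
The plan is to reuse the set $A$ from Proposition~\ref{prop:counterexample} together with $D=-A$, exactly as in Theorem~\ref{prop-scd-intersect}, and to show that their Minkowski sum $A+(-A)$ fails to be SCD. Both $A$ and $D=-A$ are convex, closed, bounded and SCD by Proposition~\ref{prop:counterexample}, so the only thing to verify is that $A-A$ is not SCD. The natural strategy is to produce inside $A-A$ a translate (or a scalar multiple) of $B_E$ sitting in a position where it obstructs SCD, and then invoke Theorem~\ref{theo:SumSCDimplies-summandsSCD} or Lemma~\ref{Prop:SCDconvexhull} together with the ``anti-SCD'' Lemma~\ref{daug-not-scd} for $E$ (which has the Daugavet property).

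Concretely, the key computation is to identify $A-A$, or at least a large enough slice/face of it, with something containing $B_E\oplus\{0\}$ up to a positive factor. Recall $A=\{x\oplus t:\|x\|_t^2+3t^2\le 1,\ t\ge 0\}$, and by parts~(a) and~(b) of Proposition~\ref{prop:counterexample} the ``$t=0$ slice'' of $A$ is exactly $B_E\oplus\{0\}$ (both the upper and lower estimates give $\|x\|\le 1$ when $t=0$). Therefore $B_E\oplus\{0\} = (B_E\oplus\{0\})-\{0\}\subset A-A$, and in fact, since $0\oplus t_0\in A$ for small $t_0>0$ while points $x\oplus 0$ with $\|x\|$ close to $\sqrt{(1-3t_0^2)/(1+t_0^2)}$ lie in $A$ by~(b), one even gets an open ``slab'' of the form $rB_E\oplus(-\delta,\delta)\subset A-A$ for suitable $r>0$. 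The cleanest route is: show $A-A\supset rB_E\oplus\{0\}$ for some $r>0$ (here $r=1$ works, using the $t=0$ face), hence $A-A\supset rB_E\oplus\{0\}$; then note that this copy of $rB_E$ is a \emph{face} of $A-A$ because it is supported by the functional $0\oplus 1$ (every $x\oplus t\in A$ has $t\ge 0$, so every $x\oplus t\in A-A$ has the $e_0$-coordinate lying in a symmetric interval around $0$, and the extreme value from below, which is $-1/\sqrt3$, is attained precisely on $A-A\cap\{t=-1/\sqrt3\}$ — alternatively simply use that the minimal $e_0$-coordinate over $A$ is $0$, attained on $B_E\oplus\{0\}$, so over $A-A$ one extreme face of $A-A$ in the $e_0$-direction is $(B_E\oplus\{0\})-(\text{top face of }A)$). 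Either way one exhibits a face of $A-A$ which is a positive multiple of $B_E$.

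The main obstacle is organizing the face/slice argument so that SCD of $A-A$ would force SCD of this copy of $B_E$. The clean way is: a slice of $A-A$ in the direction $\pm(0\oplus 1)$ of the appropriate sign cuts off a set whose closed convex hull contains $rB_E\oplus\{s\}$ for some fixed $s$; by Lemma~\ref{len-H-B-scd} a determining sequence of slices of $A-A$ would, after intersecting with this slice and projecting onto $E$, give a determining sequence (possibly after shrinking, exactly as in the proof of Lemma~\ref{Prop:SCDconvexhull}) of relatively weakly open subsets, hence of slices by Bourgain's theorem, for $rB_E$ — contradicting that $B_E$ is not SCD by Lemma~\ref{daug-not-scd}. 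Since this ``restricting SCD to a face'' bookkeeping is precisely what Theorem~\ref{theo:SumSCDimplies-summandsSCD} was designed to bypass, the slickest proof is: write $A-A = A+(-A)$, and observe that by Theorem~\ref{theo:SumSCDimplies-summandsSCD} it suffices to find \emph{any} non-SCD set $B_2$ with $B_1+B_2=A-A$ for some bounded $B_1$ — but more simply, show directly that $B_E\oplus\{0\}$ is a summand-free obstruction by checking $A-A \supset B_E\oplus\{0\}$ \emph{and} that $B_E\oplus\{0\}$ is exposed in $\overline{\conv}(A-A)=A-A$, then apply the standard fact (used implicitly throughout, e.g.\ in Lemma~\ref{Prop:SCDconvexhull}) that a slice of an SCD set is SCD, contradicting Lemma~\ref{daug-not-scd}.

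I would therefore present the proof as follows: take $A$ and $E$ as in Proposition~\ref{prop:counterexample} and set $D=-A$; both are SCD, closed, convex, bounded. Compute that the face of $A$ in the direction $-e_0$ (i.e.\ minimizing the $e_0$-coordinate) is $\{x\oplus 0:\|x\|\le 1\}=B_E\oplus\{0\}$ using~(a) and~(b). Deduce that the face of $A-A=A+(-A)$ in the direction $-e_0$ equals $(B_E\oplus\{0\}) + (\text{face of }-A\text{ in direction }-e_0)$; the face of $-A$ in direction $-e_0$ is a single point (the image under $x\mapsto -x$ of the top face of $A$, and the top of $A$ in the $e_0$-direction is the single point $0\oplus \tfrac1{\sqrt3}$), so this face of $A-A$ is a genuine translate $B_E\oplus\{-\tfrac1{\sqrt3}\}$. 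A face is in particular the closed convex hull of a slice's limit, so by Lemma~\ref{len-H-B-scd} plus the shrinking trick from Lemma~\ref{Prop:SCDconvexhull} (or by Theorem~\ref{theo:SumSCDimplies-summandsSCD} applied to the summands of this translate of $B_E$), SCD of $A-A$ would imply SCD of $B_E$, contradicting Lemma~\ref{daug-not-scd}. Hence $A+D$ is not SCD, and since $X$ has the Daugavet property $E$ genuinely fails SCD, completing the proof.
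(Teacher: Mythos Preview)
Your proposal has a genuine gap: the central claim that ``SCD of $A-A$ would imply SCD of $B_E$'' via the face argument is false, and the very set $A$ you are using furnishes the counterexample. Indeed, $B_E\oplus\{0\}$ is the face of $A$ in the direction $-e_0$ (exactly by the computation you do), $A$ is SCD by Proposition~\ref{prop:counterexample}, yet $B_E$ is \emph{not} SCD. So SCD does not pass to faces, and no amount of ``bookkeeping'' with Lemma~\ref{len-H-B-scd} or the shrinking trick of Lemma~\ref{Prop:SCDconvexhull} can repair this: whatever argument you propose would apply verbatim to the pair $(A,\,B_E\oplus\{0\})$ and yield a false conclusion. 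The invocation of Theorem~\ref{theo:SumSCDimplies-summandsSCD} does not help either, since you never exhibit a decomposition $A-A=B_1+B_2$ with a non-SCD summand; the obvious decomposition $A+(-A)$ has both summands SCD.

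The paper's proof takes a different route that avoids faces entirely. It uses the characterisation of SCD for a sum given in Lemma~\ref{Lemm:sumSCDcharact}: if $A+D$ were SCD, there would be a countable family $(x_n^{\ast},\varepsilon_n)$ such that for every $(x^{\ast},\varepsilon)$ some $m$ satisfies \emph{simultaneously} $S(A,x_m^{\ast},\varepsilon_m)\subset S(A,x^{\ast},\varepsilon)$ and $S(D,x_m^{\ast},\varepsilon_m)\subset S(D,x^{\ast},\varepsilon)$. Writing $x_n^{\ast}=y_n^{\ast}\oplus\lambda_n$, one tests this against a functional $y^{\ast}\oplus 0$ coming from a slice of $B_E$ that the $y_n^{\ast}$ cannot capture (such a slice exists because $B_E$ is not SCD). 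The point is that, depending on the sign of $\lambda_k$, one of the two sets $A$, $D=-A$ has its supremum of $x_k^{\ast}$ attained on $B_E$, so the corresponding slice of $A$ or $D$ restricts to $S(B_E,y_k^{\ast},\varepsilon_k)$; the simultaneous inclusion then forces $S(B_E,y_k^{\ast},\varepsilon_k)\subset S(B_E,y^{\ast},\delta)$, a contradiction. The use of \emph{both} $A$ and $-A$ at once---so that whichever sign $\lambda_k$ has, one of them pins the slice down to $B_E$---is precisely the idea your face argument is missing.
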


\begin{proof}  We will use the same sets $A, D \subset X$  as in
  Theorem~\ref{prop-scd-intersect}: 
\begin{align*} 
 A &= \{ x \oplus t \colon \| x\|_{t} \le 1 - 3 t^{2}, \ t \ge 0 \}, \\
 D &= -A = \{ x \oplus t \colon \| x\|_{t} \le 1 - 3 t^{2}, \ t \le 0 \},
\end{align*}
whose intersection is $B_E$. It has  already been shown in
Proposition~\ref{prop:counterexample} 
that $A$ and $D$ are SCD.  

To see that the sum $A+D$ is not SCD we will argue by
contradiction. If we assume that $A+D$ is SCD then we could find a
countable family $(x_{n}^{\ast}, \varepsilon_{n}) \in S_{X^{\ast}}
\times (0,1)$ as in Lemma~\ref{Lemm:sumSCDcharact}. Notice that we can
write $x_{n}^{\ast}=y_{n}^{\ast} \oplus \lambda_{n} \in X^{\ast} =
E^{\ast} \oplus \mathbb{R}$. Since $B_E$ is not SCD  we can find
$y^{\ast} \in S_{E^{\ast}}$ and $\delta \in (0,1)$ such that for every
$ n \in \N$ 
%for which $y_{n}^{\ast} \neq 0$ 
\begin{equation}\label{equa:auxSumSCDcharact1} 
S(B_{E}, y_{n}^{\ast}, \varepsilon_{n}) \not\subset S(B_{E}, y^{\ast}, \delta).
\end{equation}
Considering the element $x^{\ast}=y^{\ast} \oplus 0 \in B_{X^{\ast}}$
we have that there is $k \in \N$ satisfying 
 \[
S(A, x_{k}^{\ast}, \varepsilon_{k}) \subset S(A, x^{\ast}, \delta)
\hspace{3mm} \mbox{ and } \hspace{3mm} S(D, x_{k}^{\ast},
\varepsilon_{k}) \subset S(D, x^{\ast}, \delta) 
\]
from which  it easily follows that
\begin{equation}\label{equa:auxSumSCDcharact2}
S(A, x_{k}^{\ast}, \varepsilon_{k}) \cup S(D, x_{k}^{\ast},
\varepsilon_{k}) \subset \{ x \oplus t \in X \colon x \in S(B_{E},
y^{\ast}, \delta) \}.   
\end{equation}
%The last inclusion implies in particular that  $y_{n}^{\ast} \neq 0$,
%because otherwise the left hand side would contain an element of the
%form $0 \oplus t$, but the right hand side does not contain  such
%elements.  
We now claim that 
\begin{equation}\label{equa:auxSumSCDcharact3}
S(B_{E}, y_{k}^{\ast}, \varepsilon_{k})  \subset S(A, x_{k}^{\ast},
\varepsilon_{k}) \cup S(D, x_{k}^{\ast}, \varepsilon_{k}) 
\end{equation}
which together with \eqref{equa:auxSumSCDcharact2} leads to 
\[ S(B_{E}, y_{k}^{\ast}, \varepsilon_{k}) \subset S(B_{E}, y^{\ast},
\delta), \] 
contradicting \eqref{equa:auxSumSCDcharact1} and finishing the
proof. To show the validity of the claim we distinguish two
cases. Assuming that $\lambda_{k} \le 0$ we get that
$\sup\{x_{k}^{\ast}(a): a \in A\} = \sup\{y_{k}^{\ast}(x): x \in B_E\}
= \| y_{k}^{\ast}\|$. As a consequence,  
$S(B_{E}, y_{k}^{\ast}, \varepsilon_{k}) \subset S(A, x_{k}^{\ast},
\varepsilon_{k})$. 
On the other hand, if $\lambda_{k} \ge 0$ then analogously
$\sup\{x_{k}^{\ast}(d): d \in D\} = \sup\{y_{k}^{\ast}(x): x \in
B_E\}$  and therefore 
$S(B_{E}, y_{k}^{\ast}, \varepsilon_{k}) \subset S(D, x_{k}^{\ast},
\varepsilon_{k})$. 
\end{proof}

Nevertheless, for the direct sum of SCD sets the situation remains
simple (for hereditarily SCD sets that was proved earlier in
\cite[Theorem~2.1]{SCDsum}). 

\begin{Theo} \label{Obse:summandsSCDimpliesDirectSumSCD}
Let $B_{1} \subset X_1$, $B_{2} \subset X_2$ be bounded subsets of a
Banach space $X = X_1 \oplus X_2$, and suppose that $B_1, B_2$ are
SCD.  Then $B_1 + B_2$ is SCD. 
\end{Theo}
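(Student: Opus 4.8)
The plan is to use the characterization of SCD sums given in Lemma~\ref{Lemm:sumSCDcharact}. Since $X = X_1 \oplus X_2$ is a direct sum, a functional $x^\ast \in X^\ast$ decomposes as $x^\ast = x_1^\ast \oplus x_2^\ast$ with $x_i^\ast \in X_i^\ast$, and a slice of $B_1 + B_2$ ``splits'': by Lemma~\ref{lem-sum-slice} one has $S(B_1, x^\ast, \eps/2) + S(B_2, x^\ast, \eps/2) \subset S(B_1+B_2, x^\ast, \eps)$ and conversely every element of $S(B_1+B_2, x^\ast, \eps)$ decomposes into a sum of elements from $S(B_1, x^\ast, \eps)$ and $S(B_2, x^\ast, \eps)$. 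The key extra observation for the direct sum is that $\sup_{a \in B_1} x^\ast(a) = \sup_{a \in B_1} x_1^\ast(a)$ (since $x^\ast$ acts on $B_1 \subset X_1$ only through its first coordinate), so that $S(B_1, x^\ast, \eps) = S(B_1, x_1^\ast, \eps)$, and similarly $S(B_2, x^\ast, \eps) = S(B_2, x_2^\ast, \eps)$. Thus slices of $B_1 + B_2$ are governed \emph{independently} by slices of $B_1$ (via the first coordinate of the functional) and slices of $B_2$ (via the second coordinate).

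First I would fix determining sequences for $B_1$ and $B_2$: by Lemma~\ref{len-H-B-scd}, there are countable families $(u_n^\ast, \delta_n) \in X_1^\ast \times (0,\infty)$ and $(v_m^\ast, \eta_m) \in X_2^\ast \times (0,\infty)$ such that every slice of $B_1$ contains some $S(B_1, u_n^\ast, \delta_n)$ and every slice of $B_2$ contains some $S(B_2, v_m^\ast, \eta_m)$. Then I would form the countable family indexed by pairs $(n,m)$, setting $x_{(n,m)}^\ast := u_n^\ast \oplus v_m^\ast \in X^\ast$ and $\eps_{(n,m)} := \min\{\delta_n, \eta_m\}$. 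I claim this family witnesses condition (b) of Lemma~\ref{Lemm:sumSCDcharact}. Indeed, given any $(x^\ast, \eps) \in X^\ast \times (0,\infty)$, write $x^\ast = x_1^\ast \oplus x_2^\ast$; choose $n$ with $S(B_1, u_n^\ast, \delta_n) \subset S(B_1, x_1^\ast, \eps) = S(B_1, x^\ast, \eps)$ and $m$ with $S(B_2, v_m^\ast, \eta_m) \subset S(B_2, x_2^\ast, \eps) = S(B_2, x^\ast, \eps)$. Then for the index $(n,m)$ one has, using $\eps_{(n,m)} \le \delta_n$ (so that $S(B_1, x_{(n,m)}^\ast, \eps_{(n,m)}) = S(B_1, u_n^\ast, \eps_{(n,m)}) \subset S(B_1, u_n^\ast, \delta_n)$) and symmetrically $\eps_{(n,m)} \le \eta_m$, exactly the two inclusions required in \eqref{equa:sum of slices}. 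By Lemma~\ref{Lemm:sumSCDcharact}, $B_1 + B_2$ is SCD.

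I do not expect a serious obstacle here; the only points needing a little care are the bookkeeping identity $S(B_i, x^\ast, \eps) = S(B_i, x_i^\ast, \eps)$ for a functional on a direct sum, the fact that shrinking $\eps$ only shrinks a slice (so the $\min$ is harmless), and the observation that a countable family indexed by $\N \times \N$ is still countable. One subtlety worth stating explicitly: the slices $S(B_1, u_n^\ast, \delta_n)$ and $S(B_2, v_m^\ast, \eta_m)$ should be taken non-empty (as slices are by definition), but if one prefers to allow degenerate slices this causes no trouble, as noted after Definition~\ref{defSCDsets}. The mild ``hard part,'' if any, is simply recognizing that in a direct sum the two coordinate functionals can be chosen entirely independently, which is what makes the product-indexed family work — in contrast to the Minkowski sum treated in Theorem~\ref{Prop:sumNotSCD}, where the single functional $x^\ast$ constrains slices of $A$ and of $D$ simultaneously and no such decoupling is available.
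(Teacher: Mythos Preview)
Your proposal is correct and follows essentially the same route as the paper: fix determining sequences of slices for $B_1$ and $B_2$ with functionals in $X_1^\ast$ and $X_2^\ast$ respectively, form the doubly indexed family $x_{(n,m)}^\ast = u_n^\ast \oplus v_m^\ast$ with $\eps_{(n,m)} = \min\{\delta_n,\eta_m\}$, and verify condition~(b) of Lemma~\ref{Lemm:sumSCDcharact}. Your write-up is in fact more detailed than the paper's (which states the construction and leaves the verification to the reader); the key identity $S(B_i, x^\ast, \eps) = S(B_i, x_i^\ast, \eps)$ that you spell out is exactly what justifies taking the determining functionals in $X_i^\ast$ from the start.
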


\begin{proof}
 Let $S(B_i,y_{n,i}^{\ast}, \delta_{n,i})$,  $y_{n,i}^{\ast} \in
 X_i^{\ast}$, $n \in \N$, form  determining sequences
 of slices for $B_i$,  $i = 1,2$. Then, the collection of functionals
 $x_{n,m}^{\ast} = y_{n,1}^{\ast} \oplus y_{m,2}^{\ast}$ and
 corresponding  $\eps_{n,m} = \min\{\delta_{n,1}, \delta_{m,2}\}$
 will be  a countable family that satisfies condition~(b) of
 Lemma~\ref{Lemm:sumSCDcharact}. 
\end{proof}

And now for the last of the promised main examples of the subsection.

\begin{Theo} \label{Prop-non-conv-union} 
In every Banach space $X$ with the Daugavet property there is an SCD
set $B$ such that $ B \cup (-B)$ is not SCD. 
\end{Theo}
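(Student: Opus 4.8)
\emph{The plan.} I would take $B$ to be a suitable translate of the set $A$ of Proposition~\ref{prop:counterexample}. Translates of SCD sets are clearly SCD (slices go to slices under translation), so $B$ is automatically SCD; the pair $B,-B$ is then literally ``a shift of $A$ and a shift of $-A$'', and by Lemma~\ref{Prop:SCDconvexhull} it is enough to show that the symmetric convex body $\mathcal{C}:=\overline{\conv}\bigl(B\cup(-B)\bigr)$ is not SCD. Since $B$ is convex, $\mathcal{C}=\overline{\bigcup_{\lambda\in[0,1]}\bigl(\lambda B+(1-\lambda)(-B)\bigr)}$, and for every bounded linear functional $f$ one has $\sup_{\mathcal{C}}f=\max\{\sup_{B}f,\ \sup_{-B}f\}$.

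\emph{Choice of the shift.} The natural candidate is $B:=A+w$ for a vector $w\in E$ of sufficiently large norm (a ``horizontal'' translate). A ``vertical'' shift by a multiple of $e_0$ must be avoided: there $\mathcal{C}$ becomes a capsule — the cylinder $B_E\oplus[-c,c]$ capped by a copy of $A$ and a copy of $-A$ — which is huskable (the cap boundaries supply enough points with small relatively weakly open neighbourhoods, and their closed convex hull is all of $\mathcal{C}$), hence SCD. For $B=A+w$, writing points of $X$ as $x\oplus t$ and letting $\pi_E$ be the projection of $X=E\oplus\spn e_0$ onto $E$, every generator of $\mathcal{C}$ lies in $B=A+w$ or in $-B=-A-w$, so its $E$-component lies in $(B_E+w)\cup(B_E-w)\subseteq B_E+[-w,w]$, the Minkowski sum of $B_E$ with the segment $\{\mu w:|\mu|\le1\}$; since this set is convex, $\pi_E(p)\in B_E+[-w,w]$ for \emph{every} $p\in\mathcal{C}$, and in fact $\mathcal{C}\cap\{t=0\}=(B_E+[-w,w])\oplus\{0\}$. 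Consequently, for $y^{*}\in E^{*}$ the $E$-projection of any slice $S(\mathcal{C},\,y^{*}\oplus 0,\,\eps)$ is contained in $S(B_E+[-w,w],\,y^{*},\,\eps)$, and so, by Lemma~\ref{lem-sum-slice}(ii), the $B_E$-summand of each of its points lies in $S(B_E,\,y^{*},\,\eps)$.

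\emph{The argument.} Assume $\mathcal{C}$ is SCD, with determining slices $S(\mathcal{C},\,y^{*}_{n}\oplus\lambda_{n},\,\eps_{n})$. As $B_E$ is not SCD (it enjoys the property of Lemma~\ref{daug-not-scd}), these slices, restricted to $B_E$, are not determining for $B_E$, so by Lemma~\ref{len-H-B-scd} there exist $y^{*}\in S_{E^{*}}$ — which one checks can be chosen with $y^{*}(w)>0$ — and $\delta\in(0,1)$ with $S(B_E,\,y^{*}_{n},\,\eps_{n})\not\subseteq S(B_E,\,y^{*},\,\delta)$ for all $n$. Apply the determining property to $S(\mathcal{C},\,y^{*}\oplus 0,\,\delta')$ for a small $\delta'\le\delta$: there is $k$ with $S(\mathcal{C},\,y^{*}_{k}\oplus\lambda_{k},\,\eps_{k})\subseteq S(\mathcal{C},\,y^{*}\oplus 0,\,\delta')$. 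Now, exactly as in the proof of Theorem~\ref{Prop:sumNotSCD}, one distinguishes cases by the sign of $\lambda_{k}$ and by which of $B,-B$ realizes $\sup_{\mathcal{C}}(y^{*}_{k}\oplus\lambda_{k})$. If $\lambda_{k}\le0$ and the supremum is attained on $B=A+w$ (and, symmetrically, if $\lambda_{k}\ge0$ and it is attained on $-B$), then it is attained on the translated flat face $(B_E+w)\oplus\{0\}$ of $A+w$; hence for every $\zeta\in S(B_E,\,y^{*}_{k},\,\eps_{k})$ the point $(\zeta+w)\oplus0$ lies in $S(\mathcal{C},\,y^{*}_{k}\oplus\lambda_{k},\,\eps_{k})\subseteq S(\mathcal{C},\,y^{*}\oplus 0,\,\delta')$, which — using $y^{*}(w)>0$ and the previous paragraph — forces $\zeta\in S(B_E,\,y^{*},\,\delta')\subseteq S(B_E,\,y^{*},\,\delta)$, contradicting the choice of $(y^{*},\delta)$. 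In the remaining cases the slice of $\mathcal{C}$ concentrates near one of the two one-point faces (``tips'') of $\mathcal{C}$ at $t=\pm\frac{1}{\sqrt3}$; one argues that, if $\|w\|$ is large enough, such a slice is too far out in the $\pm w$ direction to be contained in $S(\mathcal{C},\,y^{*}\oplus 0,\,\delta')$, so these cases do not occur — except possibly the borderline sub-case in which the supremum over $A+w$ is realized at a level $t^{*}$ so close to $0$ that the slice still lies near the equator, which one would handle by replacing $(\zeta+w)\oplus0$ by $\bigl((1-\rho)\zeta+w\bigr)\oplus t^{*}$ with $\rho\asymp t^{*2}$ (so that the perturbed point genuinely lies in $A+w$) and using the quadratic flatness of the boundary of $A$ at $t=0$ to absorb the error.

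\emph{The obstacle.} The crux is this final case analysis. In Theorem~\ref{Prop:sumNotSCD} the summand $A$ lies in $\{t\ge0\}$, so it automatically exposes its $B_E$-face against any functional with $\lambda\le0$; in passing to the symmetric hull $\mathcal{C}$ this one-sidedness is destroyed, and it must be restored from the geometry of the shift — $w$ (and then $\delta'$) have to be chosen so that \emph{every} slice of $\mathcal{C}$ small enough to sit inside $S(\mathcal{C},\,y^{*}\oplus 0,\,\delta')$ is forced to meet the region where $\mathcal{C}$ coincides with the $B_E$-face of $A+w$ and therefore to contain a full slice $S(B_E,\,y^{*}_{k},\,\eps_{k})$. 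Making all the estimates close simultaneously — keeping the interplay of $\eps_{k}$, $\lambda_{k}$, $\delta'$ and $\|w\|$ under control, and in particular avoiding a circular dependence of $\delta'$ on the index $k$ produced by the determining sequence — is the hard part, and a clean way to do this (perhaps via a more careful choice of the shift than a pure translate, or a preliminary normalization of $A$) is what the proof ultimately needs.
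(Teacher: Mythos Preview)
Your proposal has a genuine gap, and it stems from the paragraph where you dismiss the vertical shift. You write that for $B=A-ce_0$ the hull $\overline{\conv}(B\cup(-B))$ is a ``capsule'' --- the cylinder $B_E\oplus[-c,c]$ capped by copies of $A$ and $-A$ --- and hence huskable. That description is correct for small $c$, but you overlooked the critical value $c=\alpha:=1/(2\sqrt{3})$, i.e.\ exactly half the height of~$A$. With this shift the caps disappear completely: $B=A-\alpha e_0$ has its $e_0$-coordinate ranging over $[-\alpha,\alpha]$, with the full section $B_E$ at level $-\alpha$ and a single point at level $+\alpha$; symmetrically, $-B=-A+\alpha e_0$ has the full section $B_E$ at level $+\alpha$. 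Thus the tip of $B$ is absorbed into the $B_E$-face of $-B$ (and vice versa), and one gets
\[
\conv\bigl(B\cup(-B)\bigr)=B_E\oplus[-\alpha,\alpha]
\]
on the nose --- a pure cylinder, no caps. This set is \emph{not} SCD: if it were, Theorem~\ref{theo:SumSCDimplies-summandsSCD} would force $B_E$ to be SCD, contradicting Lemma~\ref{daug-not-scd}. Together with Lemma~\ref{Prop:SCDconvexhull} this finishes the proof in three lines.

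So the paper's argument is precisely the vertical shift you rejected, and it avoids all of the delicate case analysis you sketch for the horizontal shift. Your horizontal-shift route may well be salvageable, but by your own admission the interplay between $\eps_k$, $\lambda_k$, $\delta'$ and $\|w\|$ is not closed, and there is no need to close it: revisit the vertical shift with $c=\tfrac{1}{2\sqrt3}$ and the theorem falls out immediately.
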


\begin{proof}
We follow the notation of Proposition~\ref{prop:counterexample}. Let
$\alpha := 1/(2 \sqrt{3})$ and $B := A -  \alpha e_0 =\{x \oplus (t -
\alpha): x \oplus t \in A\}$, where $A$ is the set in
\eqref{equa:mainSet}. We claim that  
$$
\conv( B \cup (-B)) =  B_{E} \oplus [-\alpha, \alpha].
$$
 Indeed, it is clear that $B$ is contained in $B_{E} \oplus [-\alpha,
 \alpha]$, and so is $-B$. For the converse, use simply that $B_{E}  -
 \alpha e_0 \subset B$,  
 $B_{E}  + \alpha e_0 \subset -B$, and consequently
$$
B_{E} \oplus [-\alpha, \alpha] = \conv {(B_{E}  - \alpha e_0) \cup
  (B_{E}  + \alpha e_0)} \subset \conv( B \cup (-B)). 
$$
 Finally, if $ B \cup (-B)$ were SCD, then $B_{E} \oplus [-\alpha,
 \alpha]$ would be SCD by Lemma~\ref{Prop:SCDconvexhull}. But it was
 already remarked above that this is never the case by
 Theorem~\ref{theo:SumSCDimplies-summandsSCD}, as $B_{E}$ is not SCD
 because of the Daugavet property of $E$  \cite[Example~2.13]{SCDsets}. 
\end{proof}

%%%%%%%%%%%%%%%%%%%%%%%%%%%%%%%
%%%%%%%%%%%%%%%%%%%%%%%%%%%%%%%
%%%%%%%%%%%%%%%%%%%%%%%%%%%%%%%
%%%%%%%%%%%%%%%%%%%%%%%%%%%%%%

\section{Symmetrization of the examples}\label{sec3}

 In the most important applications of SCD sets, the sets which appear
 are  balls and images of balls under the action of linear
 operators. So, it would be 
natural to ask whether examples demonstrating  non-stability of the
 property SCD can be constructed to be balls of some equivalent norms,
 that is, to be convex closed bounded symmetric bodies. The keyword
 here is ``symmetric'' because the examples that we have constructed above
 possess all the remaining properties of being  convex closed bounded,
 and to have non-empty interior. In this section we apply a natural
 symmetrization procedure which helps to obtain symmetric examples
 from non-symmetric ones. 
 
Let $U$ be a bounded   non-empty  subset of a Banach space $X$. By
symmetrization of $U$ we will mean the following subset of $\Sim(U)
\subset X \oplus_\infty \R$:  
$$
\Sim(U) = \aconv(U \oplus 1) .
$$

\begin{Lemm}  \label{lem-slice-hyperplane} 
 Let $U, V \neq \emptyset$ be bounded subsets such that $U$ is
 contained in a closed hyperplane $H_0$, and $V$ lies on one side of
 $H_0$  at a positive distance from $H_0$. Then, every slice of $U$ is
 at the same time a slice of $U \cup V$. 
\end{Lemm}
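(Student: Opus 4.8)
The plan is to exhibit explicitly a functional that realizes any given slice of $U$ as a slice of $U\cup V$. Since $U$ is contained in a closed hyperplane $H_0$, write $H_0=\{x\in X\colon f(x)=c\}$ for some $f\in X^\ast\setminus\{0\}$ and $c\in\R$; after replacing $f$ by $-f$ if necessary we may assume $V$ lies in the half-space $\{f<c\}$, and the hypothesis ``positive distance'' gives $\eta:=c-\sup_{v\in V}f(v)>0$.

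Now fix a slice $S(U,x^\ast,\varepsilon)=\{x\in U\colon x^\ast(x)>\sup_{u\in U}x^\ast(u)-\varepsilon\}$. The idea is to perturb $x^\ast$ by a large multiple of $f$: consider $z^\ast:=x^\ast+\lambda f$ for $\lambda>0$ to be chosen. On $U$ the functional $f$ is constant equal to $c$, so $z^\ast(x)=x^\ast(x)+\lambda c$ for all $x\in U$; hence $\sup_{u\in U}z^\ast(u)=\sup_{u\in U}x^\ast(u)+\lambda c$ and, for the same $\varepsilon$,
\[
S(U\cup V,z^\ast,\varepsilon)\cap U=S(U,x^\ast,\varepsilon).
\]
It remains to choose $\lambda$ so large that no point of $V$ can enter the slice, i.e. so that $\sup_{v\in V}z^\ast(v)\le \sup_{u\in U}z^\ast(u)-\varepsilon$. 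For $v\in V$ we have $z^\ast(v)=x^\ast(v)+\lambda f(v)\le x^\ast(v)+\lambda(c-\eta)$, and $x^\ast(v)$ is bounded above on the bounded set $V$ by some constant $M$. Comparing with $\sup_{u\in U}z^\ast(u)\ge m+\lambda c$ where $m:=\sup_{u\in U}x^\ast(u)$ (here one should make sure $U\ne\emptyset$, which holds by hypothesis, and pick any $u_0\in U$ so $\sup_{u\in U}z^\ast(u)\ge x^\ast(u_0)+\lambda c$), we see that it suffices to take $\lambda$ with $\lambda\eta> M-x^\ast(u_0)+\varepsilon$. For such $\lambda$ the set $S(U\cup V,z^\ast,\varepsilon)$ contains no element of $V$, so it equals $S(U,x^\ast,\varepsilon)$, which is nonempty; thus it is a genuine slice of $U\cup V$ and coincides with the prescribed slice of $U$.

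The computation is entirely routine; the only point requiring a little care is bookkeeping the constants so that the large-$\lambda$ choice simultaneously (a) leaves the supremum over $U$ essentially unchanged up to the additive constant $\lambda c$ and (b) pushes every point of $V$ strictly below the threshold, using the positive gap $\eta$. There is no real obstacle here — the hypothesis that $V$ is at positive distance from $H_0$ is exactly what makes the argument work, and boundedness of $U$ and $V$ guarantees the relevant suprema are finite. One should also note in passing the degenerate case $x^\ast=0$ (the degenerate slice $S(U,0,\varepsilon)=U$): then $z^\ast=\lambda f$ and for large $\lambda$ the slice $S(U\cup V,z^\ast,\varepsilon)$ is again all of $U$, so the statement holds in that case too.
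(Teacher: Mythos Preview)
Your proof is correct and follows essentially the same approach as the paper's: perturb the slicing functional by a large multiple of the functional defining $H_0$, so that on $U$ the slice is unchanged while every point of $V$ is pushed strictly below the threshold. The only difference is that the paper phrases the existence of a suitable parameter via contradiction whereas you compute an explicit cutoff for $\lambda$; note, as a cosmetic point, that your displayed equality $S(U\cup V,z^\ast,\varepsilon)\cap U=S(U,x^\ast,\varepsilon)$ tacitly uses $\sup_{U\cup V}z^\ast=\sup_U z^\ast$, which only holds \emph{after} your choice of $\lambda$, so that line should really come after the choice rather than before it.
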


\begin{proof}
 Without loss of generality we can assume that $0 \in U$, and that $U,
 V \subset B_X$ (this can be done by shifting and scaling). Then there
 are an $x_0^* \in S_{X^*}$ and $\eps_0 > 0$ such that $H_0=\ker x^*_0
 \supset  U$, and $V \subset \{x \in X\colon x_0^*(x) <  - \eps_0\}$.
 Let $x^* \in S_{X^*}$ and let $S = S(U, x^*,\eps)$ be a slice of $U$.
 Denote $r = \sup_{x\in U} x^*(x) \in  [-1, 1]$ and consider for every
 $t > 0$ the functional $ x_t^* = x^* + t  x_0^*$. Since on $U$ the
 values of $ x_t^*$  and of $x^*$ are the same,  $S = S(U,
 x_t^*,\eps)$ for all $t  > 0$. We are going to demonstrate that for
 some values of $t  > 0$ the slice  $S_t = S(U \cup V, x_t^*,\eps)$ of
 $U \cup V$ also equals $S$, which will complete our proof.  

So our goal is to show that there is a $t > 0$ such that $S_t \cap V =
\emptyset$. Assume to the  contrary that for every $t > 0$ there is an
element  $v_t \in V \cap S_t$. Then 
\[
\begin{split}
1  -  t  \eps_0 &\ge  x^*(v_t) + t  x_0^*(v_t) =  x_t^*(v_t) \\
& > \sup_{x \in U \cup V} x_t^*(x) - \eps \\ 
&=  \max\Bigl\{\sup_{x \in U} x^*(x) , \sup_{x \in  V} x^*(x) + t
x_0^*(x) \Bigr\} - \eps  \\  
&\ge  \max\{r , r -  t  \eps_0 \} - \eps = r - \eps,
\end{split}
\]
which means that $t < \frac{1+\eps - r}{\eps_0}$. This is a contradiction.
\end{proof}

\begin{Lemm}  \label{lem-slice-hyperplane++} 
If under the conditions of Lemma~\ref{lem-slice-hyperplane} $U \cup V$
is SCD, then $U$ is  also an SCD set. 
\end{Lemm}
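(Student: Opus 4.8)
The plan is to derive the statement directly from Lemma~\ref{lem-slice-hyperplane} together with Lemma~\ref{len-H-B-scd}. Assume $U\cup V$ is SCD and fix a determining sequence of slices $\{T_n\colon n\in\N\}$ of $U\cup V$. The natural candidate for a determining sequence of slices of $U$ is obtained by intersecting each $T_n$ with $U$ and discarding those with empty intersection: set $V_n := T_n\cap U$ whenever this is non-empty. The key point is that each such $V_n$ is \emph{itself a slice of $U$} (it is the intersection of $U$ with an open half-space, and it is non-empty by construction), so $\{V_n\}$ is a legitimate candidate sequence of slices of $U$.

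The main step is to verify property~$(*)$ of Lemma~\ref{len-H-B-scd} for $\{V_n\}$ in $U$: every slice $S$ of $U$ must contain some $V_n$. Given a slice $S = S(U,x^*,\eps)$ of $U$, apply Lemma~\ref{lem-slice-hyperplane}: under the stated geometric configuration, $S$ is simultaneously a slice of $U\cup V$, say $S = S(U\cup V, x_t^*,\eps)$ for a suitable $t>0$ with $S_t\cap V=\emptyset$, i.e.\ $S_t\subset U$, so in fact $S = S(U\cup V,x_t^*,\eps)\cap U = S(U\cup V,x_t^*,\eps)$. Because $\{T_n\}$ is determining for $U\cup V$, Lemma~\ref{len-H-B-scd} yields an $n$ with $T_n\subset S$. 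Since $S\subset U$, we get $T_n\subset U$, hence $V_n = T_n\cap U = T_n\subset S$. Thus every slice of $U$ contains one of the $V_n$, and Lemma~\ref{len-H-B-scd} tells us $\{V_n\}$ is determining for $U$, so $U$ is SCD.

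I expect the only real subtlety to be the bookkeeping in identifying a slice $S$ of $U$ with a genuine slice of $U\cup V$ (not merely a relatively open subset): this is exactly what Lemma~\ref{lem-slice-hyperplane} provides via the perturbed functional $x_t^*$, so there is no obstacle beyond invoking it correctly and noting that once $S_t\cap V=\emptyset$ the slice $S_t$ of $U\cup V$ coincides with $S$ and lies entirely in $U$. One should also record that the sub-collection $\{n : T_n\cap U\neq\emptyset\}$ is non-empty — indeed infinitely many $T_n$ meet $U$, since $U$ itself is a slice (degenerate, via $x^*=0$, which is permitted) of $U$ and hence, by the argument just given, contains some $T_n$. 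With this, the proof is complete.
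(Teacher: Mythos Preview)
Your proof is correct and follows essentially the same approach as the paper's: take a determining sequence of slices $\{T_n\}$ of $U\cup V$, restrict to those meeting $U$ (giving slices of $U$), and verify property~$(*)$ of Lemma~\ref{len-H-B-scd} by using Lemma~\ref{lem-slice-hyperplane} to view any slice $S$ of $U$ as a slice of $U\cup V$ lying entirely in $U$, so that some $T_n\subset S$. The only difference is that you add an explicit remark about the sub-collection being non-empty, which the paper leaves implicit since it follows from the main argument.
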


\begin{proof}
 Let $\{V_n\colon n \in \N\}$ be a determining sequence of slices of
 $U \cup V$.  Denote $N_1 = \{n \in \N: V_n \cap U  \neq
 \emptyset\}$. Then $S_n:= V_n \cap  U $, $n \in N_1$, are slices of
 $U$. We are going to demonstrate that the collection $\{S_n : n \in
 N_1\}$ is determining for  $U$, which will do the job.  Let us use
 Lemma~\ref{len-H-B-scd}.  Consider a  slice $S$ of  $U$. Then, by
 Lemma~\ref{lem-slice-hyperplane}, $S$ is at the same time a slice of
 $U \cup V$. So, there is an $n \in \N$ such that $V_n \subset S$, but
 this $n$ automatically belongs to $N_1$.  
\end{proof}

\begin{Lemm}  \label{lem-symm-gener} 
 The following conditions for a bounded  non-empty subset $U \subset
 X$  are equivalent: 
 \begin{enumerate}
 \item [(i)]
 $U$ is SCD, 
 \item[(ii)]
 $(U \oplus 1) \cup - (U \oplus 1)$   is SCD,  
 \item[(iii)]
 $\Sim(U)$ is SCD.
 \end{enumerate}
\end{Lemm}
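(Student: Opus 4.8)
The plan is to prove the equivalence of (i), (ii), and (iii) by a cyclic argument, using the symmetrization lemmas already established. The key observation is that $\Sim(U) = \aconv(U \oplus 1)$ is, up to closure, the convex hull of $(U\oplus 1) \cup -(U\oplus 1)$, so by Lemma~\ref{Prop:SCDconvexhull} the sets in (ii) and (iii) are SCD simultaneously; this disposes of (ii) $\Leftrightarrow$ (iii) immediately. It then remains to link (i) with the symmetric versions.

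For (ii) $\Rightarrow$ (i): I would set $U' = U \oplus 1 \subset X \oplus_\infty \R$ and note that $U'$ lies in the closed hyperplane $H_0 = \{x \oplus s : s = 1\}$ (more precisely, in the affine hyperplane $\{ x\oplus s : s=1\}$; after a harmless translation it becomes a genuine linear hyperplane), while $V := -(U\oplus 1)$ lies in the region $\{x\oplus s : s \le -c\}$ for some $c>0$ (indeed $s = -1$ there), hence $V$ is on one side of $H_0$ at positive distance. Thus Lemma~\ref{lem-slice-hyperplane++} applies with this $U'$ and $V$, giving that $U' = U \oplus 1$ is SCD. Finally $U\oplus 1$ being SCD is clearly equivalent to $U$ being SCD, since the map $x \mapsto x\oplus 1$ is an affine isometry of $X$ onto $U\oplus 1$'s affine span and slices correspond to slices.

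For (i) $\Rightarrow$ (ii): assuming $U$ is SCD, so is $U \oplus 1$ (same affine-isometry remark), and hence so is $-(U\oplus 1)$. The set $(U\oplus 1) \cup -(U\oplus 1)$ is a union of two SCD sets lying in disjoint parallel hyperplanes at positive distance; to show the union is SCD, I would build a determining sequence of slices by combining a determining sequence $\{S_n\}$ of slices of $U\oplus 1$ with a determining sequence $\{S_n'\}$ of slices of $-(U\oplus 1)$, using the direction $\pm x_0^*$ (normal to the hyperplanes) to ``activate'' one component while killing the other, exactly as in the proof of Lemma~\ref{lem-slice-hyperplane}: given a functional $x^* + t x_0^*$, for large $t$ its slice on the union sits inside $U\oplus 1$ and restricts to the prescribed slice there, and symmetrically for $-t$. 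Checking that every slice of the union contains one of these slices (Lemma~\ref{len-H-B-scd}) is then a matter of case analysis on the sign of the $\R$-coordinate of the defining functional.

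The main obstacle will be the bookkeeping in (i) $\Rightarrow$ (ii): one must verify that \emph{every} slice of $(U\oplus 1)\cup -(U\oplus 1)$ — including ``mixed'' slices whose supremum is attained (or nearly attained) on both halves — contains one of the constructed slices. This requires noting that if the defining functional $y^*\oplus\mu$ has, say, $\mu$ making the supremum over $U\oplus 1$ strictly larger, then for small enough $\eps$ the slice is entirely contained in $U\oplus 1$ and is an honest slice there, so determinacy of $\{S_n\}$ for $U\oplus 1$ finishes the job; the genuinely mixed case (suprema equal on both halves) forces $\mu = 0$ and then the slice meets both halves in slices of $U$ and of $-U$ with the same functional $y^*$, which is handled by interleaving $\{S_n\}$ and $\{S_n'\}$ appropriately and passing to a common refinement. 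Everything else is routine.
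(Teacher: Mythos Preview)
Your argument is correct and for (ii)~$\Leftrightarrow$~(iii) and (ii)~$\Rightarrow$~(i) it is exactly the paper's proof. The only difference is in (i)~$\Rightarrow$~(ii): the paper avoids your case analysis entirely by working directly from the \emph{definition} of a determining sequence rather than from the characterization in Lemma~\ref{len-H-B-scd}. Concretely, the paper takes $V_n := S_n \oplus 1$ and notes (via Lemma~\ref{lem-slice-hyperplane}) that each $\pm V_n$ is a slice of the whole union; then if a set $B$ meets every $\pm V_n$, it meets every $V_n$ and hence $\overline{\conv}(B)\supset U\oplus 1$, and symmetrically $\overline{\conv}(B)\supset -(U\oplus 1)$, so $\overline{\conv}(B)$ contains the union. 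No splitting on the sign of $\mu$, no ``mixed'' slices, no refinement is needed.

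Your route via Lemma~\ref{len-H-B-scd} does go through, but the bookkeeping you flag as the ``main obstacle'' is self-inflicted: given any slice $S$ of the union, its intersection with whichever half realizes the larger supremum is automatically a nonempty slice of that half, and determinacy there produces an $S_n$ (or $S_n'$) inside $S$; no interleaving or common refinement is required, just the union of the two families. One small slip: the ``genuinely mixed'' case (equal suprema on the two halves) does not force $\mu=0$ --- that would only follow if $U$ were symmetric, which is not assumed; it forces $\mu = \tfrac12\bigl(\sup_{-U} y^* - \sup_{U} y^*\bigr)$. This does not affect the validity of your argument, since in that case either half works.
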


\begin{proof}
  Taking into account that $\Sim(U) = \conv\left((U \oplus 1) \cup - (U
  \oplus 1)\right)$ the equivalence (ii) $\Leftrightarrow$ (iii)
  follows from Lemma~\ref{Prop:SCDconvexhull}. 

(i) $\Rightarrow $ (ii): Let $\{S_n\colon n \in \N\}$ be a determining
sequence of slices of $U$. Then, the $V_n := S_n \oplus 1$  form a
determining  sequence of slices of $U \oplus 1$ and the  $- V_n$  form
a determining  sequence of slices of $ - (U \oplus 1)$. By
Lemma~\ref{lem-slice-hyperplane}, $\pm V_{n}$ are also slices of  $(U
\oplus 1) \cup - (U \oplus 1)$. But then the countable collection
$\{\pm V_{n}\colon n \in \N\}$ forms a determining sequence of slices
of $(U \oplus 1) \cup - (U \oplus 1)$. Indeed,  let $V \subset  X
\oplus_\infty \R$  intersect all  $\pm V_{n}$, $n \in \N$. Then, since
$\{V_n\colon n\in\N\}$ is determining for $U \oplus 1$, we have
$\overline{\conv} (V) \supset U\oplus 1$  and since the $- V_n$  form a
determining  sequence of slices of $ - (U \oplus 1)$ we also have
$\overline{\conv} (V) \supset -( U\oplus 1)$, which completes the proof
of the implication (i) $\Rightarrow$~(ii). 

(ii) $\Rightarrow$ (i):  Applying Lemma~\ref{lem-slice-hyperplane++} we
obtain that  $U \oplus 1$ is SCD,  but $U \oplus 1$ is a shift of $U$,
so $U$ is also SCD. 
\end{proof}

The next example is based on the elementary fact that  the convex hull
of the union of symmetrized   sets $\Sim(U_1) \cup \Sim(U_2)$ is equal
to  the symmetrized union $\Sim(U_1 \cup U_2)$: 
$$
\conv\left(\Sim(U_1) \cup \Sim(U_2)\right) = \Sim(U_1 \cup U_2).
$$   
Indeed, the left hand side is convex and symmetric, contains $(U_1
\cup U_2)\oplus 1$, so contains $\Sim(U_1 \cup U_2)$. Conversely,  the
right hand side is convex, contains $\Sim(U_1)$  and $\Sim(U_2)$, so
contains the convex hull $\conv\left(\Sim(U_1) \cup \Sim(U_2)\right)$.

\begin{Theo} \label{prop-scd-intersect-symm}
In every Banach space $Y$ with the Daugavet property there are convex
closed bounded symmetric sets $\tilde B_1, \tilde B_2 \subset Y$
which are  SCD sets, but  whose union $\tilde B_1 \cup \tilde B_2$ is
not SCD. If, additionally, $Y$ is separable, then these 
$\tilde B_1, \tilde B_2 \subset Y$ can be chosen to have non-empty interior.
\end{Theo}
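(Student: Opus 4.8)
The plan is to reduce the symmetric statement to the non-symmetric Theorem~\ref{Prop-non-conv-union} by means of the symmetrization machinery already developed in Lemmas~\ref{lem-symm-gener} and the elementary identity $\conv(\Sim(U_1) \cup \Sim(U_2)) = \Sim(U_1 \cup U_2)$ proved just above. Concretely, take $X$ to be a separable subspace of $Y$ with the Daugavet property (using \cite[Theorem~4.5]{KadSW2}), so that $Y = X \oplus_\infty \R$ has the Daugavet property as well, and one can work inside $X$ first and then inside $X \oplus_\infty \R$. Let $B = A - \alpha e_0 \subset X$ be the shifted set from the proof of Theorem~\ref{Prop-non-conv-union}, for which $B$ is SCD but $B \cup (-B)$ is not SCD. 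Put $\tilde B_1 := \Sim(B)$ and $\tilde B_2 := \Sim(-B)$, two convex closed bounded symmetric subsets of $X \oplus_\infty \R$.

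For the positive half, I would invoke Lemma~\ref{lem-symm-gener}: since $B$ is SCD, so is $\Sim(B) = \tilde B_1$, and since $-B$ is SCD (it is a reflection of a shift of $A$, which is SCD by Proposition~\ref{prop:counterexample}), so is $\tilde B_2$. For the negative half, the identity above gives
\[
\conv(\tilde B_1 \cup \tilde B_2) = \conv(\Sim(B) \cup \Sim(-B)) = \Sim(B \cup (-B)).
\]
If $\tilde B_1 \cup \tilde B_2$ were SCD, then by Lemma~\ref{Prop:SCDconvexhull} its convex hull $\Sim(B \cup (-B))$ would be SCD, and then by the equivalence (iii)~$\Leftrightarrow$~(i) of Lemma~\ref{lem-symm-gener}, $B \cup (-B)$ would be SCD, contradicting Theorem~\ref{Prop-non-conv-union}. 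Hence $\tilde B_1 \cup \tilde B_2$ is not SCD.

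For the final assertion about non-empty interior (under the separability hypothesis on $Y$, which lets us take $X = Y$), I would use the remark after Proposition~\ref{prop:counterexample} that $A$ — and hence its shift $B$ and its reflection $-B$ — has non-empty interior in $X$. The symmetrization $\Sim(U) = \aconv(U \oplus 1)$ of a set $U$ with non-empty interior in $X$ has non-empty interior in $X \oplus_\infty \R$: if $u_0$ is an interior point of $B$, then a small box around $u_0 \oplus 1$ lies in $B \oplus \{1\}$ up to the last coordinate, and taking absolutely convex combinations with a point $-(u_1 \oplus 1)$ opens up the last coordinate direction as well, so a genuine open set sits inside $\Sim(B)$; the same works for $\Sim(-B)$. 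I expect the only mildly delicate point to be this last interior computation, but it is a routine finite-dimensional convexity argument and presents no real obstacle; everything else is a direct concatenation of the already-established lemmas.

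\begin{proof}
By \cite[Theorem~4.5]{KadSW2} we may assume $Y$ separable (replacing it by a separable subspace with the Daugavet property does not affect the statement, except for the optional last assertion, which is only claimed when $Y$ is separable). Write $Y = X \oplus_\infty \R$ for a $1$-codimensional closed subspace $X$; by \cite[Theorem~2.14]{KadSSW} $X$ has the Daugavet property. Let $B = A - \alpha e_0 \subset X$ be the set from the proof of Theorem~\ref{Prop-non-conv-union}, so $B$ is SCD (it is a shift of the SCD set $A$) and $-B$ is SCD (a reflection of the shift $A - \alpha e_0$, or equivalently a shift of $-A = D$, which is SCD by Proposition~\ref{prop:counterexample}), while $B \cup (-B)$ is \emph{not} SCD.

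Define $\tilde B_1 = \Sim(B)$ and $\tilde B_2 = \Sim(-B)$, which by construction are convex, closed, bounded, and symmetric subsets of $Y = X \oplus_\infty \R$. By Lemma~\ref{lem-symm-gener}, since $B$ and $-B$ are SCD, so are $\tilde B_1 = \Sim(B)$ and $\tilde B_2 = \Sim(-B)$.

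Suppose, for contradiction, that $\tilde B_1 \cup \tilde B_2$ is SCD. Using the identity $\conv(\Sim(U_1) \cup \Sim(U_2)) = \Sim(U_1 \cup U_2)$ established above with $U_1 = B$, $U_2 = -B$, we get
\[
\conv(\tilde B_1 \cup \tilde B_2) = \conv\bigl(\Sim(B) \cup \Sim(-B)\bigr) = \Sim\bigl(B \cup (-B)\bigr).
\]
By Lemma~\ref{Prop:SCDconvexhull}, $\conv(\tilde B_1 \cup \tilde B_2)$ is SCD, hence $\Sim(B \cup (-B))$ is SCD, and then by Lemma~\ref{lem-symm-gener} ((iii) $\Rightarrow$ (i)) the set $B \cup (-B)$ is SCD, contradicting Theorem~\ref{Prop-non-conv-union}. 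Therefore $\tilde B_1 \cup \tilde B_2$ is not SCD.

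Finally, assume $Y$ is separable, so we may take $X = Y$ in the above. By the remark following Proposition~\ref{prop:counterexample}, the set $A$ has non-empty interior in $X$, and hence so does its shift $B$ and its reflection $-B$. It remains to check that $\Sim(U)$ has non-empty interior in $X \oplus_\infty \R$ whenever $U \subset X$ has non-empty interior. Let $u_0$ be an interior point of $U$, say $u_0 + \rho B_X \subset U$ for some $\rho > 0$, and fix any $u_1 \in U$. For $x \oplus s \in X \oplus_\infty \R$ write it as
\[
x \oplus s = \frac{1+s}{2}\,\bigl(y \oplus 1\bigr) + \frac{1-s}{2}\,\bigl(-(u_1 \oplus 1)\bigr),
\]
where $y = \frac{2x + (1-s)u_1}{1+s}$; when $|s|$ is small and $\|x - u_0\|$ is small, $y$ is close to $u_0$, hence $y \in U$, so $y \oplus 1 \in U \oplus 1 \subset \Sim(U)$ and $-(u_1 \oplus 1) \in \Sim(U)$, whence $x \oplus s \in \Sim(U)$ by convexity. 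This shows a neighbourhood of $u_0 \oplus 0$ lies in $\Sim(U)$. Applying this to $U = B$ and $U = -B$ shows $\tilde B_1$ and $\tilde B_2$ have non-empty interior.
\end{proof}
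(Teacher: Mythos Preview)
Your approach is exactly the paper's: symmetrize the sets $B$ and $-B$ from Theorem~\ref{Prop-non-conv-union} via $\Sim(\cdot)$, use Lemma~\ref{lem-symm-gener} for the positive direction, and use the identity $\conv(\Sim(U_1)\cup\Sim(U_2))=\Sim(U_1\cup U_2)$ together with Lemma~\ref{Prop:SCDconvexhull} and Lemma~\ref{lem-symm-gener} for the negative direction. Two small slips remain, though.

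First, you assert that $\tilde B_1=\Sim(B)$ and $\tilde B_2=\Sim(-B)$ are \emph{closed} ``by construction''. But $\Sim(U)=\aconv(U\oplus 1)$ is an ordinary absolutely convex hull, and in infinite dimensions the convex hull of the union of two closed bounded convex sets (here $B\oplus 1$ and $-(B\oplus 1)$) need not be closed; at any rate you give no argument. The paper fixes this by setting $\tilde B_i:=\overline{\Sim(\pm B)}$ at the end and invoking Lemma~\ref{Prop:SCDconvexhull} to see that passing to closures does not affect the SCD status. You should do the same.

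Second, your non-empty-interior computation is off. With $u_1\in U$ arbitrary and $y=\dfrac{2x+(1-s)u_1}{1+s}$, plugging in $s=0$, $x=u_0$ gives $y=2u_0+u_1$, which is \emph{not} close to $u_0$ in general, so you have not exhibited a neighbourhood of $u_0\oplus 0$ inside $\Sim(U)$. The convex combination you wrote down actually shows that a neighbourhood of $\tfrac{1}{2}(u_0-u_1)\oplus 0$ lies in $\Sim(U)$ (since at that centre one gets $y=u_0$, an interior point of $U$). Adjust the centre accordingly, or simply note that for the concrete $B=A-\alpha e_0$ the origin is interior to $B$, so one may take $u_0=u_1=0$ and the computation goes through verbatim.
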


\begin{proof}
It is sufficient to consider the case of separable $Y$ (otherwise,
substitute it by a separable subspace  with the Daugavet
property). Let $X$ be a one-codimensional closed subspace of $Y$. Then
$X$ also has  the Daugavet property. Our $Y$  is isomorphic to  $X
\oplus_\infty \R$, so it is sufficient to construct the requested
example in $X \oplus_\infty \R$. 
Let $B$ and $-B$ be  SCD sets from Theorem~\ref{Prop-non-conv-union},
and take   $B_1 ={\Sim(B)}$, $ B_2 ={\Sim (-B)}$. Since $B$, 
$-B$ are  convex, bounded and have non-empty interior,   $B_1$
and  $ B_2$ are  convex bounded symmetric bodies which are SCD
by the previous Lemma~\ref{lem-symm-gener}. Also,
$\conv( B_1 \cup B_2)  = \Sim(B \cup (-B))$, so by the same
Lemma~\ref{lem-symm-gener}  $\conv( B_1 \cup   B_2)$   is not SCD, and
consequently $B_1 \cup 
B_2$ is not SCD.   
To finish the proof define $\tilde B_1 $ and $\tilde B_2$ to be the
closures of $B_1$ and $B_2$ and apply Lemma~\ref{Prop:SCDconvexhull}.  
\end{proof}

In order to proceed with the symmetrization of the example about the
sum of SCD sets, we first need a natural lemma.

\begin{Lemm}  \label{lem-convhull-sum} 
Let $U_1, U_2 \subset X$  be  bounded  not empty subsets. Then,
 $$  \cconv(U_1 + U_2) = \overline{\conv (U_1) +  \conv (U_2)}.  $$
\end{Lemm}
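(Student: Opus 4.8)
The plan is to prove the two inclusions separately, establishing first the easier containment $\cconv(U_1+U_2) \subset \overline{\conv(U_1)+\conv(U_2)}$ and then the reverse. For the first inclusion, observe that $U_1+U_2 \subset \conv(U_1)+\conv(U_2)$ trivially, and the right-hand side of the desired equality is a closed convex set (the closure of a sum of two convex sets is convex, since the sum of convex sets is convex and closure preserves convexity); hence it contains the closed convex hull of $U_1+U_2$. This direction is essentially immediate.

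For the reverse inclusion, the key point is that $\conv(U_1)+\conv(U_2) \subset \conv(U_1+U_2)$. To see this, take $a=\sum_{i=1}^n \lambda_i a_i \in \conv(U_1)$ with $a_i \in U_1$ and $b=\sum_{j=1}^m \mu_j b_j \in \conv(U_2)$ with $b_j \in U_2$, where $\sum_i \lambda_i = \sum_j \mu_j = 1$ and all coefficients are nonnegative. Then
\[
a+b = \sum_{i=1}^n \sum_{j=1}^m \lambda_i \mu_j (a_i + b_j),
\]
and since $\sum_{i,j} \lambda_i \mu_j = 1$ with $a_i+b_j \in U_1+U_2$, this exhibits $a+b$ as a convex combination of elements of $U_1+U_2$, so $a+b \in \conv(U_1+U_2)$. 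Taking closures gives $\overline{\conv(U_1)+\conv(U_2)} \subset \cconv(U_1+U_2)$, which is the remaining inclusion.

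I do not expect any serious obstacle here; the statement is a standard fact about Minkowski sums and convex hulls, and both inclusions reduce to elementary manipulations. The only mild subtlety worth stating explicitly is that the closure of the algebraic sum of two convex sets is again convex (so that it legitimately contains a closed convex hull), which follows because $\conv(U_1)+\conv(U_2)$ is convex and the closure of a convex set is convex. One could also note that when the sets are bounded this closure is exactly the closure of $\conv(U_1+U_2)$, tying everything together; but the displayed identity as stated follows at once from the two inclusions above.
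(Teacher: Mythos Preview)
Your proof is correct. The argument for both inclusions is clean and the identity $\conv(U_1)+\conv(U_2)\subset\conv(U_1+U_2)$ via the product coefficients $\lambda_i\mu_j$ is the right elementary tool.

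The paper takes a different route: it observes that both sides of the equality are closed convex sets and hence (via Hahn--Banach separation) are determined by their support functions; it then checks that $\sup x^*(\cdot)$ on either side equals $\sup x^*(U_1)+\sup x^*(U_2)$ for every $x^*\in X^*$. Your approach is purely algebraic and avoids invoking separation, so it is arguably more elementary; the paper's approach has the advantage of being one line once the support-function viewpoint is set up, and it makes transparent why boundedness is convenient (finiteness of the suprema) even though, as your argument shows, it is not actually needed for the identity itself.
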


\begin{proof}
Both the right hand side and the left hand side of the equality in
question are convex closed sets, so each of them is the intersection
of all half-spaces that contain it. In other words, in order to prove
the equality it is sufficient to demonstrate that for every $x^* \in
X^*$ 
$$
 \sup x^* (\cconv(U_1 + U_2))  = \sup x^*\big( \overline{\conv (U_1) +
 \conv (U_2)}\big).  
$$
This equality is easily seen to be  true, because its  right hand side
and left hand side are both equal to $\sup x^*\left( U_1\right) + \sup
x^*\left( U_2\right)$. 
\end{proof} 

\begin{Theo} \label{prop-scd-sum-symm}
In every  Banach space $Y$ with the Daugavet property there are convex
closed bounded symmetric  SCD sets (which in the separable case can be
chosen to be bodies) $C_1, C_2 \subset Y$ 
 whose sum $C_1 + C_2$ is not SCD. 
\end{Theo}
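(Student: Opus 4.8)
The plan is to mimic the strategy used for the union example (Theorem~\ref{prop-scd-intersect-symm}), replacing the ``convex hull of union'' identity with Lemma~\ref{lem-convhull-sum}, and using the pair $A,D=-A$ from Theorem~\ref{Prop:sumNotSCD} as the raw material. First I would reduce to the separable case as before (pass to a separable subspace of $Y$ with the Daugavet property via \cite[Theorem~4.5]{KadSW2}), fix a one-codimensional closed subspace $X\subset Y$ which inherits the Daugavet property, and identify $Y$ with $X\oplus_\infty\R$. Inside $X$ take the sets $A$ and $D=-A$ from Proposition~\ref{prop:counterexample}, so that $A,D$ are convex closed bounded SCD bodies with $A\cap D=B_E$ and, by Theorem~\ref{Prop:sumNotSCD}, $A+D$ not SCD. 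Now set $C_1=\Sim(A)=\aconv(A\oplus 1)$ and $C_2=\Sim(D)=\aconv(D\oplus 1)$ inside $(X\oplus_\infty\R)\oplus_\infty\R$ (a space still isomorphic to $Y$). By Lemma~\ref{lem-symm-gener} each $C_i$ is SCD, and since $A,D$ have non-empty interior each $C_i$ is a convex closed bounded symmetric body. Replacing $C_i$ by its closure at the end (Lemma~\ref{Prop:SCDconvexhull}) takes care of closedness.

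The heart of the argument is to show $C_1+C_2$ is not SCD. The key algebraic fact I would establish is
\[
\conv(C_1+C_2)=\Sim(A+D),
\]
i.e.\ $\aconv(A\oplus 1)+\aconv(D\oplus 1)$ has the same closed convex hull as $\aconv\big((A+D)\oplus 2\big)=\Sim(A+D)$ after rescaling the last coordinate. To see ``$\supset$'', note $(A+D)\oplus 2=(A\oplus 1)+(D\oplus 1)\subset C_1+C_2$ and $C_1+C_2$ is convex and symmetric (a sum of symmetric convex sets), hence contains $\aconv$ of $(A+D)\oplus 2$. For ``$\subset$'', observe $C_1\subset\Sim(A+D)$ because $A\oplus 1=(A\oplus 1)+\{0\oplus 0\}$ and $0\oplus 0\in\overline{\aconv}(D\oplus 1)$ (since $D=-A$ is symmetric about... actually here one must be a little careful: $0\in D$ because $0\oplus 0\in A$ gives $0\in A$ so $0\in D$), whence $A\oplus 1\subset(A+D)\oplus 1$ up to the last-coordinate bookkeeping; symmetrically $C_2\subset\Sim(A+D)$, and then $C_1+C_2\subset\Sim(A+D)$ again by convexity-symmetry of the right side. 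The cleanest route is probably to invoke Lemma~\ref{lem-convhull-sum} directly: $\cconv(C_1+C_2)=\overline{\conv(C_1)+\conv(C_2)}=\overline{\Sim(A)+\Sim(D)}$, and then reduce to the elementary identity $\conv(\Sim(U_1)+\Sim(U_2))$ expressed via $\Sim(U_1)+\Sim(U_2)=\aconv(U_1\oplus 1)+\aconv(U_2\oplus 1)$, which (by the same sup-of-functionals computation as in Lemma~\ref{lem-convhull-sum}, now applied coordinatewise in $X$ and in the $\R$-direction) has closed convex hull equal to $\aconv((U_1+U_2)\oplus 1)$ when the last coordinates are rescaled.

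Once $\cconv(C_1+C_2)=\Sim(A+D)$ is in hand, the conclusion is immediate: if $C_1+C_2$ were SCD, then by Lemma~\ref{Prop:SCDconvexhull} so would be its closed convex hull $\Sim(A+D)$, and then by Lemma~\ref{lem-symm-gener} the set $A+D$ itself would be SCD, contradicting Theorem~\ref{Prop:sumNotSCD}. Finally, taking $C_i$ to be the closures of $\Sim(A)$ and $\Sim(D)$ and applying Lemma~\ref{Prop:SCDconvexhull} once more gives closed symmetric bodies with the stated properties, and the non-separable case follows by the reduction in the first paragraph (dropping the ``body'' claim there).

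I expect the only real obstacle to be the bookkeeping in the identity $\conv(C_1+C_2)=\Sim(A+D)$: one has to be scrupulous about the extra $\R$-coordinate that $\Sim$ adjoins, about where the scaling factor $2$ (from adding two ``$\oplus 1$'' pieces) goes, and about the fact that $0\oplus 0\in A\cap D$ is what makes $C_i\subset\Sim(A+D)$ work. Everything else is a verbatim replay of the union argument with Lemma~\ref{lem-convhull-sum} substituted for the ``convex hull of a union'' computation. The Daugavet-property bookkeeping (passing to separable $Y$, one-codimensional $X$ with the Daugavet property, $E\subset X$ one-codimensional with the Daugavet property so that $B_E$ is not SCD) is identical to what has already been done several times above.
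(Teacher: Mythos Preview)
Your overall architecture (reduce to $X\oplus_\infty\R$, take $C_i=\overline{\Sim(B_i)}$ with $B_1=A$, $B_2=D=-A$, and deduce from $C_1+C_2$ being SCD that $A+D$ is SCD) matches the paper's. However, the ``heart'' identity you propose,
\[
\conv(C_1+C_2)=\aconv\big((A+D)\oplus 2\big),
\]
is \emph{false}, and this is not a bookkeeping issue. Take $a=0\oplus\tfrac{1}{\sqrt3}\in A$ (the apex). Since $D=-A$, we have $a\oplus 1\in C_1$ and $-(-a)\oplus(-1)=a\oplus(-1)\in C_2$, so $(2a)\oplus 0\in C_1+C_2$. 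On the other hand, $\aconv((A+D)\oplus 2)=\conv\big(((A-A)\oplus 2)\cup((A-A)\oplus(-2))\big)$, whose slice at the last coordinate $0$ is exactly $A-A$. But $2a\notin A-A$: writing $2a=a'-a''$ with $a',a''\in A$ forces the $e_0$-coordinate of $a'$ to be at least $\tfrac{2}{\sqrt3}>\tfrac{1}{\sqrt3}$, which is impossible. Your justification of the inclusion ``$\subset$'' is also invalid: from $C_1,C_2\subset\Sim(A+D)$ one cannot conclude $C_1+C_2\subset\Sim(A+D)$, since a convex symmetric set need not absorb Minkowski sums of its subsets.

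The paper avoids this trap by \emph{not} collapsing $C_1+C_2$ to a single $\Sim$. Instead it uses Lemma~\ref{lem-convhull-sum} to write
\[
\overline{C_1+C_2}=\cconv\Big(\big((A\oplus 1)\cup-(A\oplus 1)\big)+\big((D\oplus 1)\cup-(D\oplus 1)\big)\Big),
\]
expands the sum of the two unions into four pieces living at levels $2$, $0$, $0$, $-2$ (the level-$0$ pieces are precisely the $(A-D)\oplus 0$ and $(D-A)\oplus 0$ terms that your identity loses), and then applies Lemma~\ref{lem-slice-hyperplane++} to the piece $(A+D)\oplus 2$, which sits in a hyperplane at distance $\ge 2$ from the rest. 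That yields: if $C_1+C_2$ is SCD, then $(A+D)\oplus 2$ is SCD, hence $A+D$ is SCD, a contradiction. Replacing your failed identity with this hyperplane argument fixes the proof.
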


\begin{proof}
As before, we can reduce the situation to a separable space of the
form  $X \oplus_\infty \R$, where $X$ has the Daugavet property. 
Let $B_1 := A$, $B_2 := D$ be  SCD subsets of $X$ from
Theorem~\ref{Prop:sumNotSCD} such that $B_1 + B_2$  is not  SCD, and
take   $C_1 = \overline{\Sim(B_1)}$, $C_2 =
\overline{\Sim (B_2)}$, which are closed  convex  bounded symmetric
SCD bodies. It remains to demonstrate that $C_{1} + C_{2}$
is not SCD.  Using Lemma~\ref{lem-convhull-sum} we can see that 
\[
\begin{split}
\overline{C_{1} + C_{2}} &= \overline{\conv((B_1 \oplus 1) \cup - (B_1 \oplus
1)) +  \conv((B_2 \oplus 1) \cup - (B_2 \oplus 1))} \\  
& = \cconv\left(((B_1 \oplus 1) \cup - (B_1 \oplus 1)) + ((B_2 \oplus
  1) \cup - (B_2 \oplus 1))\right). 
\end{split}
\]
According to  Lemma~\ref{Prop:SCDconvexhull}, it is sufficient to show
that the set 
\[
\begin{split}
((B_1 \oplus 1) \cup - (B_1 \oplus 1)) &+ ((B_2 \oplus 1) \cup - (B_2
\oplus 1)) \\ 
= ((B_1+B_2) \oplus 2)  &\cup  ((B_1-B_2)\oplus 0) \\
& \cup  ((B_2-B_1)\oplus 0) \cup
(-(B_1+B_2) \oplus (-2)) 
\end{split}
\]
is not SCD.  With the help of Lemma~\ref{lem-slice-hyperplane} this
can be deduced from the fact that $B_1 + B_2$  is not  SCD  exactly
the same way as the implication (ii) $\Rightarrow$ (i) of
Lemma~\ref{lem-symm-gener}, because $(B_1+B_2) \oplus 2$ lies in the
hyperplane of those elements whose second coordinate equals~$2$, and the
rest of the set lies at a distance at least~$2$ from that hyperplane. 
\end{proof} 

Before coming to the symmetrization of the non-SCD intersection
example, one more easy remark. 

\begin{Lemm}  \label{lem-convhull-monotonicity} 
Let $U_0, U_1 \subset X$  be  non-empty subsets with  $U_0 \subset
U_1 $, and let $U_1$ be convex. Then, $U_\lambda := \lambda U_1 + (1 -
\lambda) U_0$ increases when $\lambda \in [0, 1]$ increases. 
\end{Lemm}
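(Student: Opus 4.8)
The plan is to verify directly the inclusion $U_\lambda \subset U_\mu$ for all $0 \le \lambda \le \mu \le 1$; the asserted monotonicity of $\lambda \mapsto U_\lambda$ is exactly this statement. If $\mu = 0$ then also $\lambda = 0$ and $U_\lambda = U_0 = U_\mu$, so we may assume $\mu > 0$.

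Fix $z \in U_\lambda$ and write $z = \lambda u_1 + (1-\lambda) u_0$ with $u_1 \in U_1$ and $u_0 \in U_0$. The idea is to keep the same ``$U_0$-part'' and look for a representation $z = \mu v_1 + (1-\mu) u_0$ with $v_1 \in U_1$. Solving this equation for $v_1$ forces
\[
v_1 = \frac{\lambda}{\mu}\, u_1 + \frac{\mu - \lambda}{\mu}\, u_0,
\]
and since $0 \le \lambda \le \mu$ the two coefficients are nonnegative and sum to $1$. As $u_0 \in U_0 \subset U_1$, $u_1 \in U_1$, and $U_1$ is convex, we get $v_1 \in U_1$, hence $z = \mu v_1 + (1-\mu) u_0 \in \mu U_1 + (1-\mu) U_0 = U_\mu$. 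This proves $U_\lambda \subset U_\mu$ and finishes the argument.

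There is essentially no obstacle here beyond handling the degenerate case $\mu = 0$: the entire content is the elementary identity displayed above, which uses only the convexity of $U_1$ together with the inclusion $U_0 \subset U_1$.
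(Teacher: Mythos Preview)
Your proof is correct and is essentially the same as the paper's: both split $\mu = \lambda + (\mu-\lambda)$ and use the convexity of $U_1$ together with $U_0 \subset U_1$ to absorb the extra $(\mu-\lambda)$-part into $U_1$. The only cosmetic difference is that the paper carries out the computation at the level of set inclusions (using $\lambda U_1 + (\mu-\lambda)U_1 = \mu U_1$ for convex $U_1$), while you work elementwise; the underlying identity is identical.
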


\begin{proof}
Let $0 \le \lambda \le \mu\le1$. Then 
\[
\begin{split}
U_\mu=  \mu U_1 + (1 - \mu) U_0  &= 
\lambda U_1 + (\mu - \lambda)U_1 + (1 - \mu) U_0 \\
&\supset \lambda U_1 + (\mu - \lambda)U_0 + (1 - \mu) U_0  \\
&\supset  \lambda U_1 + (1 - \lambda) U_0 = U_\lambda. \qedhere
\end{split}
\]
\end{proof}

Also remark that if $U \subset X$  is convex, then $\Sim(U) \subset X
\oplus_\infty \R$ can be written as  
$$
\Sim(U) = \{(tu - (1-t)v)\oplus(2t-1): u,v \in U, \ t \in [0,1]\} .
$$
In other words, 
\begin{equation*}
\Sim(U) = \bigcup_{t \in [0,1]}(tU - (1-t)U)\oplus(2t-1).
\end{equation*}
This implies the following formula for the intersection of $\Sim(U_1)
\cap \Sim(U_2)$ in the case of convex $U_1, U_2 \subset X$: 
\begin{equation*} \label{formula-symm-intersect1} 
\Sim(U_1) \cap \Sim(U_2) = \bigcup_{t \in [0,1]}\left((tU_1 -
  (1-t)U_1) \cap (tU_2 - (1-t)U_2)\right)\oplus(2t-1). 
\end{equation*}

\begin{Theo}  \label{theo-symm-intersect} 
In every (separable)  Banach space $Y$ with the Daugavet property
there are convex 
closed bounded symmetric sets (bodies)  which are  SCD sets, but  whose
intersection  is not SCD.  
\end{Theo}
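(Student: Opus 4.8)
The plan is to run the same symmetrization machinery that produced the sum and union examples, now applied to the sets $A$ and $D=-A$ from Theorem~\ref{prop-scd-intersect}, whose intersection is $B_E$, which is not SCD. As before, one reduces to the separable case and writes $Y \cong X \oplus_\infty \R$ with $X$ carrying the Daugavet property; take $B_1 := A$, $B_2 := D \subset X$, and set $C_1 = \overline{\Sim(B_1)}$, $C_2 = \overline{\Sim(B_2)}$. By Lemma~\ref{lem-symm-gener} and the fact that $A$, $D$ are convex, bounded and have non-empty interior, $C_1$ and $C_2$ are convex closed bounded symmetric bodies which are SCD. It remains to show $C_1 \cap C_2$ is not SCD; by Lemma~\ref{Prop:SCDconvexhull} it suffices to work with $\Sim(B_1) \cap \Sim(B_2)$.

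Next I would use the formula displayed just before the statement, namely
\[
\Sim(B_1) \cap \Sim(B_2) = \bigcup_{t \in [0,1]}\left((tB_1 - (1-t)B_1) \cap (tB_2 - (1-t)B_2)\right)\oplus(2t-1),
\]
and locate inside it a copy of $B_E$ sitting in the hyperplane $\{z \oplus 0\}$, with the rest of the set a positive distance away. At $t=1/2$ the slice $(2t-1=0)$ contributes $\frac12(B_1 - B_1) \cap \frac12(B_2 - B_2)$. Here one should check, using the explicit shape of $A$ from Proposition~\ref{prop:counterexample} and the remark that $(-x)\oplus t \in A$ whenever $x \oplus t \in A$, that $\frac12(A - A)$ and $\frac12(D-D)$ both contain $B_E$ (indeed $\frac12(B_E \oplus 0 - B_E \oplus 0) = B_E \oplus 0$, and $B_E \oplus 0 \subset A$ since $0 \le 1/\sqrt3$ and $\|x\| \le 1$ for $x \in B_E$; similarly for $D$), so the $t=1/2$ layer already contains $B_E \oplus 0$. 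For the separation, one uses that for $t \ne 1/2$ the layer lives at height $2t-1 \ne 0$, while the $t=1/2$ layer $\frac12(A-A)\cap\frac12(D-D)$ is bounded inside $X$; by Lemma~\ref{lem-convhull-monotonicity} applied with $U_0 = \{0\}$, $U_1 = \frac12(A-A)$ (and noting $\frac12(A-A)$ is convex and contains $0$) the $X$-components of the nearby layers stay controlled, so that $\Sim(B_1)\cap\Sim(B_2)$ decomposes as $(B_E \oplus 0) \cup V$ where $V$ is bounded and $V$ lies away from the hyperplane $\{z \oplus 0\}$ — more precisely, after intersecting with a neighborhood of that hyperplane, the non-$B_E$ part is at positive distance. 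Then Lemma~\ref{lem-slice-hyperplane++} (exactly as in the implication (ii)$\Rightarrow$(i) of Lemma~\ref{lem-symm-gener}, and as used in Theorem~\ref{prop-scd-sum-symm}) forces $B_E \oplus 0$, hence $B_E$, to be SCD, a contradiction with the Daugavet property of $E$ via Lemma~\ref{daug-not-scd}.

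I expect the main obstacle to be pinning down the structure of $\Sim(B_1)\cap\Sim(B_2)$ near the hyperplane $\{z \oplus 0\}$ precisely enough to apply Lemma~\ref{lem-slice-hyperplane++}: the set is not simply a disjoint union of "$B_E$ in a hyperplane" plus "something far away", since the layers at $t$ close to $1/2$ approach the hyperplane. The fix is to observe that it is enough that $B_E \oplus 0$ is a \emph{maximal face} in a suitable sense — concretely, that $\Sim(B_1)\cap\Sim(B_2)$ contains $B_E \oplus 0$, is contained in $\frac12(A-A) \oplus [-1,1]$ or a similar box, and that no point with $\|z\| $ close to the sup over $B_E$ and height near $0$ other than those of $B_E$ lies in the intersection; since $\frac12(A-A)$ and $\frac12(D-D)$ genuinely differ away from $E$ (one uses the $3t^2$ term, which makes the "waist" of $A$ at $t=0$ exactly $B_E$ and strictly thinner in the $e_0$-direction), the overlap of $tB_1-(1-t)B_1$ and $tB_2-(1-t)B_2$ for $t\ne 1/2$ projects into a strict sub-ball of $B_E$, uniformly bounded away from $S_E$ as $|2t-1|$ grows. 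That monotone-in-$t$ shrinking, guaranteed by Lemma~\ref{lem-convhull-monotonicity}, is precisely what lets one separate $B_E \oplus 0$ from the rest and conclude as before.
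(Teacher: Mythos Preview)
Your approach has a genuine gap: the geometric picture you rely on is incorrect, and Lemma~\ref{lem-slice-hyperplane++} simply cannot be applied here.

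First, the layer at height $0$ (i.e., $t=1/2$) of $\Sim(A)\cap\Sim(-A)$ is $\tfrac12(A-A)\cap\tfrac12((-A)-(-A))=\tfrac12(A-A)$, which \emph{strictly} contains $B_E$: since $0\oplus(1/\sqrt3)\in A$ and $0\in A$, the set $\tfrac12(A-A)$ has positive extent in the $e_0$-direction. So $B_E\oplus 0$ is not a face of the intersection at all, and there is no hyperplane in $X\oplus_\infty\R$ that contains $B_E\oplus 0$ while leaving the rest of $\Sim(A)\cap\Sim(-A)$ at positive distance on one side. Second, your ``fix'' rests on the claim that for $t\neq 1/2$ the layer $A_t=(tA-(1-t)A)\cap((1-t)A-tA)$ projects into a strict sub-ball of $B_E$. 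This is false: since $B_E\subset A$ and $B_E=-B_E$, one has $tB_E-(1-t)B_E=B_E\subset tA-(1-t)A$ and likewise $B_E\subset(1-t)A-tA$, so $B_E\subset A_t$ for \emph{every} $t\in[0,1]$. The layers never shrink below $B_E$; they only grow from $B_E$ at $t\in\{0,1\}$ up to $\tfrac12(A-A)$ at $t=1/2$. Consequently the bottom and top faces $B_E\oplus(\pm1)$ are the only copies of $B_E$, and they are approached continuously by nearby layers, again ruling out the positive-distance hypothesis of Lemma~\ref{lem-slice-hyperplane++}.

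The paper does not attempt any such separation. Instead it argues directly: assuming $W=\Sim(A)\cap\Sim(-A)$ is SCD with determining slices $S_m=S(W,w_m^*,\eps_m)$, it restricts each defining functional $w_m^*=x_m^*\oplus\tau_m$ (with $x_m^*$ corresponding to $e_m^*\in E^*$ and a scalar $s_m$) and shows that the induced slices $S(B_E,e_m^*,\eps_m)$ are determining for $B_E$. The key computation uses the inclusion $A_{t_m}\supset(1-t_m)A+t_m B_E$ (from Lemma~\ref{lem-convhull-monotonicity}) together with the triple mirror symmetry of $W$ to normalize signs, then reaches a contradiction by producing, from any $e\in S(B_E,e_m^*,\eps_m)\setminus S(B_E,e^*,\eps)$, an explicit point of $S_m$ on which $e^*$ is too small. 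This hands-on slice comparison is the missing idea in your proposal.
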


\begin{proof}
Again,  it is sufficient to consider a separable space of the form  $X
\oplus_\infty \R$, where $X$ has the Daugavet property.  Let $A
\subset X$ be as in Proposition~\ref{prop:counterexample}. Denote $U_1
= A$, $U_2 = -A$.  We are going to demonstrate that  $\Sim(U_1) ,
\Sim(U_2) \subset X \oplus_\infty \R$ are the requested  non-empty
bounded  convex symmetric SCD bodies such that  $W:=\Sim(U_1) \cap
\Sim(U_2)$ is not SCD.  

Each element of $X$ is of the form $e + te_0$, $e \in E$, $t \in \R$,
and in  order to avoid misunderstanding we will not use the expression $e
\oplus t$ for  $e + te_0$ in the current proof. The notation  $x
\oplus t$ is reserved for elements  of $X \oplus_\infty \R$, and $x^*
\oplus \tau$ for elements of  $(X \oplus_\infty \R)^* = X^* \oplus_1
\R$. 

For every $t \in [0,1]$ denote $A_t = (tA - (1-t)A) \cap ( (1-t)A -  tA)$.  Then,
\begin{equation} \label{formula-symm-intersect2} 
W = \bigcup_{t \in [0,1]}\left(A_t \oplus(2t-1)\right).
\end{equation}
Geometrically this means that the lowest level section (with $t = 0$)
of $W$ is the set $(A\cap-A) \oplus(-1) = B_E \oplus(-1) $, when we
move to higher levels the section transforms up to $\frac{A - A}{2}
\oplus 0$ when $t = \frac12$, and then transforms back until
$(A\cap-A) \oplus 1 = B_E \oplus 1 $  when $t = 1$. The set $W$ is not
only centrally  symmetric with respect to zero, but also doubly
mirror-symmetric in the following sense: for every $e \in E$, $a, b
\in \R$, if $(e + ae_0) \oplus b \in W$, then $(\pm e  \pm ae_0)
\oplus (\pm b) \in W$ for all choices of $\pm$.

Let us assume to the  contrary that $W$ is SCD. From this assumption
we are going to deduce that  $B_E$ is SCD, which will be the desired
contradiction. Let $S_n=S(W, w_n^*, \eps_n)$ form a determining
sequence of slices of $W$,  $w_n^* = x_n^* \oplus \tau_n$.  Denote
also  $e_n^* \in E^*$ and $s_n \in \R$ those elements that represent
the corresponding $x_n^*$, i.e.,  $x_n^*(e + te_0)= e_n^*(e) + s_n t $
for all  $e \in E$, $t \in \R$. By the Bishop-Phelps theorem the set of 
functionals that attain their supremum on $W$ is norm-dense in the dual space, 
consequently, by a small perturbation argument,  we may
assume that each $w_n^*$ attains its supremum $R_n$ on  $W$ 
at some point $w_n = x_n \oplus b_n= (e_n + a_ne_0) \oplus b_n \in W$, 
$b_n = 2t_n - 1$, that is 
$$
R_n := \sup_{w \in W} w_n^*(w) =  x_n^*(x_n) + \tau_n b_n = e_n^*(e_n)
+ s_n a_n + \tau_n b_n. 
$$
We are going to show that  $\tilde S_n = S(B_E, e_n^*, \eps_n)$, $n
\in \N$, form  a determining sequence of slices of $B_E$. Fix an
arbitrary $e^* \in S_{E^*}$ and $\eps \in (0, 1)$. According to
Lemma~\ref{len-H-B-scd}, our task is to find an  $n \in \N$ such that
$\tilde S_n \subset S(B_E, e^*, \eps)$. Let us extend $e^*$ to the
whole  $X \oplus_\infty \R$ by the natural rule $e^*((e +
t_1e_0)\oplus t_2) := e^*(e)$ and consider the corresponding slice
$S(W, e^*,  \frac{\eps}{2})$. Due to the same  Lemma~\ref{len-H-B-scd}
there is an $m \in \N$ such that  $ S_m \subset S(W, e^*,
\frac{\eps}{2})$. Remark  that the corresponding $e_m^*$ is non-zero,
otherwise with every point $ (e + c e_0)\oplus d$ the slice $S_m$
would contain also $ ( ce_0)\oplus d$, thus contradicting the
inclusion $ S_m \subset S(W, e^*,  \frac{\eps}{2})$. 
Without loss of generality we may assume that $s_m,
\tau_m \ge 0$ (here we use the symmetry of $W$ and of $S(W, e^*,
\frac{\eps}{2})$ with respect to corresponding changes of signs). Then
we can also assume $a_m, b_m \ge 0$ and consequently $t_m \ge
\frac12$. 

By the definition, $x_m^*(x_m) = \sup x_m^*\left( A_{t_m}  \right)$.
We claim  that  in fact 
\begin{equation} \label{formula-xm*xm} 
x_m^*(x_m) = \sup x_m^*\left( (1 - t_m) A + t_m B_E  \right)= (1 -
t_m)  \sup x_m^*(A) + t_m \|e_m^*\|. 
\end{equation}
Indeed, $A_{t_m}  = ({t_m} A - (1-{t_m} )A) \cap ( (1-{t_m} )A -
{t_m} A) \subset (1-{t_m} )A -  {t_m} A$, so   $x_m \in A_{t_m}$ has a
representation of the form $x_m =  (1-{t_m} ) y - t_m z$ with $y, z
\in A$. Consequently,  
\[
\begin{split}
x_m^*(x_m) & =   (1-{t_m}) x_m^*(y)  + t_m  x_m^*(-z) \\
& \le (1 - t_m) \sup x_m^*(A)  + t_m \sup x_m^*(-A) \\
&=  (1 - t_m)  \sup x_m^*(A) + t_m \|e_m^*\|,
\end{split}
\]
where we used   the positivity of $s_m$ in the last step. For the
reverse inequality in \eqref{formula-xm*xm}  we can use the inclusion
$ A \supset B_E$, the inequality $t_m \ge 1 - t_m$ and
Lemma~\ref{lem-convhull-monotonicity} which together give us the
inclusion $ (1-{t_m} )A + {t_m} B_E \subset {t_m} A + (1-{t_m} )
B_E$. This implies that   
\begin{align}  \label{inclusion-Atm} 
\nonumber \qquad \qquad A_{t_m}   &= ({t_m} A - (1-{t_m} )A) \cap (
 (1-{t_m} )A -  {t_m} A)  \\ 
 & \supset  ({t_m} A - (1-{t_m} )B_E) \cap ( (1-{t_m} )A -  {t_m} B_E)  \\ 
\nonumber &=  (t_m A + (1-{t_m} )B_E) \cap ( (1-{t_m} )A +  {t_m} B_E) \\
\nonumber & \supset  (1 - t_m) A + t_m B_E, 
\end{align}
 so 
$$
x_m^*(x_m) = \sup x_m^*\left( A_{t_m}  \right) \ge  \sup x_m^*\left(
  (1 - t_m) A + t_m B_E  \right). 
$$
Thus, the formula \eqref{formula-xm*xm}  is proved. It remains to
prove that  $\tilde S_m \subset S(B_E, e^*, \eps)$, or in other words
that $\tilde S_m \setminus S(B_E, e^*, \eps) = \emptyset$.  Assume
that this set is not empty, and pick an arbitrary $e \in \tilde S_m
\setminus S(B_E, e^*, \eps) $. Then $e \in B_E$ and $e$ satisfies
simultaneously two inequalities: 
\begin{equation} \label{dva-uslovija} 
e_m^*(e) > \|e_m^*\| - \eps_m, \quad \textrm{ and } \quad e^*(e) \le 1 - \eps.
\end{equation}
Take an arbitrary $g \in A$ with $x_m^*(g) > \sup x_m^*(A) -
\eps_m$. According to \eqref{inclusion-Atm}, $(1 - t_m) g + t_m e \in
A_{t_m} $, so    
$$
((1 - t_m) g + t_m e)   \oplus  b_m  = ((1 - t_m) g + t_m e)   \oplus
(2 t_m - 1)\in W. 
$$  
Then, the following inequality
\[
\begin{split}
w_m^*(((1 - t_m) g + t_m e)  \oplus  b_m) 
&=   x_m^*((1 - t_m) g + t_m e) + \tau_m b_m \\
& > (1 - t_m)( \sup x_m^*(A) -   \eps_m) + t_m (\|e_m^*\| -\varepsilon_{m})  + \tau_m b_m \\
&= x_m^*(x_m) + \tau_m b_m  - \eps_m =R_m - \eps_m
\end{split}
\]
implies that    $((1 - t_m) g + t_m e)   \oplus  b_m \in S_m $, and
consequently  $((1 - t_m) g + t_m e)   \oplus  b_m \in  S(W, e^*,
\frac{\eps}{2})$. This means that  
$$
(1 - t_m)e^*(g) +  t_me^*(e)   =  e^*(((1 - t_m) g + t_m e)  \oplus
b_m) > 1 -    \frac{\eps}{2}. 
$$
Together with the second condition from \eqref{dva-uslovija} this gives
$$
1 -    \frac{\eps}{2} < (1 - t_m)e^*(g) +  t_me^*(e) <  (1 - t_m) +
t_m(1 - \eps) = 1 - t_m \eps \le 1 -    \frac{\eps}{2}. 
$$
This contradiction proves that $\tilde S_m \setminus S(B_E, e^*, \eps)
= \emptyset$.   
\end{proof}

%%%%%%%%%%%%%%%%%%%%%%%%%%%%%%%
%%%%%%%%%%%%%%%%%%%%%%%%%%%%%%%
%%%%%%%%%%%%%%%%%%%%%%%%%%%%%%%
%%%%%%%%%%%%%%%%%%%%%%%%%%%%%%

\section{Relationship between SCD and aSCD sets}\label{sec4}

In this section we prove two main results that answer \cite[Question
7.5]{SCDsets}. Namely, we demonstrate that  the properties  aSCD and
SCD are not equivalent for general bounded closed convex sets, but in
the most important case for the applications, namely that  of balanced
bounded closed convex sets, the equivalence holds true. 

\subsection{aSCD and SCD are not equivalent}

In order to present the promised example  of an aSCD set that is not
SCD, we need some more preparatory work. We keep the  notation from
Proposition~\ref{prop:counterexample}. In particular,  $X$  is the
direct sum of its subspace $E$ and a one-dimensional subspace $ \spn
e_0$ equipped with an equivalent norm in which it can be identified
with $E \oplus_{\infty} \mathbb{R}$,  and $E$ possesses the Daugavet
property.

\begin{Lemm}\label{Lemm:counterexample2}
Denote 
\[ 
\begin{split}
C_1  &=  \left\{ y \oplus t \in X \colon t \in \left[0, \frac{1}{\sqrt
      3}\right], \, \| y\| \le  \sqrt\frac{1 -  3 t^{2}}{1 + t^2}
\right\}, \\ 
C_2  &=   \left\{ y \oplus t  \in X \colon t \in \left[0,
      \frac{1}{\sqrt 3}\right], \, \| y\| \le \sqrt{1 - 3 t^{2}}
      \right\}. 
\end{split}
\]
Then for every subset $C \subset X$ satisfying $ C_1 \subset C \subset C_2$ and
for every slice $S$ of $C$ we can find another slice $S' = S(C,
y^{\ast} \oplus \lambda, \varepsilon) \subset S$ with $\lambda > 0$
and $S' \cap B_{E} = \emptyset$.  
\end{Lemm}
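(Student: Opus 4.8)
The plan is to analyze a slice $S = S(C, z^*, \eta)$ of $C$ and perturb its defining functional so that the new, smaller slice is forced to sit strictly above the hyperplane $\{t = 0\}$ containing $B_E$. Write $z^* = z_0^* \oplus \mu \in X^* = E^* \oplus \R$, where $z_0^*$ acts on $E$ and $\mu$ is the coefficient of $e_0$. The key geometric observation is that the sets $C_1, C_2$ (and hence $C$) are symmetric under $t \mapsto -t$ only trivially — actually they live in $t \in [0, 1/\sqrt3]$ — and the ``cap'' of $C$ near its top, where $t$ is close to $1/\sqrt3$, is far from $B_E = \{y \oplus 0 : \|y\|\le 1\}$. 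So the strategy is to push $S$ toward that top cap.

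First I would replace $z^*$ by $z^* + s(0 \oplus 1) = z_0^* \oplus (\mu + s)$ for large $s > 0$; since $C \subset C_2$ and $C \supset C_1$, and both $C_1, C_2$ meet the top level $t$ close to $1/\sqrt3$ (on $C_1$ one can take $y = 0$, $t = 1/\sqrt 3$, which lies in $C_1 \subset C$), the supremum of $0 \oplus 1$ over $C$ equals $1/\sqrt3$ and is attained near the top. For $s$ large enough the slice $S(C, z_0^* \oplus (\mu+s), \varepsilon)$ with a suitably small $\varepsilon$ will consist only of points with $t > 0$, in fact with $t$ bounded below by a positive constant, because the term $(\mu+s)t$ dominates the bounded quantity $z_0^*(y)$ (here $\|y\| \le 1$ on all of $C$). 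This gives $\lambda := \mu + s > 0$ and $S' \cap B_E = \emptyset$. The remaining point is to guarantee $S' \subset S$: this is where I would invoke the by-now-standard Hahn–Banach/perturbation argument of Lemma~\ref{len-H-B-scd} — one must choose $\varepsilon$ small enough that $S(C, z_0^* \oplus (\mu+s), \varepsilon) \subset S(C, z^*, \eta)$. Since adding $s(0\oplus 1)$ changes the functional by a controlled amount and its sup is shifted by $s/\sqrt3$-ish, a direct estimate: any point in the new slice has $z^*$-value close to its maximum, hence (subtracting the $t$-contribution, which is pinned near its max on the new slice) lies in the old slice $S$; concretely one picks $\varepsilon$ depending on $\eta$, $\|z^*\|$, and the original slice width.

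The main obstacle I anticipate is the interplay of the two requirements on $S'$: it must simultaneously lie inside $S$ (forcing $\varepsilon$ small) and avoid $B_E$ (which is automatic once $t$ is bounded away from $0$ on $S'$, but one needs to verify that shrinking $\varepsilon$ does not accidentally allow points with $t = 0$ to survive — it does not, because on $\{t=0\} \cap C = B_E$ the functional $z_0^* \oplus (\mu + s)$ takes value $z_0^*(y) \le \|z_0^*\|$, whereas its supremum over $C$ is at least $(\mu+s)/\sqrt3 - \|z_0^*\|$ coming from the top cap, and for $s$ large the gap exceeds any fixed small $\varepsilon$). So the order of quantifiers is: given $S$ of width $\eta$ with functional $z^*$, first fix $s$ large (depending only on $\|z^*\|$ and the geometry of $C_1, C_2$) so that the top cap beats $B_E$ by a definite margin; then fix $\varepsilon$ small (depending on $\eta$, $s$, $\|z^*\|$) so that $S' := S(C, z_0^* \oplus (\mu+s), \varepsilon)$ is contained in $S$ and has $t$ bounded below by a positive constant. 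With $\lambda := \mu + s > 0$ and $y^* := z_0^*$, this $S'$ is exactly of the required form $S(C, y^* \oplus \lambda, \varepsilon)$ with $\lambda > 0$ and $S' \cap B_E = \emptyset$, completing the proof.
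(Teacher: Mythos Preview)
There is a genuine gap: the two constraints on $S'$ pull in opposite directions, and your order of quantifiers cannot satisfy both. Your argument for $S'\cap B_E=\emptyset$ rests on the crude top-vertex bound $\sup_C(z_0^*\oplus(\mu+s))\ge(\mu+s)/\sqrt3$, which forces $\mu+s>\sqrt3\,\|z_0^*\|$; but with $s$ that large, no $\varepsilon>0$ gives $S'\subset S$ in general. Take $\mu=0$, $\|z_0^*\|=1$, and $\eta$ small, so $\sup_C z^*=1$ is approximated only on $B_E$. For any $s\ge\sqrt3$ and any $y\oplus t\in C\subset C_2$ one has $z_0^*(y)+st\le h(\|y\|)$ where $h(u)=u+s\sqrt{(1-u^2)/3}$ is strictly concave on $[0,1]$ with $h(1)=1$; meanwhile $M:=\sup_C(z_0^*\oplus s)\ge\sup_{C_1}(z_0^*\oplus s)>1$ (since $g(t)=\sqrt{(1-3t^2)/(1+t^2)}+st$ has $g(0)=1$ and $g'(0)=s>0$). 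Hence every point $y\oplus t\in S'$ satisfies $h(\|y\|)>1$, so $\|y\|$ is bounded away from $1$ by a constant independent of $\varepsilon$, and thus $z_0^*(y)\le\|y\|<1-\eta$ for $\eta$ small. So $S'\not\subset S$ no matter how small $\varepsilon$ is. Your heuristic ``subtracting the $t$-contribution, which is pinned near its max'' fails exactly because the near-maximizers of $z^*$ and of $z^*+s(0\oplus1)$ live in different regions of $C$.

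The paper resolves the tension by separating the two steps. Claim~1 shows that every slice contains a sub-slice with $\lambda>0$ via a \emph{small} perturbation (in the critical case $\mu=0$ one replaces $y^*\oplus0$ by $y^*\oplus(\eta/2)$; the cases $\mu<0$ are reduced to this), so that $S'\subset S$ is immediate. Claim~2 then shows that for \emph{every} $\lambda>0$, however small, $\sup_C(y^*\oplus\lambda)>\|y^*\|$, using precisely the derivative observation $g'(0)=\lambda>0$ together with $C\supset C_1$; hence a sufficiently thin slice avoids $B_E$. The idea you are missing is that one does not need $\lambda$ large to beat $B_E$: any positive $\lambda$ works, because the boundary of $C_1$ leaves the level $\{t=0\}$ with strictly positive slope in the $t$-direction.
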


\begin{proof}
 We are going to prove two claims from which we can deduce the result easily.

\textbf{Claim 1}: \emph{Every slice of $C$ contains a slice of the
  form $S(C, x^{\ast}, \varepsilon)$ where $x^{\ast} = y^{\ast} \oplus
  \lambda$ with $\lambda > 0$}.

Let $S=S(C, x^{\ast}, \varepsilon)$ be a slice of $C$ with $x^{\ast} =
y^{\ast} \oplus \lambda \neq 0$, and $\lambda \le 0$.  We are going to
distinguish three cases: 

\noindent \textbf{(A)}  $\lambda = 0$. Without loss of generality we
can take $\| y^{\ast}\|= 1$, and consequently $\sup\{x^{\ast}(a): a
\in C\} = 1$.   In this case  $S(C, y^{\ast} \oplus (\varepsilon/2),
\varepsilon/2)$ satisfies that each of its elements $x \oplus t$ has
the property 
\[ 
y^{\ast}(x) + \frac{\varepsilon}{2} \ge \ y^{\ast}(x) + t
\frac{\varepsilon}{2} \ge \sup\left\{\langle y^{\ast} \oplus
  \frac{\varepsilon}{2}, a \rangle: a \in C \right\} -
\frac{\varepsilon}{2} \ge 1 - \frac{\varepsilon}{2}, 
\] 
that is $y^{\ast}(x)  \ge 1 - \eps$. 
Therefore $S(C, y^{\ast} \oplus (\varepsilon/2), \varepsilon/2)
\subset S(C, x^{\ast}, \varepsilon)$. 

\noindent  \textbf{(B)} $ y^{\ast} = 0$, $\lambda < 0$.  Without loss
of generality we can take $\lambda = - 1$. In this case 
\[ 
S(C, x^{\ast}, \varepsilon) = \left\{x \oplus t \in C: t \in
  [0,\eps)    \right\}.
\] 
Take an arbitrary $ e^{\ast} \in S_{E^*}$. Taking into  account that
all elements $x \oplus t \in C$ satisfy  $ t  \le \sqrt\frac{1 -
  \|x\|^{2}}{3}$, we obtain that  for all $ \delta < \eps^2$  
\[ 
\begin{split}
S(C, e^{\ast} \oplus 0, \delta) &=  \left\{x \oplus t \in C: e^*(x) >
  1 - \delta   \right\} \\  
&\subset \left\{x \oplus t \in C:  \|x\| > 1 - \delta   \right\} \\
&\subset S(C, x^{\ast}, \varepsilon),
\end{split}
\]
which reduces the problem to the case (A).

\noindent  \textbf{(C)}\,  $ y^{\ast} \neq 0$, $\lambda < 0$. Again,
without loss of generality we can take $\| y^{\ast}\|= 1$, and  since
$\lambda$  is negative,  $\sup\{x^{\ast}(a): a \in C\} = \| y^{\ast}\|
= 1$. Fix an $\eta > 0$ small enough to have  $\eta^{2} - \lambda \eta
< \eps$.  In this case the slice $S(C, y^{\ast} \oplus 0, \eta^{2})$
satisfies that each of its elements $x \oplus t$ has the property 
\[ 
\sqrt{1 - 3t^{2}} \ge y^{\ast}(x) > 1- \eta^{2} 
\]
which yields that $t < \eta$ and so
\[ 
\langle y^{\ast} \oplus \lambda, x \oplus t \rangle = y^{\ast}(x) +
\lambda t \ge 1 - \eta^{2} + \lambda \eta> 1 - \eps. 
\] 
We obtain that 
$$
S(C, y^{\ast} \oplus 0, \eta^{2}) \subset S(C, y^{\ast} \oplus
\lambda, \varepsilon), 
$$ 
which again  reduces the problem to the case (A).

\textbf{Claim 2}:  \emph{For every $x^{\ast} = y^{\ast} \oplus \lambda
  \in X^{\ast}$ with $\lambda > 0$ there is $\varepsilon > 0$ such
  that } 
$$
S(C, y^{\ast} \oplus \lambda, \varepsilon) \cap B_{E} = \emptyset.
$$

 If $y^{\ast} = 0$  then the slice  $S(C, 0 \oplus \lambda,
 \frac{\lambda}{2\sqrt 3})$ does not intersect $ B_{E}$. It remains to
 consider  $y^{\ast} \neq 0$.  Without loss of generality we can
 assume that $\| y^{\ast}\| = 1$. Then 
\[ 
\begin{split}
\sup\{x^{\ast}(a): a \in C\}  
& \ge \sup\{x^{\ast}(a): a \in C_1\} \\ 
&= \sup_{t \in [0, \frac{1}{\sqrt{3}}]}\, { \sup_{\| x\| \le
    \sqrt{\frac{1 - 3t^{2}}{1+t^{2}}}}\{y^{\ast}(x) + \lambda t}\} \\ 
& = \sup_{t \in [0, \frac{1}{\sqrt{3}}]}\left\{\sqrt{\frac{1 -
      3t^{2}}{1+t^{2}}} + \lambda t\right\}.  
\end{split}
\]
Denote  $g(t)=\sqrt{\frac{1 - 3t^{2}}{1+t^{2}}} + \lambda t$. 
It is standard to check that $g'(0) = \lambda> 0$ so the  supremum of
$g(t)$ on  $ [0, \frac{1}{\sqrt{3}}]$ is strictly greater than $ g(0)
= 1$. We can then write this  supremum as $1 + \delta_{\lambda}$ for
some $\delta_{\lambda} > 0$. Finally, we are going to check that $S(C,
y^{\ast} \oplus \lambda, \delta_{\lambda} / 2)$ satisfies the desired
property: for each $x  \in B_{E}$ we have that  
\[ 
\langle y^{\ast} \oplus \lambda, x \rangle =  y^{\ast}(x)  \le 1 <
1 + \frac{\delta_{\lambda}}{2} \le  \sup\{x^{\ast}(a): a \in C\} -
\frac{\delta_{\lambda}}{2}. 
\] 
This finishes the proof of Claim~2 and of our Lemma.
\end{proof}

Remark  that according to (a) and (b) of
Proposition~\ref{prop:counterexample}, the set $A$ satisfies the
condition $C_1 \subset A \subset C_2$ 
of Lemma~\ref{Lemm:counterexample2}. Since $A$ is SCD, this
implies the following corollary. 

\begin{Prop}\label{Prop:counterexample2}
Let $X$ and $A$ be as in Proposition~\ref{prop:counterexample}. Then,
we can choose a determining sequence of slices $S_{n}= S(A,
x_{n}^{\ast}, \varepsilon_{n})$   of $A$ such that $x_{n}^{\ast} =
y_n^{\ast} \oplus \lambda_{n}$ with $\lambda_{n} > 0$ and moreover
$S_{n} \cap  E  = \emptyset$. 
\end{Prop}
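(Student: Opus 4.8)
The plan is to leverage the fact, established in Proposition~\ref{prop:counterexample}(e), that $A$ is SCD, hence admits \emph{some} determining sequence of slices $T_n = S(A, z_n^\ast, \delta_n)$, and then to \emph{refine} each $T_n$ into a slice of the special shape required, using Lemma~\ref{Lemm:counterexample2}. The principle that makes this legitimate is Lemma~\ref{len-H-B-scd}: a sequence of slices of $A$ is determining precisely when every slice of $A$ contains one of its members; in particular, replacing each slice in a determining sequence by a \emph{smaller} slice again yields a determining sequence.

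Concretely, I would first record that by parts (a) and (b) of Proposition~\ref{prop:counterexample} the set $A$ satisfies $C_1 \subset A \subset C_2$ in the notation of Lemma~\ref{Lemm:counterexample2} (this is exactly the remark preceding the present proposition), so that lemma applies with $C = A$. Given the determining sequence $(T_n)$, apply Lemma~\ref{Lemm:counterexample2} to each $T_n$ to obtain a slice $S_n = S(A, x_n^\ast, \varepsilon_n) \subset T_n$ with $x_n^\ast = y_n^\ast \oplus \lambda_n$, $\lambda_n > 0$, and $S_n \cap B_E = \emptyset$. Since $A \cap E = B_E$ (an element $x \oplus 0$ lies in $A$ if and only if $\|x\| \le 1$, because $\|\cdot\|_0 = \|\cdot\|$) and $S_n \subset A$, the condition $S_n \cap B_E = \emptyset$ is the same as $S_n \cap E = \emptyset$, so the $S_n$ already have all the stated properties individually.

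It then remains to check that $(S_n)$ is still determining. Fix an arbitrary slice $S$ of $A$. Because $(T_n)$ is determining, Lemma~\ref{len-H-B-scd} yields an index $n$ with $T_n \subset S$; hence $S_n \subset T_n \subset S$. Thus every slice of $A$ contains some $S_n$, and a second application of Lemma~\ref{len-H-B-scd} shows that $(S_n)$ is determining, completing the proof.

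I do not expect any real obstacle here: the entire substance has been absorbed into Lemma~\ref{Lemm:counterexample2}, and the proposition is merely the packaging of that lemma together with the ``a refinement of a determining sequence is still determining'' observation. The only point deserving a word of care is the identification $A \cap E = B_E$, which is immediate from the defining inequality of $A$ evaluated at $t = 0$.
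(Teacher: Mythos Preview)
Your proof is correct and matches the paper's approach exactly: the paper presents this proposition as an immediate corollary of Lemma~\ref{Lemm:counterexample2} once one observes (as you do) that $C_1 \subset A \subset C_2$ and that $A$ is SCD, with the refinement argument via Lemma~\ref{len-H-B-scd} left implicit. You have simply spelled out the details the paper omits, including the identification $A \cap E = B_E$ needed to pass from $S_n \cap B_E = \emptyset$ to $S_n \cap E = \emptyset$.
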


Now we are ready for the main result of the subsection that answers
\cite[Question~7.5]{SCDsets} in the negative. 

\begin{Theo}\label{Theo:aSCDcounterexample}
Let $X$ be a Banach space with the Daugavet property. There is a
convex and closed set $\tilde C \subset X$ which is aSCD but not SCD. 
\end{Theo}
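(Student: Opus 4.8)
The plan is to take $\tilde C=\overline{\conv}(A\cup D_0)$, where $A$ is the set of Proposition~\ref{prop:counterexample} and $D_0\subset-A$ is a carefully constructed (and, as will turn out, necessarily non-convex) subset of the reflected set. Two geometric facts come first. Since $-A$ is the reflection of $A$ and the two sets share the face $B_E\oplus 0$, every segment joining a point of $A$ to a point of $-A$ crosses the level $t=0$ inside $B_E\oplus 0\subset A\cap(-A)$; hence $A\cup(-A)$ is already convex, so $\tilde C\subset A\cup(-A)$, one has $\tilde C\cap\{x\oplus t:t>0\}=A\cap\{x\oplus t:t>0\}$, and $\tilde C\cap\{x\oplus t:t\le 0\}=\overline{\conv}((B_E\oplus 0)\cup D_0)=:Z$. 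These identities describe the slices of $\tilde C$ according to the sign of the $e_0$-component of the defining functional.

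The aSCD direction is the easy one and works for every $D_0\subset-A$. I would use the determining sequence of slices $S_n=S(A,x_n^*,\varepsilon_n)$ of $A$ produced by Proposition~\ref{Prop:counterexample2}, with $x_n^*=y_n^*\oplus\lambda_n$, $\lambda_n>0$, and $S_n\cap E=\emptyset$. Because $\lambda_n>0$ and $-A\subset\{t\le 0\}$, the functional $x_n^*$ has $\sup_{-A}x_n^*=\|y_n^*\|$, attained only at the reflected waist, whereas $\sup_A x_n^*>\|y_n^*\|$; put $\gamma_n:=\sup_A x_n^*-\|y_n^*\|>0$, so that also $\sup_{\tilde C}x_n^*=\sup_A x_n^*$. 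Then, using $\tilde C\cap\{t\le 0\}\subset-A$, for every $k$ with $\varepsilon_n/k<\gamma_n$ any point of $\tilde C$ on which $x_n^*$ exceeds $\sup_A x_n^*-\varepsilon_n/k$ must have positive $e_0$-component, hence lies in $A$; thus $S(\tilde C,x_n^*,\varepsilon_n/k)=S(A,x_n^*,\varepsilon_n/k)\subset A$ is an honest slice of $A$. Take as candidate determining family for $\tilde C$ the countable set $\{S(\tilde C,x_n^*,\varepsilon_n/k):n,k\in\N,\ \varepsilon_n/k<\gamma_n\}$. If $B$ meets all these slices, then for an arbitrary slice $S(A,x^*,\varepsilon)$ of $A$, Lemma~\ref{len-H-B-scd} applied to $(S_n)$ yields $n_0$ with $S(A,x_{n_0}^*,\varepsilon_{n_0})\subset S(A,x^*,\varepsilon)$, and for $k$ large enough $S(\tilde C,x_{n_0}^*,\varepsilon_{n_0}/k)\subset S(A,x_{n_0}^*,\varepsilon_{n_0})\subset S(A,x^*,\varepsilon)$; hence $B$ meets $S(A,x^*,\varepsilon)$ inside $A$. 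By Lemma~\ref{len-H-B-scd} this forces $A\subset\overline{\conv}(B)$, whence $-A\subset-\overline{\conv}(B)\subset\overline{\aconv}(B)$, so $D_0\subset\overline{\aconv}(B)$, and finally $\tilde C=\overline{\conv}(A\cup D_0)\subset\overline{\aconv}(B)$. So $\tilde C$ is aSCD.

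The non-SCD direction is where $D_0$ has to be chosen with care, and it is the crux. Naive choices fail: if $D_0$ is a tame convex subset of $-A$, then the only non-strictly-convex boundary piece available in $-A$ is at $t=0$, it is covered from above by the huskable points of $A$, and one checks, exactly as in the proof of Proposition~\ref{prop:counterexample}(e), that $\tilde C$ is huskable and hence SCD. In the spirit of Proposition~\ref{Prop-non-conv-det-ex} I would instead take $D_0$ to be a non-convex, ``Daugavet-fractal'' countable subset of $-A$ accumulating at the reflected waist, arranged so that $Z=\overline{\conv}((B_E\oplus 0)\cup D_0)$ contains a copy of the unit ball $B_F$ of a one-codimensional subspace $F\subset E$ --- which again has the Daugavet property by \cite[Theorem~2.14]{KadSSW} --- positioned so that some point $x_*$ of it becomes, in a suitable sense, a Daugavet point of $\tilde C$: every slice of $\tilde C$ reaching a neighbourhood of $x_*$ traces out on $B_F$ a genuine slice of $B_F$. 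Then, assuming $(Q_m)$ to be a determining sequence of slices of $\tilde C$, Lemma~\ref{daug-not-scd} (in the quantitative form its proof provides), applied to those traces, lets one choose $x_m\in Q_m$ so that $x_*$ stays at a definite distance from the convex hull of the $x_m$ meeting $B_F$; since the remaining $Q_m$ avoid a neighbourhood of $x_*$, it follows that $x_*\notin\overline{\conv}\{x_m:m\in\N\}$, i.e.\ $\tilde C\not\subset\overline{\conv}\{x_m:m\in\N\}$, contradicting that $(Q_m)$ is determining. Hence $\tilde C$ is not SCD.

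The main obstacle is the construction of $D_0$ in the previous paragraph: one must arrange simultaneously that $\overline{\conv}((B_E\oplus 0)\cup D_0)$ really acquires a Daugavet-type face (instead of collapsing to a huskable, hence SCD, set), that some point $x_*$ of it is genuinely a Daugavet point of $\tilde C$ so that every slice getting near $x_*$ cuts $B_F$ in a slice, and that $D_0\subset-A$ together with the geometric identities of the first paragraph are preserved. Once such a $D_0$ is in hand both directions are routine --- aSCD as above, and non-SCD by the reduction to Lemma~\ref{daug-not-scd}.
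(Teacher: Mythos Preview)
Your aSCD argument is essentially the paper's (just with an unnecessary extra layer of shrinking $\varepsilon_n$ to $\varepsilon_n/k$; in fact $S_n\cap B_E=\emptyset$ already forces $\|y_n^*\|\le\sup_A x_n^*-\varepsilon_n$, so $S(\tilde C,x_n^*,\varepsilon_n)=S_n$ on the nose). The real problem is the non-SCD direction, which you have not proved: you explicitly say ``the main obstacle is the construction of $D_0$'' and then describe only a wish-list of properties for a ``Daugavet-fractal'' set, without building it or verifying that such a set exists.

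More importantly, your diagnosis that ``naive choices fail'' is wrong, and this is why you are led into an unnecessary and unfinished construction. You argue that for a tame convex $D_0\subset-A$ the only non-strictly-convex boundary piece of $\tilde C$ could sit at level $t=0$ and would be covered by huskable points of $A$. But the lower boundary of $\tilde C=\overline{\conv}(A\cup D_0)$ is governed by $\overline{\conv}((B_E\oplus 0)\cup D_0)$, not by $-A$; nothing forces its flat pieces to occur only at $t=0$. The paper simply takes
\[
D_0=\delta B_E\oplus(-\alpha),\qquad \alpha=\tfrac{1}{2\sqrt3},\ \ \delta>0\ \text{small enough that }\delta B_E\oplus\alpha\subset A,
\]
i.e.\ $\tilde C=A\cup(-C)$ with $C=\cconv\bigl(B_E\cup(\delta B_E\oplus\alpha)\bigr)$. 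This $D_0$ is as tame and convex as possible, yet it plants a scaled copy of the Daugavet ball $B_E$ as the bottom face of $\tilde C$ at level $t=-\alpha$, far from $t=0$; so $\tilde C$ is \emph{not} huskable there. The non-SCD conclusion then drops out of Lemma~\ref{lem-slice-hyperplane++}: since $\tilde C=\cconv\bigl(A\cup(\delta B_E\oplus(-\alpha))\bigr)$, if $\tilde C$ were SCD then by Lemma~\ref{Prop:SCDconvexhull} so would be $A\cup(\delta B_E\oplus(-\alpha))$, and since $\delta B_E\oplus(-\alpha)$ lies in the hyperplane $\{t=-\alpha\}$ with $A$ at positive distance on one side, Lemma~\ref{lem-slice-hyperplane++} would make $\delta B_E\oplus(-\alpha)$ SCD --- impossible, as it is a shift of a scaled $B_E$. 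No fractal set, no embedded $B_F$, and no bespoke Daugavet-point argument are needed.
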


\begin{proof}
As before, we reduce the situation to the separable case, consider a
1-codimensional subspace $E \subset X$, and write the whole space as
the direct sum   $X = E \oplus \mathbb{R}e_0$. Let $A$  be the same
set as before: 
$$
A  = \{ x \oplus t \in X \colon \| x\|_{t}^{2} + 3t^{2} \le 1, \ t \ge 0 \}.
$$
 Let $\alpha := 1/(2 \sqrt{3})$. Select  $\delta > 0$ small enough
 so that $ \delta B_E \oplus \alpha \subset A$ and denote $C =
 \cconv\left(B_E \cup ( \delta B_E  \oplus\alpha)\right)$. 
 We are going to consider the set
\[ 
\tilde C:= A \cup (-C). 
\]
\begin{itemize}
\item $\tilde C$ is convex and closed: $A$ and $-C$ are convex and
  closed sets, and it is easy to deduce that so is  $\tilde C$ using
  that $\tilde C \subset B_{E} \oplus \mathbb{R}$ and $A \cap (-C) =
  B_{E}$. 
For the same reason, 
\begin{equation} \label{eq-tildaC=}
\tilde C = \cconv(A \cup (- (\delta B_E  \oplus\alpha))).
\end{equation}
\item $\tilde C$ is aSCD: We know by
  Proposition~\ref{prop:counterexample} that $A$ is SCD. If
  $(S_{n})_{n \in \N}$ is a determining sequence of slices for $A$,
  then by Proposition~\ref{Prop:counterexample2} we can also assume
  that $S_{n} = S(A, y_{n}^{\ast} \oplus \lambda_{n},
  \varepsilon_{n})$ with $\lambda_{n} > 0$ and $S_{n} \cap B_{E} =
  \emptyset$. This yields that  $(S_{n})_{n \in \N}$ is actually a
  sequence of slices of $\tilde C$. It moreover satisfies the
  condition of aSCD for $\tilde C$: given $x_{n} \in S_{n}$ we have
  that $A \subset \overline{\conv}{\{ x_{n}\colon n \in \N\}}$ by the
  choice of the sequence, so 
\[ 
A \cup (- C) \subset A \cup (-A) \subset \overline{\aconv}{\{ x_{n}
  \colon n \in \N \}}. 
\]
\item $\tilde C$ is not SCD: Assume that $\tilde C$ is SCD. Then from
  \eqref{eq-tildaC=} we deduce that  $A \cup (- (\delta B_E
  \oplus\alpha) )$ is SCD. A direct application of
  Lemma~\ref{lem-slice-hyperplane++} gives us that $\delta B_E
  \oplus\alpha$   is SCD, which is impossible, because this set is 
  obtained from the non-SCD set $\delta B_E$ by a shift.  
\end{itemize} 
\end{proof}

\subsection{The case of  balanced sets}

Since both the definition of a balanced set and of an aSCD set depend
on the scalar field, in this subsection we address both the cases of
real  and complex scalars. 

 Let $U \subset X$  be a convex closed bounded balanced set, $x^* \in
 X^*$ be a non-zero functional, $\eps > 0$. Denote $S^b(U, x^*, \eps)
 $ the corresponding \emph{balanced slice} of $U$: 
 \[ 
 S^b(U, x^*, \eps)  = \Bigl\{ x \in U \colon  |x^{\ast}(x)| > \sup_{a
 \in U}  |x^{\ast}(a)| - \varepsilon \Bigr\}. 
 \]
Also, let us call a sequence of sets $V_n \subset U$, $n\in\N$,
\emph{balanced determining} for $U$ if for each $B \subset X$ that
intersects all the $V_{n}$,  $n \in \N$,  it holds that  $U \subset
\overline{\aconv}{(B)}$. 

The following proposition is completely analogous to Lemma~\ref{len-H-B-scd}.

\begin{Lemm}  \label{lem-H-B-absscd} 
 Let $U \subset X$  be a convex closed bounded balanced set. Then, the
 following conditions on a sequence $\{V_n: n\in\N\}$ of non-empty
 subsets of $U$ are equivalent: 
\begin{enumerate}
\item[(i)] 
$\{V_n\,:\,n\in\N\}$ is balanced determining for $U$.
\item[(ii)]  
Every balanced slice of $U$ contains one of the $V_n$.
\end{enumerate}
\end{Lemm}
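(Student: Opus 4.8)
The plan is to mimic the proof of Lemma~\ref{len-H-B-scd} line by line, replacing the Hahn--Banach separation theorem for convex sets by its balanced version. The key analytic input is the following elementary fact: if $K$ is a convex closed bounded balanced subset of $X$ and $x_0\notin K$, then there is a non-zero $x^*\in X^*$ with $|x^*(x_0)|>\sup_{a\in K}|x^*(a)|$. This is immediate from the usual Hahn--Banach separation (separate $x_0$ from $K$ by some $x^*$ with $x^*(x_0)>\sup_{a\in K}x^*(a)=:r$; since $K$ is balanced, $\sup_{a\in K}|x^*(a)|=r$ as well, because $|x^*(a)|=x^*(\theta a)$ for a suitable unimodular scalar $\theta$ and $\theta a\in K$). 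Equivalently, $\overline{\aconv}(B)$ is exactly the intersection of all sets of the form $\{x:|x^*(x)|\le c\}$ that contain $B$.

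First I would prove (ii) $\Rightarrow$ (i). Assume every balanced slice of $U$ contains one of the $V_n$, and let $B\subset U$ intersect all the $V_n$. Then $B$ meets every balanced slice of $U$. Fix $u\in U$ and suppose toward a contradiction that $u\notin\overline{\aconv}(B)$. By the balanced separation fact above there is $x^*\neq0$ and $c\ge0$ with $|x^*(u)|>c\ge\sup_{b\in B}|x^*(b)|$. Put $M:=\sup_{a\in U}|x^*(a)|\ge|x^*(u)|>c$ and $\eps:=M-c>0$. Then $S^b(U,x^*,\eps)$ is a balanced slice of $U$ (it is non-empty since it contains $u$), but it is disjoint from $B$, because every $b\in B$ satisfies $|x^*(b)|\le c=M-\eps$. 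This contradicts that $B$ meets every balanced slice; hence $u\in\overline{\aconv}(B)$, and since $u$ was arbitrary, $U\subset\overline{\aconv}(B)$.

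Next I would prove (i) $\Rightarrow$ (ii) by contraposition, exactly as in Lemma~\ref{len-H-B-scd}. Suppose some balanced slice $S=S^b(U,x^*,\eps)$ contains none of the $V_n$. Then $U\setminus S$ intersects every $V_n$. Now $U\setminus S\subset\{x\in X:|x^*(x)|\le \sup_{a\in U}|x^*(a)|-\eps\}$, and this latter set is convex, closed, balanced and does not contain all of $U$ (it misses the points of $S$). Hence $\overline{\aconv}(U\setminus S)$ is contained in it and therefore $\overline{\aconv}(U\setminus S)\not\supset U$. Taking $B:=U\setminus S$ exhibits a set intersecting all the $V_n$ whose closed absolutely convex hull fails to contain $U$, so $\{V_n:n\in\N\}$ is not balanced determining. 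This establishes $\lnot$(ii) $\Rightarrow$ $\lnot$(i), i.e. (i) $\Rightarrow$ (ii).

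There is no real obstacle here; the only point requiring a line of care is the observation that for a balanced set $\sup_{a\in U}x^*(a)=\sup_{a\in U}|x^*(a)|$, which is what makes ordinary slices and balanced slices interchangeable in the separation argument and lets the proof of Lemma~\ref{len-H-B-scd} go through verbatim. I would state this once at the start and then reuse it in both implications. The argument is identical in the real and complex cases, the only difference being that "unimodular scalar $\theta$" means $\theta=\pm1$ in the real case and $|\theta|=1$ in the complex case; I would remark this parenthetically rather than splitting into cases.
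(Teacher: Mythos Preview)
Your proof is correct and follows essentially the same route as the paper. The only cosmetic difference is in the direction (ii) $\Rightarrow$ (i): where you separate a point $u\notin\overline{\aconv}(B)$ by hand, the paper observes that $B$ meeting every balanced slice forces $\sup_{b\in B}|x^*(b)|=\sup_{u\in U}|x^*(u)|$ for all $x^*$, i.e.\ $B^\circ=U^\circ$, and then invokes the bipolar theorem---but this is exactly the packaged form of your separation argument.
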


\begin{proof}
(i) $\Rightarrow$ (ii). Assume that some balanced slice $S =  S^b(U, x^{\ast}, \varepsilon)$ of $U$ does not contain any of the
$V_n$. Then $B:=U \setminus S$ intersects all the $V_n$. But $B
\subset  \{ x \in X \colon  |x^{\ast}(x)| \le \sup_{a \in U} \,
|x^{\ast}(a)| - \varepsilon \}$, which is a convex closed balanced
set. Consequently  $\overline{\aconv} B \subset  \{ x \in X \colon
|x^{\ast}(x)| \le \sup_{a \in U} \, |x^{\ast}(a)| - \varepsilon \}$
which means that $\overline{\aconv} B  \not\supset U$, and
consequently $\{V_n\,:\,n\in\N\}$ is not balanced determining. 

(ii) $\Rightarrow$ (i). Let  $B\subset U$ intersect all the
$V_n$.  Then $B$ intersects all the balanced slices of $U$, so for
every  $x^* \in X^*$ we have $\sup_{u \in U} \, |x^{\ast}(u)|  =
\sup_{b \in B} \, |x^{\ast}(b)| $. This means that $B^o = U^o$ and by
the bipolar theorem  
 $U = \overline{\aconv}{(B)}$. 
\end{proof}

\begin{Theo}  \label{theo-aSCD=SCD} 
 Let $U \subset X$  be a convex closed bounded balanced aSCD set, then
 $U$ is SCD.  
\end{Theo}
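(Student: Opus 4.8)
The plan is to work directly with the relation between an ordinary slice $S(U,x^{\ast},\eps)$ and the corresponding balanced slice $S^b(U,x^{\ast},\eps)$, using only that $U$ is balanced (so $-U=U$) and that slices of a convex set are connected.

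First I would record an elementary decomposition. Since $-U=U$, for every non-zero real functional $x^{\ast}$ the quantity $M:=\sup_{u\in U}|x^{\ast}(u)|$ satisfies $\sup_{u\in U}x^{\ast}(u)=\sup_{u\in U}(-x^{\ast})(u)=M$, whence
\[
S^b(U,x^{\ast},\eps)=S(U,x^{\ast},\eps)\cup S(U,-x^{\ast},\eps),
\]
and for $0<\eps<M$ the two slices on the right are disjoint, since a common point $x$ would force $x^{\ast}(x)>M-\eps>0$ and $-x^{\ast}(x)>M-\eps>0$. Note also that $-S(U,x^{\ast},\eps)=S(U,-x^{\ast},\eps)$, so both summands are genuine slices of $U$.

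Next, using the definition of aSCD, fix a sequence of slices $S_{n}=S(U,x_{n}^{\ast},\eps_{n})$ which is balanced determining for $U$; by Lemma~\ref{lem-H-B-absscd} every balanced slice of $U$ then contains one of the $S_{n}$. I claim that the countable family $\{S_{n}\colon n\in\N\}\cup\{-S_{n}\colon n\in\N\}$ is a determining sequence of slices for $U$, which by Lemma~\ref{len-H-B-scd} completes the proof. To check the claim it is enough to show that every ordinary slice $S(U,x^{\ast},\eps)$ contains some $S_{n}$ or some $-S_{n}$. Since a slice with a smaller $\eps$ (same functional) is contained in the slice with the larger one, and the degenerate slice $S(U,0,\eps)=U$ trivially contains $S_{1}$, we may assume $x^{\ast}\neq 0$ and $0<\eps<M$. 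Apply the hypothesis to the balanced slice $S^b(U,x^{\ast},\eps)$: it contains some $S_{n}$. But $S_{n}$ is a non-empty convex, hence connected, set lying in the disjoint union of the two relatively open sets $S(U,x^{\ast},\eps)$ and $S(U,-x^{\ast},\eps)$, so it is contained in one of them. In the first case we are done; in the second, applying $-1$ gives $-S_{n}\subset S(U,x^{\ast},\eps)$, and we are done as well.

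The complex case needs no separate treatment: complex balancedness still yields $-U=U$, which is all that the decomposition of $S^b$ above uses, the balanced slices in Lemma~\ref{lem-H-B-absscd} are defined through real functionals, and the property SCD does not depend on the scalar field. I expect no serious obstacle here; the only points demanding a little care are the reduction to the range $0<\eps<M$ and the connectedness argument pinning each $S_{n}$ into a single half of $S^b(U,x^{\ast},\eps)$.
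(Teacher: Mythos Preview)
Your real-scalar argument is correct and is essentially the paper's own proof: take the aSCD sequence of slices $V_n$, show that $\{\pm V_n\}$ is determining by using Lemma~\ref{lem-H-B-absscd} and the fact that a connected $V_n$ landing in $S^b=S\cup(-S)$ must sit in one half.

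The gap is in your treatment of the complex case. Your claim that ``the balanced slices in Lemma~\ref{lem-H-B-absscd} are defined through real functionals'' is not correct: in the complex setting $x^{\ast}\in X^{\ast}$ is $\mathbb{C}$-linear and $S^b(U,x^{\ast},\eps)=\{x\in U:|x^{\ast}(x)|>M-\eps\}$ uses the complex modulus. Consequently the decomposition $S^b=S(U,x^{\ast},\eps)\cup S(U,-x^{\ast},\eps)$ fails; the image $x^{\ast}(S^b)$ is (the preimage of) an annulus in $\mathbb{C}$, not a union of two half-planes, so the ``two connected components'' picture breaks down. Nor can you bypass this by viewing $X$ over $\mathbb{R}$: complex aSCD only guarantees $U\subset\overline{\aconv}_{\mathbb{C}}(B)$, which is weaker than the real condition $U\subset\overline{\aconv}_{\mathbb{R}}(B)$, and the proof of the implication (i)$\Rightarrow$(ii) in Lemma~\ref{lem-H-B-absscd} needs the sublevel set $\{|x^{\ast}(\cdot)|\le M-\eps\}$ to be balanced in the same sense as the absolute convex hull---which forces complex functionals in the complex case.

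The paper repairs this by enlarging the candidate family from $\{\pm V_n\}$ to $\{r_m V_n\}$ with $\{r_m\}$ dense in $\mathbb{T}$. Given an ordinary slice $S=S(U,\Real x^{\ast},\eps)$, one passes to the complex balanced slice $S^b(U,x^{\ast},\eps/2)$, finds $V_{n_0}\subset S^b$, and then separates the convex image $x^{\ast}(V_{n_0})\subset\mathbb{C}$ from the disk $\{|z|<\alpha-\eps\}$ by a real hyperplane in $\mathbb{C}=\mathbb{R}^2$; this produces $r\in\mathbb{T}$ (perturbed to some $r_{m_0}$) with $r_{m_0}V_{n_0}\subset S$. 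Your connectedness idea is the right instinct, but in the complex case it must be upgraded to this rotation-plus-separation argument.
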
 

\begin{proof}
1.~The real case.
Let $V_n \subset U$, $n\in\N$, form a  balanced determining sequence
of slices for $U$, and let us demonstrate that the $\pm V_n$,
$n\in\N$,  are determining for $U$. According to
Lemma~\ref{len-H-B-scd} we must demonstrate that every  slice $S =
S(U, x^{\ast}, \varepsilon)$ of $U$ contains one of the $ \pm
V_n$. Consider the corresponding balanced slice $S^b =S^b(U, x^*,
\eps) = S \cup (-S)$. Due to Lemma~\ref{lem-H-B-absscd}, there is an
$m \in \N$ such that $V_m \subset S^b$. Since $V_m$ is connected, it
must be contained in one of connected components of  $S^b$, that is
either  $V_m \subset S$, or  $V_m \subset (-S)$. In the first case the
job is done, and in the second case  $-V_m \subset S$, which is also
fine for us.

2.~The complex case. 
Let $V_n \subset U$, $n\in\N$, form a  balanced determining sequence
of slices for $U$, and let $\{r_m\}_{m \in \N}$ be a dense subset of
the unit circle $\mathbb{T}$ of $\C$. We will check now that the $r_m
V_n$, $m,n \in \N$, are determining for $U$. According to
Lemma~\ref{len-H-B-scd} we must demonstrate that every  slice $S =
S(U, \Real x^{\ast}, \varepsilon)$ of $U$ contains one of the $ r_{m}
V_n$. For this, let $\alpha = \sup_{u \in U}{|x^{\ast}(u)|}$ 
(which we assume to be ${>0}$ to avoid a trivial situation) 
and consider the balanced slice 
$S^b = S^b(U, x^*, \eps/2) $. 
Due to Lemma~\ref{lem-H-B-absscd}, there is an 
$n_{0} \in \N$ such that $V_{n_{0}} \subset S^b$. 
This means that 
$$
\{x^*(v)\colon v\in V_{n_0}\} \cap \{z\in\C\colon |z|<\alpha-\eps\}
=\emptyset,
$$
in fact, these convex subsets of $\C=\R^2$ have positive distance and
can hence be strictly separated, by the Hahn-Banach theorem, so that
there is some $r\in \T$ such that 
$$
\inf_{v\in V_{n_0}} \Real r x^*(v)
> \sup_{|z|<\alpha-\eps} \Real r z,
$$
We may assume by a perturbation argument
that $r=r_{m_0}$ for some $m_0$, thus
$$
\Real x^*(r_{m_0}v) > \alpha-\eps \qquad\mbox{for all }v\in V_{n_0},
$$
in other words $r_{m_0}V_{n_0}\subset S$.
\end{proof}

%%%%%%%%%%%%%%%%%%%%%%%%%%%%%%%
%%%%%%%%%%%%%%%%%%%%%%%%%%%%%%%
%%%%%%%%%%%%%%%%%%%%%%%%%%%%%%%
%%%%%%%%%%%%%%%%%%%%%%%%%%%%%%

\section{Open problems}

In this small section we list several questions about SCD sets that we
have been 
%(by now  $\ddot\smile$)  
unable to solve. Some of these questions
were mentioned explicitly in previous papers, some of them appeared
implicitly,  and some of them are motivated by the results of our
paper. 

\begin{enumerate}
\item 
Does every separable Banach space that is not SCD possess the Daugavet
property in some equivalent norm? 
\item 
Does there exist a pair $U_1$, $U_2$ of hereditarily SCD subsets of a
Banach space such that $U_1 + U_2$ is not SCD? 
\item 
Does the relative weak topology on a  closed convex bounded SCD-set $U
\subset X$  always have a countable $\pi$-basis? 
\item 
Is every space with an unconditional basis  SCD?
\item 
Must the union of two hereditarily SCD subsets of a Banach space  be an SCD set?
\end{enumerate}

Concerning the last problem remark that  $\conv(U_1 \cup U_2)$ need
not  be hereditarily SCD when  $U_1$, $U_2$ are hereditarily
SCD. Indeed, if  $U_1$, $U_2$ are the hereditarily SCD sets from
\cite[Corollary~2.2]{SCDsum} whose  Minkowski sum  is not hereditarily
SCD, then $\conv(U_1 \cup U_2) \supset \frac12 (U_1 + U_2)$, so 
$\conv(U_1 \cup U_2)$ is not  hereditarily SCD either.

%%%%%%%%%%%%%%%%%%%%%%%5 

\end{document}